\documentclass[10pt]{amsart}
\usepackage{latexsym}
\usepackage{amsmath}
\usepackage{amssymb}
\usepackage{mathrsfs}
\usepackage{graphicx}
\usepackage{color}
\usepackage{pgfpages}
\usepackage{ifthen}
\usepackage{leftidx,tensor}
\usepackage[T1]{fontenc}
\usepackage[latin1]{inputenc}
\usepackage{mathtools}
\usepackage{comment}
\usepackage{dsfont}
\usepackage[nocompress]{cite}
\usepackage{amsaddr}

\usepackage[shortlabels]{enumitem}
\usepackage{aliascnt}
\usepackage[bookmarks=true,pdfstartview=FitH, pdfborder={0 0 0}, colorlinks=true,citecolor=red, linkcolor=blue]{hyperref}
\usepackage{bbm}
\usepackage{nicefrac}
\usepackage{tikz}
\usepackage{tikz-3dplot}
\usepackage{caption}
\theoremstyle{plain}
\newtheorem{thm}{Theorem}[section]
\newaliascnt{cor}{thm}
\newaliascnt{prop}{thm}
\newaliascnt{lem}{thm}
\newtheorem{cor}[cor]{Corollary}
\newtheorem{prop}[prop]{Proposition}
\newtheorem{lem}[lem]{Lemma}
\aliascntresetthe{cor}
\aliascntresetthe{prop}
\aliascntresetthe{lem} 
\theoremstyle{definition}
\newaliascnt{defn}{thm}
\newaliascnt{asu}{thm}
\newaliascnt{con}{thm}
\aliascntresetthe{defn}
\aliascntresetthe{asu}
\aliascntresetthe{con}
\newcounter{stp}
\newcounter{stpi}
\newcounter{stpci}
\newcounter{stpiii}

\theoremstyle{thm}
\newaliascnt{rem}{thm}
\newaliascnt{exa}{thm}
\newaliascnt{masu}{thm}
\newaliascnt{nota}{thm}
\newaliascnt{sett}{thm}
\newtheorem{rem}[rem]{Remark}
\newtheorem{exa}[exa]{Example}

\aliascntresetthe{rem}
\aliascntresetthe{exa}
\aliascntresetthe{masu}
\aliascntresetthe{nota}
\aliascntresetthe{sett}
\newtheorem{theoremA}{Theorem}

\numberwithin{equation}{section}

\setlist[enumerate]{font = \normalfont}

\newcommand {\N}	{\mathbb{N}}
\newcommand {\Z}	{\mathbb{Z}}

\newcommand {\R}	{\mathbb{R}}

\newcommand {\T}	{\mathbb{T}}

\renewcommand{\d}{\, \mathrm{d}}

\DeclareMathOperator{\divH}{div_{\H}}

\newcommand{\sE}{\mathcal{E}}

\renewcommand{\i}{\mathrm{i}}

\renewcommand{\H}{\mathrm{H}}

\newcommand{\sigmabar}{\bar{\sigma}}

\newcommand{\atm}{\mathrm{a}}
\newcommand{\ocn}{\mathrm{o}}

\newcommand{\air}{\mathrm{a}}

\newcommand{\ssatm}[1]{{#1}^{\atm}}
\newcommand{\ssocn}[1]{{#1}^{\ocn}}

	\newcommand{\Omegaatm}{\ssatm{\Omega}}
	\newcommand{\Omegaocn}{\ssocn{\Omega}}

	\newcommand{\Gau}{\Gamma_u^{\ocn}}
	\newcommand{\Gab}{\Gamma_b^{\ocn}}

	\newcommand{\dk}[1]{\partial_{#1}}
	\newcommand{\dt}{\dk{t}} 
	\newcommand{\dz}{\dk{z}} 
	
	\renewcommand{\phi}{\varphi}
	\newcommand{\vatm}{\ssatm{v}}
	\newcommand{\vocn}{v^{\ocn}}
	\newcommand{\wocn}{w^{\ocn}}

	\newcommand{\uocn}{u^{\ocn}}

	\newcommand{\hvair}{\hat{v}^{\air}}
	\newcommand{\hvocn}{\hat{v}^{\ocn}}
	
	\renewcommand{\bar}[1]{\overline{#1}}
	\newcommand{\vbar}{\bar{v}}
	
	\newcommand{\vtilde}{\tilde{v}}

	\newcommand{\nablaH}{\nabla_{\H}}
	\newcommand{\DeltaH}{\Delta_{\H}}

	\newcommand{\rC}{\mathrm{C}}
	\newcommand{\rL}{\mathrm{L}}

	\newcommand{\rH}{\H}
	
	\newcommand{\rA}{\mathrm{A}}
	
	\newcommand{\rE}{\mathrm{E}}

	\newcommand{\uair}{u^{\air}}
	\newcommand{\vair}{v^{\air}}

	\newcommand{\omegaair}{\omega^{\air}}

	\newcommand{\rhoair}{\rho^{\air}}
	\newcommand{\rhoocn}{\rho^{\ocn}}

	\newcommand{\vairbar}{\bar{v}^{\air}}
	\newcommand{\vocnbar}{\bar{v}^{\ocn}}

	\newcommand{\Omegaair}{\Omega^{\air}}
	\newcommand{\rX}{\mathrm X}
	\newcommand{\Gammaairu}{\Gamma_u^{\atm}}
	\newcommand{\Gammaairb}{\Gamma_b^{\atm}}
	
	\newcommand{\wair}{w^\atm}

	\renewcommand{\div}{\mathrm{div} \,}
	
	\title[Long-term behaviour of the CAO-system]{Long-term behaviour of the CAO-system: Optimal polynomial $\rH^1$-convergence to degenerate equilibria}
	\author{Tim Binz}
	\address{Technical University of Darmstadt\\
		        Department of Mathematics\\
		        Schlossgartenstr. 7\\
		        64289 Darmstadt\\
		        Germany}
	\email{binz@mathematik.tu-darmstadt.de}
	
	\subjclass{35Q86, 35Q35, 76D03, 35K55}%
	\keywords{CAO-problem, primitive equations, two-phase problems, nonlinear interface boundary conditions, nonlinear dynamics, long-term behaviour, center manifold}
\begin{document}

	\maketitle	
	
\begin{abstract}
This article is concerned with a definitive analysis of its long-term dynamics of the CAO-system. We establish the convergence of global strong solutions to constant equilibria in the full $\rH^1$-topology in the non-perturbative regime.
Remarkably, we demonstrate that the CAO-system exhibits a polynomial rate of decay and we determine the optimal rate. This phenomenon stands in stark contrast to the exponential convergence typically expected for uniformly parabolic systems on bounded domains. This algebraic slowing is shown to be a purely nonlinear effect, driven by the emergence of a slow manifold (center manifold) induced by nonlinear coupling conditions. To the best of our knowledge, this provides the first instance of a global polynomial decay result for a non-gradient, non-local dissipative system on a bounded domain.
\end{abstract}

\section{Introduction}
\label{sec:intro}

The \emph{primitive equations} constitute the standard model for large-scale oceanic and atmospheric dynamics, see \cite{Ped:87}.
Their origins trace back to the pioneering work of Richardson in 1922 \cite{Ric:22}.
They are derived from the \emph{Navier-Stokes equations} under the assumption of \emph{hydrostatic balance} in the pressure term.
The two-phase flow arising from the primitive equations is commonly referred to as the \emph{coupled atmosphere-ocean system} (CAO system), introduced by Lions, Temam, and Wang in \cite{LTW:93,LTW:95}.
For the velocity fields $\uair = (\vair,\wair)$ and $\uocn = (\vocn,\wocn)$, the system takes the form
\begin{equation}
	\left\{
	\begin{aligned} \dt \vair+ \vair \cdot \nablaH \vair + \wair \partial_p \vair  + \nablaH \Phi_s &= \Delta^{\atm} \vair, &&\text{ on } \Omegaair \times (0,T),  \\
		\div \uair &= 0, &&\text{ on } \Omegaair \times (0,T),\\
		\dt \vocn  + \vocn \cdot \nablaH \vocn + \wocn \dz \vocn + \nablaH \pi_s &= \Delta \vocn, &&\text{ on } \Omegaocn \times (0,T), \\
		\div \uocn &= 0, &&\text{ on } \Omegaocn \times (0,T), \\
	\end{aligned}
	\right. 
	\label{eq:CAO}
	\tag{CAO}
\end{equation}
where $\Delta^{\air} := \DeltaH + \partial_p (p^2 \partial_p )$. As a large-scale approximation of a two-phase Navier-Stokes system, the interface is effectively flat. Accordingly, we consider the following geometry: the atmosphere and the ocean are modeled as periodic layers $\Omegaair = \T^2 \times (p_a,p_s)$ with $0 < p_a < p_s$ and $\Omegaocn = \T^2 \times (-h,0)$ with $h > 0$. Their respective upper and lower boundaries are given by $\Gammaairb := \T^2 \times \{ p_a \}$, $\Gammaairu := \T^2 \times \{ p_s \}$, $\Gab := \T^2 \times \{ -h \}$, and $\Gau := \T^2 \times \{ 0 \}$. The fact that the interface is fixed determines the boundary condition for the vertical velocities
\begin{equation*}
		w^\air|_{\Gammaairb \cup \Gammaairu} = 0 \quad \text{ and } \quad \wocn|_{\Gab \cup \Gau} = 0 .
\end{equation*}

The central aspect of the two-phase problem is the coupling at the interface.\footnote{To be precise, in the CAO-system the two layers are formulated in different coordinate systems. The coupling occurs between the upper boundary of the atmospheric layer and the upper boundary of the oceanic layer. This reflects the fact that the atmospheric coordinates are based on the air pressure, which is proportional to height with a negative constant. In principle, one could perform a change of variables to obtain the coupling at a common interface; however, we retain the standard formulation to remain consistent with the literature. For a detailed discussion of the CAO-system, we refer to \cite{LTW:93,LTW:95} and the introduction of \cite{BBHZ:25}.} In the CAO-system, this coupling is described by the nonlinear \emph{wind-driven boundary conditions} or \emph{drag conditions}
\begin{equation} 
	\begin{aligned} 
		(\partial_p \vatm)|_{\Gammaairb} = 0 \quad &\text{ and } \quad (\partial_p \vatm)|_{\Gammaairu} = -p_s^{-1} \cdot |\vatm|_{\Gammaairu} -\vocn|_{\Gab}|\cdot(\vatm|_{\Gammaairu} -\vocn|_{\Gab} ) , \\ 
		(\partial_z \vocn)|_{\Gab} = 0 \quad &\text{ and } \quad (\partial_z \vocn)|_{\Gau} = p_s \cdot  |\vatm|_{\Gammaairu} -\vocn|_{\Gab}|\cdot  (\vatm|_{\Gammaairu} -\vocn|_{\Gab}) .
	\end{aligned} 
	\label{eq:wind bc}
\end{equation}

Lions, Temam and Wang proved in their pioneering work the existence of a weak solution to the primitive equations \cite{LTW:92a,LTW:92b} and to the CAO-system \cite{LTW:93,LTW:95}; its uniqueness
remains an open problem for both systems until today.

\medskip 

For the primitive equations with homogeneous Dirichlet and Neumann boundary conditions, global strong well-posedness in three space dimensions has been established for arbitrarily large initial data in $\rH^1$
in the celebrated work of Cao and Titi \cite{CT:07}. Related results can be found in \cite{KZ:07,Kob:07,HK:16,GGHHK:20,CLT:14a,CLT:15,GKVZ:14}. 
Recently, the global strong well-posedness result for the CAO-system was proved in \cite{BBHZ:25}.

\medskip 

Observing that two-phase flows with  linear coupling conditions are rather  well understood, we note that this is not the case for fully nonlinear interface conditions as formulated in CAO.

To avoid the difficulty of fully nonlinear interface conditions, they are often  replaced by simplified linear conditions as $\partial_z \vocn = |\vatm| \cdot \vatm$ for a given function $\vatm$.
As pointed out in \cite{LTW:93} and \cite{BS:01}, this modification is unjustified and generates unrealistic solutions from a physical point of view.
Moreover, this simplification overlooks the so-called \emph{eddy-killing effect}, a critical feedback mechanism where surface currents modulate wind stress. 
Implementing full mechanical air-sea coupling using relative velocities is a superior approach because it allows the stress to extract kinetic energy from eddies, thereby damping their variance and stabilizing western boundary currents, see for instance \cite{Ren:19}. 
Consequently, the simplified approach suppresses a vital pathway by which the atmosphere regulates mesoscale ocean energetics.

In contrast to the existing results in the literature, our aim is not to  simplify the interface condition,  but to consider the original CAO-system subject  to fully 
nonlinear interface conditions.
In this article, we show that the fully nonlinear interface conditions induce a slowing-down effect in the dynamics that is not captured by their simplified counterparts. This emphasizes the importance of the eddy-killing effect in the CAO-model. 

\smallskip 

For a discussion regarding the simplified linear boundary condition, we refer e.\ g.\ to the work of Bresch and Simon  \cite{BS:01}. For related results about the Navier-Stokes equations with wind-driven boundary conditions, we 
refer to the work of Desjardins and Grenier \cite{DG:00}, Bresch and Simon \cite{BS:01}, Bresch, Guill\'en-Gonz\'alez, Masmoudi and Rodr\'iguez-Bellido \cite{BGMR:03} and Dalibard and Saint-Raymond \cite{DS:09}. 
The primitive equations with stochastic wind-driven boundary conditions were studied in \cite{BHHS:24}, and with deterministic wind-driven boundary conditions in \cite{Bin:25}. In the latter work, a connection to the Ekman layer \cite{Ekm:05} is established, which appears natural in view of the fact that boundary layers typically arise due to rotation and wind stress.
For further background on boundary layers, we refer for example to we refer e.\ g.\ to the work of Dalibard and G\'erard-Varet \cite{DGV:17} or Dalibard and 
Saint-Raymond \cite{DS:09}. 
As explained in \cite{LTW:95}, the coupling conditions \eqref{eq:wind bc} incorporate such boundary-layer effects.
For further background on the CAO-system and primitive equations we refer to \cite{Ped:87,LTW:92a,LTW:92b,LTW:93,LTW:95,BBHZ:25,CDGG:06} and the references therein. 

\medskip 

\subsection{Main results}

\

The starting point of our investigation is the global well-posedness result 
for the CAO-system, recently been proven by the author together with F.~Brandt, M.~Hieber, and T.~Zöchling in \cite{BBHZ:25}. 
With the global well-posedness at hand the natural question arises how the CAO-system behaves in the long term $t \to + \infty$. 
The present work is primarily concerned with long-term behaviour of the CAO-system. 
This will yield to a complete description of the dynamics of the CAO-system. 
In the sequel we restrict ourself to the Hilbert space case $p = q = 2$ and recall \cite[Theorem 3.1]{BBHZ:25} in that case:

\begin{theoremA}[Global Strong Well-Posedness of the CAO-system]\label{thm:global} \mbox{} \\
	For every initial datum $(\vair_0, \vocn_0) \in \rH^1_{\sigmabar}(\Omegaatm) \times 
	\rH^1_{\sigmabar}(\Omegaocn)$ there is a unique, strong global solution $(\vair,w^a,\Phi)$, $(\vocn, \wocn, \pi)$\footnote{For the definitions of the spaces $\rH^1_{\sigmabar}(\Omegaatm)$ and $\rH^1_{\sigmabar}(\Omegaocn)$, as well as the manner in which $w^a$, $w^o$, $\Phi$, and $\pi$ are determined by $(\vair,\vocn)$, we refer the reader to \autoref{sec:prelim}.} to \eqref{eq:CAO}, subject to \eqref{eq:wind bc}, satisfying
	\begin{equation*}
		\begin{aligned}
			\vair &\in 
			\rH^{1}(0,T;\rL^2(\Omegaair;\R^2)) \cap \rL^2(0,T;\rH^{2}(\Omegaair;\R^2)) \cap \mathrm{C}(\R_+;\rH^1_{\sigmabar}(\Omegaatm)), \\
			\vocn &\in 
			\rH^{1}(0,T;\rL^2(\Omegaocn;\R^2)) \cap \rL^2(0,T;\rH^{2}(\Omegaocn;\R^2)) \cap \mathrm{C}(\R_+;\rH^1_{\sigmabar}(\Omegaocn)),
		\end{aligned}
	\end{equation*}
	for all $T > 0$. 
\end{theoremA}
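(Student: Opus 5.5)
The plan is the standard two-stage scheme for primitive-type equations: local strong well-posedness by maximal $\rL^2$-regularity and a fixed point, then global a priori $\rH^1$ bounds in the style of Cao--Titi that exclude blow-up.

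\textbf{Step 1: reformulation.} Eliminate $w^\air$, $\wocn$ through the divergence condition and the vanishing of the vertical velocities at the boundaries. Remove the surface pressures $\Phi_s$, $\pi_s$ with the hydrostatic Helmholtz projection onto vertically averaged solenoidal fields. This gives an abstract evolution equation
\[
\dt V + A V = F(V), \qquad V=(\vair,\vocn),
\]
where $A$ is the hydrostatic Stokes operator with homogeneous Neumann conditions. The nonlinear drag conditions \eqref{eq:wind bc} enter as inhomogeneous boundary data $g(V)=|\vatm-\vocn|(\vatm-\vocn)$ on the interface traces. Since $r\mapsto |r|r$ is $\mathrm C^1$ with quadratic growth, trace theory combined with the Sobolev embedding $\rH^{1/2}(\T^2)\hookrightarrow \rL^4$ shows that $g$ is locally Lipschitz from
\[
\rH^1(0,T;\rL^2)\cap \rL^2(0,T;\rH^2)
\]
into the trace space
\[
\rH^{1/4}(0,T;\rL^2(\T^2))\cap \rL^2(0,T;\rH^{1/2}(\T^2)).
\]
Maximal regularity for the inhomogeneous Neumann problem, which comes from the diagonal structure of the linearised system, together with Banach's fixed point theorem then yields a unique local strong solution for all $\rH^1$ data. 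This solution has a blow-up criterion: if $T^*<\infty$, then $\|V(t)\|_{\rH^1}$ is unbounded as $t\to T^*$.

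\textbf{Step 2: global a priori bounds.}

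1. Test both equations with $\vair$ and $\vocn$. The weight $p_s$ in \eqref{eq:wind bc} is chosen precisely so that the interface terms combine to $-\int |\vatm-\vocn|^3\le 0$. This gives an $\rL^2$ energy inequality with dissipation.

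2. Split each velocity into its barotropic part $\vbar$ (vertical average) and its baroclinic part $\vtilde$.

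3. Derive an $\rL^4$ estimate for $\vtilde$ by testing the $\vtilde$-equation with $|\vtilde|^2\vtilde$. The boundary terms are cubic in the traces. Control them with the trace interpolation inequality
\[
\|f\|_{\rL^4(\Gamma)}^4\lesssim \|f\|_{\rL^4}^{3}\,\|\nabla f\|_{\rL^2}\ \text{(weighted)},
\]
absorbing the gradient parts into the dissipation $\int|\vtilde|^2|\nabla\vtilde|^2$.

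4. Estimate $\nablaH\vbar$ in $\rL^2$ via the 2D forced Navier--Stokes-type system. The forcing is quadratic in $\vtilde$ and in the boundary fluxes.

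5. Estimate $\partial_z\vocn$ and $\partial_p\vair$ in $\rL^2$.

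6. Conclude with the full $\rH^1$ estimate, using Ladyzhenskaya/Agmon-type anisotropic inequalities and Gronwall.

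\textbf{Main obstacle.} The hardest point is the vertical-derivative estimate and the $\rL^4$ step. Differentiating the equation in $z$ (respectively $p$) produces boundary terms of the form
\[
\int_{\Gamma}\partial_t g(V)\cdot \partial_z v ,
\]
which involve time derivatives of the nonlinear drag. My first attempt is to rewrite these as
\[
\tfrac{d}{dt}\int_\Gamma G(V)\quad \text{minus controllable terms,}
\]
where $G$ is the potential of the drag. Because the coupling is a gradient of $|r|^3/3$ with a sign, this potential should be a \emph{nonnegative} Lyapunov-type contribution that can be added to the energy. The remaining terms I will bound by $\rH^{1/2}$ trace norms and absorb.

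Once all the bounds hold uniformly on $[0,T]$ for every $T$, the blow-up criterion gives $T^*=\infty$. Continuity of $V$ in $\rH^1_{\sigmabar}$ follows from the trace embedding of the maximal-regularity class, and uniqueness comes from the local Lipschitz estimate.
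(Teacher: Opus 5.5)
Your overall route is the one behind the statement. The paper does not reprove it. It quotes \cite[Theorem 3.1]{BBHZ:25} with $p=q=2$ and gets continuity in $\rH^1_{\sigmabar}$ from the embedding of $\rH^1(0,T;\rL^2)\cap\rL^2(0,T;\rH^2)$ into $\rC([0,T];\rH^1)$. The a priori bound recalled in the proof of \autoref{prop:linftyl2} has your Cao--Titi structure, with the drag potential entering as $\tfrac13\dt\int_{\T^2}p_s|[v]|^3$.

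\textbf{The gap is in the $\rL^4$ step.} On the interface $\dz\vtilde^{\ocn}=p_s|[v]|[v]$ and $p^2\partial_p\vtilde^{\atm}=-p_s|[v]|[v]$. Testing with $|\vtilde|^2\vtilde$ and adding the two layers therefore puts
\[
p_s\int_{\T^2}|[v]|[v]\cdot\bigl(|\vtilde^{\atm}|^2\vtilde^{\atm}-|\vtilde^{\ocn}|^2\vtilde^{\ocn}\bigr)
\]
on the left-hand side. This is a quintic trace term, not a cubic one. The $\vtilde$-equations also carry a vertically constant forcing $\propto p_s|[v]|[v]$ inherited from the $\vbar$-equation, which you do not mention.

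Trace interpolation cannot absorb the quintic term. For profiles $\lambda\phi(\lambda\,\cdot)$ concentrating at the interface it is comparable to the dissipation $\int|\vtilde|^2|\nabla\vtilde|^2$ times a scale-invariant quantity. That quantity is not small for large data, and energy-level coefficients only produce lower-order contributions. Your argument is the one for a \emph{prescribed} wind stress, where this term is cubic.

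What works is its sign. Split $[v]=(\vbar^{\atm}-\vbar^{\ocn})+(\vtilde^{\atm}-\vtilde^{\ocn})$. The part with $[v]$ replaced by $\vtilde^{\atm}-\vtilde^{\ocn}$ satisfies $\gtrsim\int_{\T^2}|\vtilde^{\atm}-\vtilde^{\ocn}|^3(|\vtilde^{\atm}|^2+|\vtilde^{\ocn}|^2)\ge 0$, because $r\mapsto|r|^2r$ is monotone. The remainder contains the barotropic jump. After Young's inequality against the good term it leaves contributions like $\|\vbar\|_{\rH^1}^3$. So the $\rL^4$ bound for $\vtilde$ and the $\rH^1$ bound for $\vbar$ must be closed together in one Gronwall argument, not successively.

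\textbf{Two further corrections.} First, the continuation criterion. For $p=q=2$ the space $\rH^1$ is critical for the hydrostatic nonlinearity. Indeed $\|w\,\dz v\|_{\rL^2}$ costs $\|v\|_{\rH^{3/2}}^2$, so
\[
\|F(V)\|_{\rL^2(0,T;\rL^2)}\lesssim\|V\|_{\rL^\infty(0,T;\rH^1)}\|V\|_{\rL^2(0,T;\rH^2)}
\]
with no positive power of $T$. Hence the local existence time is not controlled by $\|V_0\|_{\rH^1}$, and boundedness of $\|V(t)\|_{\rH^1}$ alone does not let you continue. Use instead the $\rL^2(0,T;\rH^2)$ bound that your gradient estimate yields anyway (cf.\ $\int_0^t\|\Delta v\|^2\le B(t)$ in \autoref{prop:linftyl2}). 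It places $V$ in the maximal-regularity class up to $T^*$, so $V(T^*)$ exists in $\rH^1$ and you can restart.

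Second, the interface term carrying the potential is $\int_{\T^2}\dt v\cdot\partial_n v$: the time derivative falls on the traces, not on $g$. In the paper it arises when testing with $-\Delta v$ (\autoref{lem:differential inequality dissipation}), giving $p_s\int_{\T^2}|[v]|[v]\cdot\dt[v]=\tfrac{p_s}{3}\dt\|[v]\|_{\rL^3}^3$. Your $\int_\Gamma\dt g(V)\cdot\dz v=\tfrac12\dt\int_\Gamma|g|^2$ would give a quartic potential instead.
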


Note that this statement implies that the CAO-system yields to a global semi-flow on the phase space $\rH^1_{\sigmabar}(\Omegaatm) \times 
\rH^1_{\sigmabar}(\Omegaocn)$.
With the global well-posedness at hand we can now aim for a deeper understanding of the dynamical behaviour of the CAO-system. 
Traditionally, the first step to do so is to identify conserved quantities and (strict) Lyapunov functionals of the system, the second step is to use them to identify the equilibria. 

\smallskip 

Two natural (physical) quantities of the CAO-system to consider are the \emph{total energy} 
\begin{equation*}
	\rE(t):= 
	\frac{1}{2}\int_{\Omegaatm} |\vair(t)|^2 + \frac{1}{2}\int_{\Omegaocn} |\vocn(t)|^2 
\end{equation*}
and the \emph{total momentum}
\begin{equation*}
	\mathrm{P}(t)
	:= \int_{\Omegaair} \vair(t) 
	+ \int_{\Omegaocn} \vocn(t) .
\end{equation*} 
Indeed, the CAO-system enjoys the following properties.    

\begin{theoremA}[Energy, Momentum and Equilibria]\label{thm:equilibria} \mbox{} \\
	The total energy $\rE$
	is a strict Lyapunov functional of \eqref{eq:CAO}, the total momentum $P$ 
	is a conserved quantity. 
	The critical points of the energy functional form the 
	set of equilibria. They are given by $\mathcal{E} = \{ (c,c) \colon c \in \R^2 \}$ and no equilibrium is normally stable. 
	More precisely, the CAO-system admits a four-dimensional slow manifold with a non-trivial reduced dynamics.  
\end{theoremA}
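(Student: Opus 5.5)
We prove the three assertions of \autoref{thm:equilibria} in turn: the energy identity, momentum conservation, and the structure of the linearization at $\mathcal E$.

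\emph{Energy.} We test the atmospheric equation with $\vair$ and the oceanic equation with $\vocn$, and integrate by parts. Since $\div \uair = \div \uocn = 0$ and $\wair$, $\wocn$ vanish on the horizontal boundaries, the transport terms vanish. The pressure terms vanish as well: $\Phi_s$ and $\pi_s$ depend only on the horizontal variable, and the vertical average of the horizontal velocity is divergence free (this is the $\sigmabar$-condition). For the diffusion term in the atmosphere, integration by parts gives
\begin{equation*}
-\int_{\Omegaair}|\nablaH\vair|^2 + p^2|\partial_p\vair|^2 \;+\; p_s^2\int_{\Gammaairu}\partial_p\vair\cdot\vair ,
\end{equation*}
because $\partial_p\vair=0$ on $\Gammaairb$. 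Inserting \eqref{eq:wind bc} and writing $\delta := \vair|_{\Gammaairu}-\vocn|_{\Gab}$, the two boundary contributions combine to
\begin{equation*}
-p_s\int |\delta|\,\delta\cdot\vair + p_s\int|\delta|\,\delta\cdot\vocn = -p_s\int_{\T^2}|\delta|^3 .
\end{equation*}
This is exactly where the weights $p_s^{-1}$ and $p_s$ in \eqref{eq:wind bc} are needed. The result is
\begin{equation*}
\frac{d}{dt}\rE = -\|\nablaH\vair\|^2 - \|p\,\partial_p\vair\|^2 - \|\nabla\vocn\|^2 - p_s\|\delta\|_{\rL^3(\T^2)}^3 \le 0 .
\end{equation*}
The regularity provided by \autoref{thm:global} justifies these computations.

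\emph{Momentum.} We integrate each equation over its domain. The transport terms are divergences of $\vair\otimes\uair$ and $\vocn\otimes\uocn$, so they vanish by periodicity and $w=0$ on the horizontal boundaries. The pressure gradients integrate to zero on $\T^2$. The viscous terms leave only the boundary fluxes $p_s^2\int\partial_p\vair = -p_s\int|\delta|\delta$ and $\int\partial_z\vocn = p_s\int|\delta|\delta$, which cancel. Hence $\mathrm P$ is conserved.

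\emph{Strictness and equilibria.} If $\frac{d}{dt}\rE=0$, then $\vocn$ is constant in space, $\vair$ is constant in the horizontal variables and in $p$ (since $p\ge p_a>0$), and $\delta=0$, so both constants coincide. Conversely, every pair $(c,c)$ is a stationary solution with $w=0$ and constant pressure. Thus $\mathcal E=\{(c,c)\}$, and these are exactly the critical points of $\rE$ restricted to the dissipation-free set. The dissipation vanishes along a nonconstant solution only at equilibria, which gives strictness.

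\emph{Linearization and slow manifold.} We linearize at $(c,c)$. The drag term $|\delta|\delta$ is $C^1$ with vanishing derivative at $\delta=0$, so the linearized boundary conditions are homogeneous Neumann conditions on both layers. The linearization therefore decouples into two hydrostatic Stokes operators with Neumann conditions plus the transport term $c\cdot\nablaH$. Its kernel consists of constant fields in each layer, which is four-dimensional: $(a,b)\in\R^2\times\R^2$. Conservation of $\mathrm P$ reduces the effective kernel by two dimensions, but the tangent space of $\mathcal E$ is only two-dimensional. So the kernel is strictly larger than the tangent space of $\mathcal E$ and the equilibria are not normally stable. By a center manifold theorem for quasilinear parabolic problems (Pr\"uss--Simonett), there is a four-dimensional center (slow) manifold. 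To identify the reduced dynamics, we restrict to spatially constant states $(a(t),b(t))$, which are exact solutions. There the system reduces to the ODE
\begin{equation*}
\dot a\,(p_s-p_a) = -p_s^{-1}\cdot p_s^2\cdot\frac{|a-b|(a-b)}{\ldots},\qquad h\,\dot b = p_s|a-b|(a-b),
\end{equation*}
with the normalizing constants fixed by integrating over each layer. Its relative variable obeys $\dot\delta=-\kappa|\delta|\delta$, which is nontrivial with decay $\delta\sim t^{-1}$.

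We expect the main obstacle to be checking the nonlinear boundary operator's differentiability and verifying the hypotheses of the center manifold theorem for this coupled two-phase system. For this we would use the maximal $\rL^2$-regularity framework of \cite{BBHZ:25}.
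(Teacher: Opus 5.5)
The energy identity, momentum conservation, the identification of $\mathcal E$ through vanishing dissipation, and the failure of normal stability via $\dim\ker\bar{\rA}_{c}=4>2=\dim\mathcal E$ follow the paper's route. The paper tests the stationary system directly, which is the same computation. Your sentence on the ``effective kernel'' mixes two settings and, as written, compares $2$ with $2$. The correct comparison is $4$ with $2$ in the full phase space, or $2$ with $0$ on a fibre of fixed momentum.

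The genuine error is in the last step. Spatially constant pairs $(a(t),b(t))$ are \emph{not} exact solutions unless $a=b$. If $\vair\equiv a$, then $\partial_p\vair|_{\Gammaairu}=0$, whereas \eqref{eq:wind bc} demands $\partial_p\vair|_{\Gammaairu}=-p_s^{-1}|a-b|(a-b)\neq0$. If $a=b$, the pair is an equilibrium, so the restricted dynamics is trivial. Hence the center manifold is tangent to, but not equal to, the plane of constant pairs, and your ODE is not its reduced flow; this is why you could not close its right-hand side. What is exact is $\frac{d}{dt}\int_{\Omegaair}\vair=-p_s\int_{\T^2}|[v]|[v]$ and $\frac{d}{dt}\int_{\Omegaocn}\vocn=p_s\int_{\T^2}|[v]|[v]$. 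Replacing the trace jump $[v]$ by the difference of the layer averages is an uncontrolled closure. It is a reasonable leading-order heuristic, and it predicts $|[v]|\sim t^{-1}$, matching the rates proved later, but it is not a proof. Moreover, $\delta\mapsto|\delta|\delta$ is only $C^{1,1}$, so the slaving map is in general only $C^1$ and the reduced vector field cannot be obtained by expanding it either.

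You do not need an explicit reduced equation. As in the paper's remark, non-triviality follows from a dimension count. If the reduced vector field vanished identically, every point of the four-dimensional center manifold would be an equilibrium, contradicting that the equilibria near $(c,c)$ form the two-dimensional set $\mathcal E$.

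What you should add for the ``four-dimensional slow manifold'' claim is that $\sigma(\bar{\rA}_c)\cap\i\R=\{0\}$ with $0$ semisimple; the kernel alone does not determine the center subspace. Pairing an eigenfunction $u$ with itself gives $\mathrm{Re}\,\lambda\,\|u\|^2=-\|\nablaH u^{\atm}\|^2-\|p\,\partial_p u^{\atm}\|^2-\|\nabla u^{\ocn}\|^2$, because $c\cdot\nablaH$ is skew-adjoint under the Neumann conditions. So $\mathrm{Re}\,\lambda=0$ forces $u$ to be constant and $\lambda=0$. The adjoint also annihilates constants, so $\bar{\rA}_c u=k$ with $k$ constant forces $\|k\|^2=\langle u,\bar{\rA}_c^{*}k\rangle=0$, which rules out Jordan chains at $0$.

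If you want an exact witness of slow non-trivial motion, use the $(x,y)$-independent data of \autoref{exa:algebraic decay}. These form a genuinely invariant class on which the system reduces to the one-dimensional toy problem.
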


The fact that the energy is a strict Lyapunov functional shows that the CAO-system is dissipative. The conservation of the total momentum structures the phase space $\rH^1_{\sigmabar}(\Omegaatm) \times 
\rH^1_{\sigmabar}(\Omegaocn)$. This enables us to identify which equilibrium is approached for each initial datum. With this preparation we can now enter the next part of our studies: the convergence to equilibria. 
Despite the presence of the non-trivial dynamics on the slow manifold, the strict Lyapunov functional and point-wise in time a-priori estimates in the phase space $\rH^1_{\sigmabar}(\Omegaatm) \times 
\rH^1_{\sigmabar}(\Omegaocn)$ together with compactness of the orbits (in $\rH^s$ for $s <1$ due to compact Sobolev embeddings) already force the trajectories to converge (in $\rH^s$). They cannot oscillate on the center manifold. On the other hand, it follows by Angenent's parameter trick \cite{Ang:90a,Ang:90b} from the parabolicity of the system that the trajectories are analytic in time. Therefore, they cannot get stuck at an equilibrium in finite time. For more details see \autoref{cor:not finite time} and \autoref{cor:no recurrence}. 

\smallskip 

Our main result is a stronger convergence result: It does not only sharpens the convergence to the equilibria to the full $\rH^1$-norm of the phase-space but also determines the optimal convergence rate. That is where the non-normal stability of the equilibria and the non-trivial dynamics on the slow manifold enters the picture. 
Indeed, if the equilibria would be normally stable, the generalized principle of linearized stability \cite{PSZ:09} or \cite[Theorem 5.2.1]{PS:16} would imply \textit{exponential} convergence to equilibria close to it. Then, the $\rL^\infty_t\rH^2_{\bold{x}}$ a-priori estimates and the fact that the energy is a strict Lyapunov functional can be used to bridge from arbitrary initial data to such near the equilibrium. This strategy was carried out in \cite{Bin:25} for the primitive equations with inhomogeneous boundary conditions leading to exponential decay to equilibria in the $\rH^1$-norm.
The slow manifold might not be strong enough to stop the trajectories but it is able to slow them down from the exponential decay one expects for dissipative systems on bounded domains to a merely \textit{polynomial rate}.
Indeed, we obtain the following result.

\begin{theoremA}[Long-term behaviour of the CAO-system]\label{thm:long-term} \mbox{} \\
The global unique strong solution $(\vair,\vocn)$ from \autoref{thm:global} with initial data $(\vair_0,\vocn_0)$ converges in $\rH^1$ with polynomial rate 
	\begin{align*}
		\| \vair(t) - c \|_{\rL^2(\Omegaair)}
		+
		\| \vocn(t) - c \|_{\rL^2(\Omegaocn)}
		\leq C \cdot (1+t)^{-1} , \\
		\| \nabla^a \vair(t) \|_{\rL^2(\Omegaair)}
		+
		\| \nabla \vocn(t) \|_{\rL^2(\Omegaocn)}
		\leq C \cdot (1+t)^{-{\frac{3}{2}}}
	\end{align*}
	to the equilibrium $(c,c)$ from \autoref{thm:equilibria}, where
	\begin{equation*}
		c := \frac{\int_{\Omegaair} \vair_0 + \int_{\Omegaocn} \vocn_0}{|\Omegaair|+|\Omegaocn|} .
	\end{equation*}
	The convergence rates are optimal.  
	The convergence rates of the vertical velocities imply that the vertical velocities, the geopotential, and the oceanic pressure decay accordingly as
	\begin{align*}
		\| \wair(t) \|_{\rL^2(\Omegaair)}
		+
		\| \wocn(t) \|_{\rL^2(\Omegaocn)}
		\leq C \cdot (1+t)^{-{\frac{3}{2}}} , \\
		\| \nablaH \Phi(t) \|_{\rL^2(\Omegaair)}
		+
		\| \nablaH \pi(t) \|_{\rL^2(\Omegaocn)}
		\leq C \cdot (1+t)^{-2} .
	\end{align*}
\end{theoremA}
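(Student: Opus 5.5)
First subtract the equilibrium. Momentum is conserved by \autoref{thm:equilibria}, and the system is Galilean invariant in the horizontal direction because the drag only depends on the relative velocity $\vair|_{\Gammaairu}-\vocn|_{\Gab}$. So after shifting to the frame moving with velocity $c$, I may assume $c=0$. Define the energy $\rE$ and the dissipation
\begin{equation*}
	\mathrm{D}=\|\nabla^a\vair\|_{\rL^2}^2+\|\nabla\vocn\|_{\rL^2}^2+\int_{\T^2}|\vair|_{\Gammaairu}-\vocn|_{\Gab}|^3 ,
\end{equation*}
with the weights coming from $p_s$, so that $\rE'=-\mathrm{D}$. The key observation is the structure of the slow manifold. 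Each layer relaxes exponentially fast to its own horizontal mean: $\vair\approx a(t)$ and $\vocn\approx b(t)$. The only dissipation felt by these means is the cubic drag term, which scales like $|a-b|^3$, while momentum conservation forces $|\Omegaair|a+|\Omegaocn|b=0$. The reduced ODE is therefore $\dot\delta\simeq -\kappa|\delta|\delta$ for $\delta=a-b$, whose solutions decay like $(1+t)^{-1}$. This explains the rate $(1+t)^{-1}$ for the $\rL^2$ deviation.

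To make this rigorous I would prove a Łojasiewicz-type differential inequality $\rE^{3/2}\lesssim \mathrm{D}$ for states with zero total momentum, at least on the bounded set where the global $\rH^1$ (or $\rL^\infty_t\rH^2$) a-priori bounds hold. The argument splits $\vair=a+\tilde v^a$ and $\vocn=b+\tilde v^o$. Poincaré gives $\|\tilde v\|_{\rL^2}^2\lesssim\|\nabla v\|^2\le \mathrm{D}$. For the means, write $|a-b|^3\lesssim\int|\vair-\vocn|^3+\|\tilde v\|_{\rL^3(\text{trace})}^3$. The trace term is bounded by $\|\nabla v\|^3\lesssim \mathrm{D}^{3/2}$, which is harmless for small data and handled by the bounded-orbit a-priori bounds in general. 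Since momentum zero gives $|a|+|b|\simeq|a-b|$, this yields $\rE\lesssim \mathrm{D}+\mathrm{D}^{2/3}$, and hence $\rE^{3/2}\lesssim \mathrm{D}$ once $\rE$ is small. Convergence to $(c,c)$ is already known from \autoref{cor:no recurrence}, so $\rE$ does become small. Then $\rE'\le -C\rE^{3/2}$ gives $\rE\lesssim(1+t)^{-2}$, which is the $\rL^2$ estimate.

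For the gradient rate I would run a higher-order energy estimate for $\mathrm{F}=\|\nabla^a\vair\|^2+\|\nabla\vocn\|^2$, or equivalently test with $\dt v$ and $-\Delta v$. This should give $\mathrm{F}'+c\,\mathrm{F}\lesssim$ (forcing by the slow mode). The point is that the fast part decays exponentially, and the gradient is driven only by the boundary flux $|\delta|\delta$. The linear problem with Neumann data $g=O(|\delta|^2)$ produces $\nabla v=O(|\delta|^2)=O((1+t)^{-2})$ quasi-statically, but time-derivative corrections of size $\dot\delta\sim t^{-2}$ also appear. I would instead use the cleaner interpolation route. Integrating $\rE'=-\mathrm{D}$ over $[t,2t]$ gives $\int_t^{2t}\mathrm{F}\le \rE(t)\lesssim t^{-2}$. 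A differential inequality $\mathrm{F}'\lesssim \mathrm{F}\cdot(\text{integrable})+\dots$ then upgrades this to $\mathrm{F}(t)\lesssim t^{-3}$, i.e.\ $\|\nabla v\|\lesssim t^{-3/2}$, matching the claim. The main obstacle is this step. The nonlinear boundary term in the $\mathrm{F}$-estimate, namely the time derivative of the trace of $|\delta|\delta$, must be controlled using the $\rH^2$ a-priori bounds and the already established $\rL^2$ rate, so that no loss occurs. I would first try to estimate it by $\|\dt v\|_{\rL^2}\,\|v\|_{\rH^2}\,|\delta|$.

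For the pressure and vertical velocity estimates:
\begin{itemize}
	\item The vertical velocities satisfy $w=-\int\divH v$, so $\|w\|\lesssim\|\nabla v\|\lesssim t^{-3/2}$.
	\item The geopotential and pressure solve the averaged equation, so $\nablaH\Phi$ equals the vertical average of $-\dt v-(v\cdot\nabla)v+\Delta v$, with its divergence-free part removed. This is quadratic, $O(\|\nabla v\|\,\|v\|_{\rH^2})$, plus the boundary flux $O(|\delta|^2)=O(t^{-2})$.
\end{itemize}

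For optimality, choose initial data that are horizontally constant in each layer, $v^a_0=a_0$ and $v^o_0=b_0$ with $a_0\ne b_0$, and compare with the reduced ODE. The solution stays horizontally homogeneous, and a lower bound $\rE\gtrsim t^{-2}$ follows from $\rE'\ge-C\rE^{3/2}$, which uses $\mathrm{D}\lesssim\rE^{3/2}$ near the slow manifold. The boundary flux $|\delta|\delta\sim t^{-2}$ then forces $\|\nabla v\|\gtrsim t^{-3/2}$ through the trace inequality $|g|\lesssim\|\nabla v\|^{1/2}\|v\|_{\rH^2}^{1/2}$.
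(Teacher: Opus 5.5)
Your $\rL^2$ step is fine. The Galilean shift is legitimate: it removes the $c_*\cdot\nablaH$ terms, which the paper carries along and shows to integrate to zero. Your mean/fluctuation splitting is a constructive proof of \autoref{prop:poincare inequalitly}. Since $\rE(t)\le\rE(0)$, you get $\rE^{3/2}\le C\,\mathrm{D}$ for all $t\ge0$, so the rate $(1+t)^{-1}$ needs no trapping region. Note, though, that $\rE\to0$ comes from $\int_0^\infty\mathrm{D}<\infty$, not from \autoref{cor:no recurrence}.

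The gap is the gradient rate, which you leave as ``the main obstacle'' and which is where the paper does its real work. You never derive the needed differential inequality, and your plan for it points the wrong way. The boundary term need not be estimated at all: testing with $-\Delta v$ produces exactly $\frac{p_s}{3}\dt\int_{\T^2}|[v]|^3$, so it belongs in the functional (this is why the paper's $\mathrm{D}$ contains it). A bound like $\|\dt v\|_{\rL^2}\|v\|_{\rH^2}|\delta|$ would instead need a trace of $\dt v$ and decay of $\|\dt v\|_{\rL^2}$ and $\|v\|_{\rH^2}$, none of which you have.

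More seriously, the transport and pressure terms are critical. They are only bounded by $C\|v\|_{\rH^1}^2\|v\|_{\rH^2}^2$, and there are no uniform-in-time $\rH^2$ bounds: \autoref{prop:linftyl2} gives only $\rL^\infty_t\rH^1$ with a $t$-dependent constant, and $\rL^2_t\rH^2$. Absorbing these terms into $\|\Delta^a\vair\|^2+\|\Delta\vocn\|^2$ requires two ingredients you do not mention:
\begin{enumerate}
\item[(i)] eventual uniform smallness of the $\rH^1$-norm, i.e.\ the trapping region of \autoref{prop:trapping}, obtained by a continuation argument;
\item[(ii)] the bound $\|v\|_{\rH^2}^2\lesssim\|\Delta v\|^2+\|v\|_{\rL^2}^2$ under the nonlinear coupling. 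This is not standard, because the Hessian--Laplacian identity \autoref{lem:bochner formula} carries a positive boundary term $2p_s\int_{\T^2}|[v]|(\cdots)$ that can be absorbed only for small $\rH^1$-norm (\autoref{cor:elliptic estimate}).
\end{enumerate}

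Once (i) and (ii) are available, \autoref{lem:differential inequality dissipation} gives $\dt\mathrm{D}\le C\|v\|_{\rL^2}^2\,\mathrm{D}+C\|v\|_{\rL^2}^4$ for $t\ge t_0$. So your coefficient is $O(t^{-2})$ and your forcing is $O(t^{-4})$, and then your averaging idea does close the argument. Averaging the Gronwall bound over $\tau\in[t/2,t]$ and using $\int_{t/2}^{t}\mathrm{D}\le\rE(t/2)\lesssim t^{-2}$ gives $\mathrm{D}(t)\lesssim t^{-3}$ in one step. With your global bound $\rE\lesssim(1+t)^{-2}$, the same inequality also closes the continuation argument for (i) without \autoref{lem:elliptic estimate 2}. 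That would be a real simplification of the paper's route, which uses the nonlinear elliptic estimate \autoref{lem:elliptic estimate 2}, barrier functions and a bootstrap $t^{-3/2}\to t^{-21/8}\to t^{-3}$.

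Your optimality argument has the same weakness. There, $\mathrm{D}\lesssim\rE^{3/2}$ along the special solution and the $\rH^2$ factor in the trace inequality are assumed rather than proved. For comparison, the paper itself supports optimality only through the reduction to the toy model in \autoref{exa:algebraic decay}.
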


The reader may be surprised that the optimal decay rate is only polynomial. For dissipative, strongly parabolic systems posed on bounded domains, one typically expects exponential convergence to equilibrium, reflecting the presence of a spectral gap and the absence of degeneracies in the dynamics. The polynomial decay established here therefore indicates an unexpected degeneracy of the dynamics near the equilibria in phase space. This phenomenon is not caused by the equations themselves, which remain strongly parabolic and dissipative, but rather by the coupling conditions, whose structure becomes degenerate at the equilibria.
This result is in strong contrast to the situation of primitive equations where the system converges exponentially fast
see \cite{HK:16,Iid:25,Bin:25}.

\medskip 

Loosely speaking there are two dynamical effects which superimpose each other. Without the coupling by the wind-driven boundary conditions \eqref{eq:wind bc} the atmospheric and the oceanic velocities tend to their means at roughly exponential rate which follows from the asymptotics of primitive equations with Neumann boundary conditions. 
This would lead tp state $(c_1,c_2)$ where $c_1$ and $c_2$ in general do not coincide. It is easy to see that this cannot be a solution to \eqref{eq:wind bc}. Indeed, the boundary coupling \eqref{eq:wind bc} forces the two velocities to align. Due to the non-linear nature of the coupling this effect happens at merely polynomial scale as explained above. Therefore, it slows down the convergence from exponential to merely polynomial. 
The following graphics illustrates these two effects schematically: At time $t = 0$ the velocities are total chaotic. Very fast they tend to two (nearly) constant horizontal velocities with vertical velocities (close to) zero. Finally, the two velocities align at merely polynomial time scale. 

\medskip 

\tdplotsetmaincoords{70}{120}
\begin{figure}[h]
	\centering
	\makebox[\textwidth]{%
		\begin{tikzpicture}[tdplot_main_coords, scale=0.8]
			
			\newcommand{\boxes}{%
				
				\fill[blue!25, opacity=0.30]
				(0,0,0)--(4,0,0)--(4,4,0)--(0,4,0)--cycle;
				
				\fill[blue!20, opacity=0.18]
				(0,0,1)--(4,0,1)--(4,4,1)--(0,4,1)--cycle;
				
				\fill[blue!20, opacity=0.20]
				(0,0,0)--(4,0,0)--(4,0,1)--(0,0,1)--cycle;
				\fill[blue!20, opacity=0.20]
				(4,0,0)--(4,4,0)--(4,4,1)--(4,0,1)--cycle;
				\fill[blue!20, opacity=0.20]
				(4,4,0)--(0,4,0)--(0,4,1)--(4,4,1)--cycle;
				\fill[blue!20, opacity=0.20]
				(0,4,0)--(0,0,0)--(0,0,1)--(0,4,1)--cycle;
				
				\fill[gray!10, opacity=0.20]
				(0,0,1)--(4,0,1)--(4,4,1)--(0,4,1)--cycle;
				
				\fill[gray!5, opacity=0.10]
				(0,0,2)--(4,0,2)--(4,4,2)--(0,4,2)--cycle;
				
				\fill[gray!10, opacity=0.15]
				(0,0,1)--(4,0,1)--(4,0,2)--(0,0,2)--cycle;
				\fill[gray!10, opacity=0.15]
				(4,0,1)--(4,4,1)--(4,4,2)--(4,0,2)--cycle;
				\fill[gray!10, opacity=0.15]
				(4,4,1)--(0,4,1)--(0,4,2)--(4,4,2)--cycle;
				\fill[gray!10, opacity=0.15]
				(0,4,1)--(0,0,1)--(0,0,2)--(0,4,2)--cycle;
				
				\draw (0,0,0)--(4,0,0)--(4,4,0)--(0,4,0)--cycle;
				\draw (0,0,2)--(4,0,2)--(4,4,2)--(0,4,2)--cycle;
				
				\draw (0,0,0)--(0,0,2);
				\draw (4,0,0)--(4,0,2);
				\draw (4,4,0)--(4,4,2);
				\draw (0,4,0)--(0,4,2);
				
				\fill[blue!40, opacity=0.45]
				(0,0,1)--(4,0,1)--(4,4,1)--(0,4,1)--cycle;
				
				\draw[line width=0.6pt]
				(0,0,1)--(4,0,1)--(4,4,1)--(0,4,1)--cycle;
				
			}
			
			\foreach \x in {0.8,2.0,3.2}{
				\foreach \y in {0.8,2.0,3.2}{
					\begin{scope}
						\draw[->, black] (\x,\y,0.3) --
						++({0.28*cos(30*\x+10*\y)},
						{0.28*sin(25*\x)},
						{0.18*sin(15*\y)});
					\end{scope}
			}}
			
			\begin{scope}[xshift=0cm]
				\node at (2,0,-1.5) {$t=0$};
				\boxes
				
				\foreach \x in {0.8,2.0,3.2}{
					\foreach \y in {0.8,2.0,3.2}{
						\draw[->, black] (\x,\y,1.55) --
						++({0.38*sin(20*\x)},
						{0.38*cos(18*\y)},
						{0.25*cos(12*\x)});
				}}
				
				\node[anchor=east] at (5.2,7.5,1.5) {Ocean};
				\node[anchor=east] at (5.2,7.5,5) {Atmosphere};
				
			\end{scope}
			
			\begin{scope}[xshift=7cm]
				\node at (2,0,-1.5) {$t \gg 1$};
				\boxes
				
				\foreach \x in {0.8,2.0,3.2}{
					\foreach \y in {0.8,2.0,3.2}{
						\draw[->, black] (\x,\y,0.3) -- ++(0.35,0.05,0);
						\draw[->, black] (\x,\y,1.55) -- ++(-0.22,0.05,0);
				}}
				
				\node[anchor=east] at (5.2,7.5,1.5) {Ocean};
				\node[anchor=east] at (5.2,7.5,5) {Atmosphere};
				
			\end{scope}
			
			\begin{scope}[xshift=14cm]
				\node at (2,0,-1.5) {$t \to \infty$};
				\boxes
				
				\foreach \x in {0.8,2.0,3.2}{
					\foreach \y in {0.8,2.0,3.2}{
						\draw[->, black] (\x,\y,0.3) -- ++(0.18,0,0);
						\draw[->, black] (\x,\y,1.55) -- ++(0.22,0,0);
				}}
				
				\node[anchor=east] at (5.2,7.5,1.5) {Ocean};
				\node[anchor=east] at (5.2,7.5,5) {Atmosphere};
				
			\end{scope}
			
		\end{tikzpicture}
	}
	\caption{Evolution of the CAO-system in time}
	\label{fig:ocean-atmosphere}
\end{figure}

\medskip 

Let us to point out, that the \eqref{eq:CAO} system cannot be decoupled into two primitive equations and a simpler equation with a non-linear boundary conditions due to its non-local character. More precisely, the condition $\vair_0|_{\Gammaairb} = \vocn_0|_{\Gau}$, which would decouple the system, is not preserved along the semi-flow. 
This in particular, shows that in general for initial data $\vair_0|_{\Gammaairb} = \vocn_0|_{\Gau}$ the solution to \eqref{eq:CAO} does not coincide with the solutions to primitive equations with Neumann boundary conditions and one does not know whether they decay exponentially fast or not. 
Nevertheless, there are special initial data where the system decouples, see \autoref{exa:exponential decay}, and therefore decays exponentially fast. 

\subsection{Main challenges, new contributions and open problems}
\label{ssec:main challenges}

\

There are two powerful and far-reaching abstract frameworks for analyzing the long-term behaviour of parabolic evolution equations: the \emph{Lojasiewicz--Simon inequality} \cite{PW:11, Loj:62,Loj:65,Loj:84,Sim:83,Chi:03,CM:16,FS:00,Hua:06} and the \emph{(generalized) principle of linearized stability} \cite{Lun:95,PS:16,PSZ:08,PSZ:09,DPL:92,Hen:81}. Each has its own strengths and limitations.

\smallskip 

The Lojasiewicz--Simon inequality is well suited to systems whose dynamics become degenerate near equilibria. By allowing a non-standard Lojasiewicz exponent $\theta \neq \frac12$, it can capture degenerate dynamics near equilibria and thus accommodate polynomial decay rates. However, its applicability is restricted to gradient systems. In contrast, the (generalized) principle of linearized stability is not limited to gradient systems and applies equally well to equations with transport or other non-variational effects. Its applicability, however, requires the equilibria to be normally stable, in which case it yields exponential convergence to equilibrium.

\medskip

The \ref{eq:CAO}-system lies precisely outside the scope of both theories. On the one hand, the presence of the advection and pressure terms destroys the gradient structure, rendering the Lojasiewicz--Simon approach inapplicable. On the other hand, the coupling conditions become degenerate at the equilibria, so that the equilibria fail to be normally stable. Consequently, the system possess a center-manifold with a non-trivial dynamics on it and therefore the generalized principle of linearized stability cannot be applied either.

Furthermore, it is worth pointing out that the nonlinear coupling conditions are only of class $\mathcal{C}^{1,1}$, so the corresponding center manifold is, in general, expected to possess only $\mathcal{C}^1$ regularity. Consequently, the reduced dynamics cannot be obtained by performing a Taylor expansion of the center-manifold slaving function. 

\medskip

To the best of our knowledge, no previous example has been reported in which polynomial convergence to equilibrium for a dissipative, strongly parabolic system on a bounded domain is caused solely by the degeneration of the coupling conditions.
The proof developed in this work is robust and exploits the structural features responsible for the degeneracy. We therefore expect that the approach can be adapted to other evolution problems in which the degeneracy originates from boundary or coupling conditions rather than from the differential equations themselves.

\medskip 

We conclude this subsection by highlighting two open problems. 

First, a rigorous proof of the refined asymptotic behaviour described at the end of last subsection remains open. In particular, showing that the difference between the velocities and their means decays faster than the polynomial rate from \autoref{thm:long-term}. Although leveraging the derived equations and the center manifold description offers a plausible approach, the non-linear and non-local dynamics make this a highly non-trivial task. 
Because the central aim of the present work is to establish polynomial convergence rates, we defer a rigorous treatment of this faster decay to future research.

A second compelling question is whether our main result, \autoref{thm:long-term}, can be extended to the full $(p,q)$-scale. The global well-posedness result in \cite{BBHZ:25} holds in the $\rL^p_t \rL^q_x$-setting, giving rise to a continuous semi-flow on appropriate Besov spaces. However, for sufficiently large $p$ and $q$, the coupling conditions inherently modify the phase space, transforming it from a flat Banach space into a curved Banach manifold. 
Since our current methodology relies heavily on precise energy estimates, it cannot be readily adapted to this broader $(p,q)$-framework. 
Therefore, the study of the CAO-system on the full $(p,q)$-scale requires new analytic tools and we leave it for future studies.

\subsection{A toy example and the main ideas of the proof}

\

Let us now explain where the polynomial decay of the CAO-system comes from.
It arises due to the degeneracy of the coupling conditions \eqref{eq:wind bc}.
Indeed, the CAO-system consists of two incompressible primitive equations for the atmosphere and the ocean and for primitive equations exponential decay to equilibria is known, see e.g. \cite{HK:16,Iid:25,Bin:25}. 
It is therefore not surprising that the mechanism leading to the polynomial decay stems from the non-linear coupling conditions \eqref{eq:wind bc}. 
To understand this effect better let us consider the following 1D toy model
\begin{equation}
	\left\{
		\begin{aligned}
			\partial_t f(t,p) = f''(p), \quad &\text{ on } (0,1), \\
			\partial_t g(t,z) = g''(z), \quad &\text{ on } (-1,0), \\
		\end{aligned}
	\right.
	\label{eq:toy}
\end{equation}
supplemented to the boundary conditions
\begin{equation*}
	\begin{aligned} 
	f'(0) = 0 \quad &\text{ and } \quad 
	f'(1) = -|f(1) -g(0)|\cdot(f(1) -g(0)) , \\ 
	g'(-1) = 0 \quad &\text{ and } \quad g'(0) = 
	 |f(1) -g(0)|\cdot(f(1) -g(0)) ,
	\end{aligned} 
\end{equation*}
and the initial data $(f,g)|_{t = 0} = (f_0,g_0)$. 
It shares some features with the \eqref{eq:CAO}-model: it is easy to see that the energy is a strict Lyapunov functional and that the equilibria are constants $(c,c)$. On the other hand, considering the \emph{dissipation} given by
\begin{equation*}
	\mathrm{D}(t)	
	:= \| f' \|_{\rL^2(0,1)}^2
	+ \| g' \|_{\rL^2(-1,0)}^2 
	+ |f(1) -g(0) |^3
\end{equation*}
we see that it is a gradient flow 
\begin{equation*}
	\partial_t \binom{f}{g}
	= - \binom{\frac{\delta D}{\delta f}}{\frac{\delta D}{\delta f}}
\end{equation*}
of the dissipation $\mathrm{D}$ on $\rL^2(0,1) \times \rL^2(-1,0)$. Now, the (generalized) Lojasiewicz-Simon inequality yields
\begin{equation*}
	|\mathrm{D}-\mathrm{D}_\infty|^{1-\frac{1}{3}} \leq 
	C \cdot \| f'' \|_{\rL^2} + \| g'' \|_{\rL^2} = \| \nabla \mathrm{D} \|_{\rL^2}
\end{equation*}
near an equilibrium $(c,c)$, due to the mismatch between the exponents $2$ and $3$. 
The gradient structure implies 
\begin{equation}
	\partial_t \mathrm{D}(t) \leq 
	- C \cdot \mathrm{D}(t)^{2(1-\frac{1}{3})} = - C \cdot \mathrm{D}(t)^{\frac{4}{3}} 
	\label{eq:D intro}
\end{equation}
and hence principle of ODEs implies  
\begin{equation*}
	\mathrm{D}(t) \leq C \cdot (1+t)^{-3} .
\end{equation*}
In order to obtain the $\rL^2$- or energy decay rate we note that the result above yields
\begin{equation*}
	\| f' \|_{\rL^2} + 
	\| g' \|_{\rL^2} \leq C \cdot (1+t)^{-{\frac{3}{2}}} \quad \text{ and } \quad 
	|f(1)-g(0)| \leq C \cdot (1+t)^{-1} .
\end{equation*}
For fixed total momentum we obtain a Poincaré inequality\footnote{Indeed, such a Poincaré type inequality holds for the CAO-system, see \autoref{prop:poincare inequalitly}.} which penalizes the mismatch of the boundary conditions in the right way
\begin{equation*}
	\| f \|_{\rL^2}^2 + 
	\| g \|_{\rL^2}^2 
	\leq C \cdot ( 	\| f' \|_{\rL^2}^2 + 
	\| g' \|_{\rL^2}^2 + |f(1)-g(0)|^2 )
	\leq C \cdot (1+t)^{-3} + C \cdot (1+t)^{-2} \leq C' \cdot (1+t)^{-2} .
\end{equation*}
Hence, the energy or $\rL^2$-convergence rate matches exactly the convergence rate of the CAO-system from \autoref{thm:long-term}.
This is not a coincidence! 
In fact for initial data $(\vair_0,\vocn_0)$ which are $x,y$ independent, the CAO-system yield a slightly modified version of our toy example, see \autoref{exa:algebraic decay}. This confirms the optimality of the polynomial decay rates in \autoref{thm:long-term}.

\smallskip 

It is easy to see from our toy problem that the polynomial decay is a purely non-linear effect. Indeed, replacing the coupling boundary conditions by
\begin{equation*}
	f'(1) = -|f(1)-g(0)|
	, \qquad \text{ and } \qquad
	g'(0) = |f(1)-g(0)| ,
\end{equation*}
would lead to the standard exponent $\theta = \nicefrac{1}{2}$ in the Lojasiewicz-Simon inequality and therefore to exponential decay.

\medskip 

Let us now explain the main ideas of the proof of the main \autoref{thm:long-term}.
By \autoref{thm:equilibria} we are able to identify which equilibrium is approached by which trajectory, if the trajectory converges. 
With this preparation we come now to the core of the proof: establishing the optimal polynomial convergence rate in $\rH^1$. 

The first step to do so is to construct a neighbourhood of the prescribed equilibrium where the global solution is trapped, see \autoref{prop:trapping}. This requires several ingredients:
the first one is a Poincaré inequality which penalizes differences in the coupling by a carefully chosen rate, see \autoref{prop:poincare inequalitly},
the second one is the Hessian-Laplacian identity, \autoref{lem:bochner formula}, which contains a positive non-linear, boundary term. Using the subcriticality of this boundary term we are able to show equivalence of the $\rH^2$-norm and the graph norm of the Laplacians near the equilibrium, see \autoref{cor:elliptic estimate}.
Further, we need a non-linear elliptic estimate near the equilibrium, \autoref{lem:elliptic estimate 2}.
Finally, motivated by our toy example we consider the \textit{dissipation} of the CAO-system by
\begin{equation*}
	\mathrm{D}(t) := 
	\| \nabla^a \vair \|_{\rL^2(\Omegaair)}^2 
	+ \| \nabla^a \vocn \|_{\rL^2(\Omegaocn)}^2 +
	p_s \|\vatm|_{\Gammaairu} -\vocn|_{\Gab}\|_{\rL^3(\T^2)}^3 .
\end{equation*}
Using anisotropic estimates, interpolation theory and Sobolev embeddings, we establish a differential inequality for it in \autoref{lem:differential inequality dissipation}.
Now, \autoref{prop:trapping} follows by a continuation argument. 

\smallskip 

While we cannot use the Lojasiewicz-Simon inequality here, we nevertheless aim for the a similar differential inequality as \eqref{eq:D intro}.
Also in contrast to the toy model where the $\rL^2$-rate was concluded from the $\rL^2$-rate of the gradients by a Poincaré inequality, we do it here vice-versa: we first establish the $\rL^2$-convergence rate and use it to conclude the convergence rate of the gradients. 

Inside the trapping region around the prescribed equilibrium our Poincaré inequality \autoref{prop:poincare inequalitly} yields the energy inequality, see \autoref{lem:differential inequality},
\begin{equation}
	\partial_t \rE(t) + C \cdot \rE(t)^{\frac{3}{2}} \leq 0
	\label{eq:energy} 
\end{equation}
and finally the convergence in the $\rL^2$-norm with the desired convergence rate, see \autoref{lem:algebraic convergence rate L^2}.
Observe that the differential inequality \eqref{eq:energy} matches exactly the one predicted from the toy model. 
The deeper reason for this is that on the energy level the non-linearities and non-local terms do not enter due to the cancellation law and the incompressibility.

This is no longer true on the level of gradients and makes the the derivation of the convergence of gradients and its convergence rate more involved. 

The key estimate is the differential inequality
\begin{equation*}
	\partial_t \mathrm{D}(t)
	+ c_0 \cdot \mathrm{D}(t)^{\frac{4}{3}} \leq C \cdot (1+t)^{-2} . 
\end{equation*}
near the prescribed equilibrium, which mirrors \eqref{eq:D intro} but with an error term. 
The error term on the right-hand side comes from the non-linear terms and from the non-local terms. 
By a barrier function argument we obtain an upper bound for the decay rate $\mathcal{O}((1+t)^{-\frac{3}{4}})$ of the gradients, see \autoref{prop:algebraic convergence rate small}.
Unfortunately, as we can see in our toy problem this rate is not optimal.  

A closer investigation of the error term shows that it has, roughly speaking, the following structure
\begin{equation*}
	\mathrm{error} \sim  
	\| v \|_{\rL^2}^4 
	+ \| v \|_{\rL^2}^2 \cdot 
	\| \nabla v \|_{\rL^2}^2 .
\end{equation*}
Note that the first term decay like $\mathcal{O}((1+t)^{-4})$ by \autoref{lem:algebraic convergence rate L^2}, which by the structure of the differential inequality is exactly the saturation point: faster decay of the right-hand side than that does not yield to faster decay of the dissipation $\mathrm{D}$. 
While in the proof of \autoref{prop:algebraic convergence rate small}, we merely used for 
$\| \nabla v \|_{\rL^2}^2$ that it is bounded (and small), we have now more detailed information about it thanks to \autoref{prop:algebraic convergence rate small} itself. Therefore, we obtain a better decay rate of the error term.
Now, a boot-strap argument leads to the optimal decay rate, see \autoref{prop:final algebraic convergence rate small}. The argument saturates in finitely many steps.

\subsection{Structure of the article}

\

In \autoref{sec:prelim} we recall necessary preliminary result. 
In \autoref{sec:dynamics}, we investigate the dynamical properties of the system. 
This section is divided into two parts: 
In \autoref{sec:equilibria}, we perform the calculations which yield to the first half of \autoref{thm:equilibria}: the equilibria and the associated total energy, as well as that fact that the equilibria are not normally stable and the dynamics process a four-dimensional center manifold with non-trivial dynamics. Additional, we show that trajectories cannot stop, \autoref{cor:not finite time} and that the semi-flow has no nontrivial recurrent points. Both are easy consequences from \autoref{prop:energy lyapunov} and the analyticity of the trajectory. Finally, in \autoref{sec:phase space}, we show that the total momentum is conserved, and use this property to describe the structure of the phase space.

\autoref{sec:asymptotics} is devoted to the proof of the main result \autoref{thm:long-term}. 
In \autoref{ssec:apriori} we recall the a-priori estimates from \cite{BBHZ:25} and prove our key Poincare inequality \autoref{prop:poincare inequalitly}. In \autoref{ssec:trapping} we construct the trapping region, see \autoref{prop:trapping}. 
Finally, in \autoref{ssec:convergence rate} we establish convergence in $\rH^1$ to the prescribed equilibrium and determine the optimal rate. 

The section concludes with the long-term behaviour of the CAO-system for two particular initial data: 
In \autoref{exa:algebraic decay} we show that for certain initial data the CAO-system collapses to (a modified version) of our Toy example, which justifies the optimality of the decay rate.
In \autoref{exa:exponential decay} we demonstrate that, for certain initial data, exponential decay can be achieved.

\section{Preliminaries}
\label{sec:prelim}

In this section we recall some preliminary results we need in the sequel. 

\smallskip 

For ${v \colon \T^2 \times (a,b) \rightarrow \R^2}$, we denote by~$\overline{v} := \frac{1}{b-a} \int_{a}^{b} \divH v(\cdot , \cdot , \xi) \d \xi$ the vertical average.
Following \cite{CT:07,HK:16} we define 
\emph{hydrostatically solenoidal vector fields} by
\begin{equation*}
	\rL^2_{\sigmabar}(\Omega) = \overline{\{v \in \rC^\infty(\overline{\Omega};\R^2) \colon \divH \vbar = 0 \}}^{\| \cdot \|_{\rL^q(\Omega)}}
\end{equation*}
on $\Omega \in \{ \Omegaatm, \Omegaocn \}$. Its role for the primitive equations is analogous to the one of the solenoidal vector fields for the Navier-Stokes equations. 
We denote $\rH^s(\Omega)$ for $s \in \R$ the Bessel potential spaces, and define $\rH^s_{\sigmabar}(\Omega) := \rH^s(\Omega) \cap \rL^2_{\sigmabar}(\Omega)$ their hydrostatically solenoidal counterparts. 
Furthermore, we denote the jump at the interface by 
\begin{equation*} 
	[v] := \vair|_{\Gammaairu} - \vocn|_{\Gab}.
\end{equation*}  

Using the incompressibility and the boundary conditions, the vertical velocity $w$ is determined uniquely by the horizontal velocity $v$ by
\begin{equation}
	w(x,y,z)
	= - \int_{-h}^z \divH v(x,y,\xi) \, \d \xi . 
		\label{eq:wv}
\end{equation}
The geopotential and the pressure are determined by
\begin{align}
	\nablaH \Phi 
	&= (\nablaH (-\DeltaH)^{-1} \divH) ( \overline{\vair \cdot \nablaH \vair + w^{\air} \partial_p \vair} ) + p_s (\nablaH (-\DeltaH)^{-1} \divH) ( [v] | [v] | ) \\
	\nablaH \pi 
	&= (\nablaH (-\DeltaH)^{-1} \divH) ( \overline{\vocn \cdot \nablaH \vocn + w^{\ocn} \partial_z \vocn } ) + p_s (\nablaH (-\DeltaH)^{-1} \divH) ( [v] | [v] | ) .
	\label{eq:pressure}
\end{align}
Note that they are only determined up to constants. In order to fix them by assuming in the sequel that they are mean-free $\int_{\T^2} \Phi_s = 0$ and $\int_{\T^2} \pi_s = 0$.

\section{Dynamics}
\label{sec:dynamics}

This section is dedicated to the investigation of the dynamical aspects of the \eqref{eq:CAO}-system. 

\subsection{Global semi-flow}
\label{ssec:semi-flow}

\

In this section we summarize the results from \cite{BBHZ:25} for the situation of under consideration.
The main result from \cite[Theorem 3.1]{BBHZ:25} for $p = q = 2$ reads as follows. 

	\begin{thm}[Almost global strong well-posedness for the CAO-system]\label{thm:globalwellposed} 
	Let $T > 0$ and $(\vair_0, \vocn_0)^\top \in \rH^1_{\sigmabar}(\Omegaatm) \times 
	\rH^1_{\sigmabar}(\Omegaocn)$. 
	Then there is a unique, strong solution $(\vair,\omegaair,\Phi)$, $(\vocn, \wocn, \pi)$ to \eqref{eq:CAO}, subject to \eqref{eq:wind bc}, satisfying
	\begin{equation*}
		\begin{aligned}
			\vair &\in 
			\rH^{1}(0,T;\rL^2_{\sigmabar}(\Omegaair;\R^2)) \cap \rL^2(0,T;\rH^{2}(\Omegaair;\R^2)), \\
			\vocn &\in 
			\rH^{1}(0,T;\rL^2_{\sigmabar}(\Omegaocn;\R^2)) \cap \rL^2(0,T;\rH^{2}(\Omegaocn;\R^2)).
		\end{aligned}
	\end{equation*}
\end{thm}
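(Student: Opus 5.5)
This statement is \cite[Theorem 3.1]{BBHZ:25} specialised to $p=q=2$, so I will not reprove it from scratch. Instead I will indicate how the argument of \cite{BBHZ:25} runs in the Hilbert-space setting. The proof splits into local well-posedness and a priori bounds that rule out blow-up on $[0,T]$.

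\textbf{Local well-posedness.} First I reformulate \eqref{eq:CAO} with \eqref{eq:wind bc} as a quasilinear evolution problem for $(\vair,\vocn)$ on $\rL^2_{\sigmabar}(\Omegaatm)\times\rL^2_{\sigmabar}(\Omegaocn)$. The pieces are:
\begin{itemize}
\item the hydrostatic Stokes operator with Neumann conditions as the linear part;
\item the advection terms, the pressure terms \eqref{eq:pressure} and the interface flux $\pm |[v]|[v]$ as nonlinear perturbations.
\end{itemize}
The linear operator has maximal $\rL^2$-regularity with inhomogeneous Neumann data in the trace space $\rH^{1/4}(0,T;\rL^2(\T^2))\cap\rL^2(0,T;\rH^{1/2}(\T^2))$. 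I then check that $[v]\mapsto|[v]|[v]$ is locally Lipschitz from the trace of $\rH^1(0,T;\rL^2)\cap\rL^2(0,T;\rH^2)$ into this data space. This is where the $\rC^{1,1}$ structure of $x\mapsto|x|x$ is used, together with the embedding of the trace class into $\rL^\infty_t\rH^{1/2}_x$ and $\rL^4_t\rL^4_x$-type spaces in two dimensions. A contraction argument then yields a unique local strong solution for $\rH^1_{\sigmabar}$ data.

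\textbf{A priori bounds.} Next I prove bounds on $[0,T]$ in the spirit of Cao--Titi \cite{CT:07}, in this order:
\begin{enumerate}
\item The energy identity. Testing with $(\vair,p_s\vocn)$, suitably weighted so that the interface terms combine to $-\int_{\T^2}|[v]|^3\le0$, gives $\rL^\infty_t\rL^2\cap\rL^2_t\rH^1$ control.
\item $\rL^\infty_t\rL^4$ bounds for the baroclinic parts $\tilde v = v-\vbar$.
\item $\rH^1$ bounds for the barotropic modes $\vbar$, which satisfy 2D Navier--Stokes-type equations with boundary forcing.
\item Bounds for $\dz v$ and $\partial_p \vair$, followed by full $\rH^1$ bounds.
\end{enumerate}
In every estimate, the boundary terms created by the drag law must be absorbed. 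They have the favourable sign $-|[v]|^3$ in the energy, and at higher levels I control them by trace inequalities and the cubic dissipation.

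\textbf{Main obstacle and conclusion.} The hardest step is controlling these nonlinear interface terms at the gradient level. I would handle them by trace interpolation $\|v\|_{\rL^q(\Gamma)}\lesssim\|v\|^{1-\theta}\|v\|_{\rH^1}^{\theta}$ and Gronwall. Once these bounds hold, the maximal existence time cannot be finite, which gives the stated regularity on every $[0,T]$.
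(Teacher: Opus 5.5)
Your proposal matches the paper, which gives no independent proof here and simply quotes \cite[Theorem 3.1]{BBHZ:25} in the case $p=q=2$; your outline (maximal $\rL^2$-regularity with inhomogeneous Neumann data, then Cao--Titi-type a priori bounds in which the interface terms are absorbed) is in line with that reference, cf.\ the step recalled in the proof of \autoref{prop:linftyl2}. One small correction: with the normalisation in \eqref{eq:wind bc} no weight is needed, because testing with $(\vair,\vocn)$ itself already produces the interface term $-p_s\int_{\T^2}|[v]|^3$ (see \autoref{prop:energy lyapunov}), whereas putting the factor $p_s$ on $\vocn$ would destroy this cancellation unless $p_s=1$.
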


Using the embeddings
\begin{equation*}
	\begin{aligned}
		\rH^{1}(0,T;\rL^2_{\sigmabar}(\Omegaair;\R^2)) \cap \rL^2(0,T;\rH^{2}(\Omegaair;\R^2)) 
		&\hookrightarrow
		\rC([0,T];\rH^1_{\sigmabar}(\Omegaatm;\R^2)) \\
		\rH^{1}(0,T;\rL^2_{\sigmabar}(\Omegaocn;\R^2)) \cap \rL^2(0,T;\rH^{2}(\Omegaocn;\R^2))
		&\hookrightarrow
		\rC([0,T];\rH^1_{\sigmabar}(\Omegaocn;\R^2))
	\end{aligned}
\end{equation*}
and the fact that a continuity is a local property, we conclude the following dynamical interpretation of our previous result. 

\begin{cor}[Global semi-flow on phase space]
	For every $(\vair_0, \vocn_0)^\top \in \rH^1_{\sigmabar}(\Omegaatm) \times 
	\rH^1_{\sigmabar}(\Omegaocn)$ there exists a unique, strong solution
	\begin{equation*}
		(\vair,\vocn)^\top \in \rC([0,+\infty);\rH^1_{\sigmabar}(\Omegaatm;\R^2)\times\rH^1_{\sigmabar}(\Omegaocn;\R^2)) .
	\end{equation*} 
	Furthermore, the flow map 
	\begin{equation*}
		S(t) \colon \rH^1_{\sigmabar}(\Omegaatm;\R^2)\times\rH^1_{\sigmabar}(\Omegaocn;\R^2)
		\to \rH^1_{\sigmabar}(\Omegaatm;\R^2)\times\rH^1_{\sigmabar}(\Omegaocn;\R^2) \quad \text{ with } 
		\quad S(t) (\vair_0, \vocn_0)^\top 
		:= (\vair(t),\vocn(t))^\top 
	\end{equation*}
	is a continuous global semi-flow. 
\end{cor}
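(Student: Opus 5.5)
The statement is the corollary that \autoref{thm:globalwellposed} yields a continuous global semi-flow on $\rH^1_{\sigmabar}(\Omegaatm)\times\rH^1_{\sigmabar}(\Omegaocn)$. The proof has three parts: glue the solutions on bounded intervals into one global solution, verify the semi-group law, and show the flow map is continuous.

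\emph{Global solution.} Fix $(\vair_0,\vocn_0)$. For each $T>0$, \autoref{thm:globalwellposed} gives a unique strong solution on $(0,T)$ in the maximal regularity class. By the displayed embedding, this solution lies in $\rC([0,T];\rH^1_{\sigmabar})$. Take $T<T'$. Restricting the solution on $(0,T')$ to $(0,T)$ produces a solution in the same class on $(0,T)$, so by uniqueness it coincides with the solution on $(0,T)$. The solutions are therefore compatible and define one function on $[0,\infty)$. Continuity is a local property, so this function belongs to $\rC([0,\infty);\rH^1_{\sigmabar}(\Omegaatm)\times\rH^1_{\sigmabar}(\Omegaocn))$. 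Uniqueness of this global solution follows from uniqueness on every finite interval.

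\emph{Semi-group law.} We need $S(0)=\mathrm{id}$ and $S(t+s)=S(t)S(s)$. The first is immediate from the initial condition. For the second, fix $s\ge0$ and consider $\tau\mapsto(\vair,\vocn)(s+\tau)$. The system \eqref{eq:CAO}, \eqref{eq:wind bc} is autonomous, so this time shift is again a strong solution. Its initial value $(\vair,\vocn)(s)$ lies in $\rH^1_{\sigmabar}$ by continuity, and its regularity on $(0,T)$ is inherited from the regularity on $(s,s+T)$. Uniqueness then identifies the shifted function with $S(\cdot)(\vair,\vocn)(s)$.

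\emph{Continuity.} This is the main obstacle, because \autoref{thm:globalwellposed} as quoted does not state continuous dependence on the data. I would prove that the map $(t,u_0)\mapsto S(t)u_0$ is jointly continuous as follows.
\begin{enumerate}
\item Recall that the local theory in \cite{BBHZ:25} comes from maximal $\rL^2$-regularity and a contraction argument for the quasilinear problem with $\rC^{1,1}$ boundary nonlinearity. This gives local Lipschitz dependence of the solution on $u_0$ in the maximal-regularity norm, on a time interval $\delta$ that is uniform for data in a bounded set of $\rH^1_{\sigmabar}$.
\item To extend this to $[0,T]$, use the a-priori bound from \cite{BBHZ:25}: the $\rH^1$-norm of solutions stays bounded on $[0,T]$, uniformly for data in an $\rH^1$-neighbourhood of $u_0$.
\item With this uniform bound, finitely many local steps of length $\delta$ cover $[0,T]$. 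Composing the local Lipschitz estimates gives continuity of $u_0\mapsto S(\cdot)u_0$ in $\rC([0,T];\rH^1)$.
\end{enumerate}
Joint continuity then follows by combining this with continuity in $t$ of each individual trajectory.
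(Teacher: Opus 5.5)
Your gluing and semigroup-law arguments are correct and match the paper. The paper's whole argument is the embedding of the maximal-regularity class into $\rC([0,T];\rH^1_{\sigmabar})$ together with the locality of continuity. It does not argue continuous dependence on the data; it implicitly takes that from \cite{BBHZ:25}. You are right that this needs an argument, but your Step~1 asserts something the contraction does not give here, and Step~3 depends on it.

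Step~1 claims a time $\delta$ and a Lipschitz constant that are uniform for data in a \emph{bounded} subset of $\rH^1_{\sigmabar}$. For $p=q=2$, however, $\rH^1$ is critical for the maximal-regularity fixed point. The available bound is $\|v\cdot\nablaH v+w\dz v\|_{\rL^2}\le C\|v\|_{\rH^{3/2}}^2\le C\|v\|_{\rH^1}\|v\|_{\rH^2}$, the one used in \autoref{lem:differential inequality dissipation}. It controls the nonlinearity in $\rL^2(0,\delta;\rL^2)$ by $\|v\|_{\rL^4(0,\delta;\rH^{3/2})}^2$, with no positive power of $\delta$.

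The contraction therefore closes only when the linear evolution $u_{\mathrm{lin}}$ of the datum has small $\rL^4(0,\delta;\rH^{3/2})$-norm. That smallness depends on the profile of $u_0$, not on $\|u_0\|_{\rH^1}$: for data at horizontal frequency $N$, this norm stays comparable to $\|u_0\|_{\rH^1}$ once $\delta\gg N^{-2}$. So the uniform $\rH^1$ bound of Step~2 does not let you cover $[0,T]$ by finitely many steps of a fixed length.

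The standard repair replaces bounded sets by compactness of the trajectory.

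\begin{enumerate}
\item The contraction does give, for each fixed $\tilde u$, a radius $r$, a time $\delta$ and a constant $L\ge 1$ such that $\|S(t)u-S(t)u'\|_{\rH^1}\le L\|u-u'\|_{\rH^1}$ on $[0,\delta]$ for all $u,u'$ in the $r$-ball around $\tilde u$.
\item The set $K=\{S(t)u_0\colon t\in[0,T]\}$ is compact in $\rH^1_{\sigmabar}$. Covering it by finitely many half-radius balls yields common $r,\delta,L$ for all data within distance $r$ of $K$.
\item Take $\|u_1-u_0\|_{\rH^1}<rL^{-N}$ with $N=\lceil T/\delta\rceil$. Induction over $k$, using the semigroup law, gives $\|S(t)u_1-S(t)u_0\|_{\rH^1}\le L^{k+1}\|u_1-u_0\|_{\rH^1}<r$ on $[k\delta,(k+1)\delta]$.
\end{enumerate}

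This is the continuity in $\rC([0,T];\rH^1_{\sigmabar})$ you need, and joint continuity then follows exactly as you describe.
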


In fact, using Angenent's parameter trick \cite{Ang:90a,Ang:90b} and repeating the arguments from \cite[Section 7]{BBHZ:25} (only for time instead of time and space)
we can upgrade the continuity in time to real analyticity.

\begin{cor}[Real analyticity in time]\label{cor:analyticity}
	The global semi-flow is real analytic in time. Hence for every $(\vair_0, \vocn_0)^\top \in \rH^1_{\sigmabar}(\Omegaatm) \times 
\rH^1_{\sigmabar}(\Omegaocn)$ the unique strong solution satisfies
\begin{equation*}
	(\vair,\vocn)^\top \in \rC^{\omega}((0,+\infty);\rH^1_{\sigmabar}(\Omegaatm;\R^2)\times\rH^1_{\sigmabar}(\Omegaocn;\R^2)) .
\end{equation*} 	
\end{cor}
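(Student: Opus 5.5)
The plan is to apply Angenent's parameter trick in time only, following \cite[Section 7]{BBHZ:25} but without the spatial translation parameters. Fix $(\vair_0,\vocn_0)^\top$ and let $(\vair,\vocn)$ be the global strong solution of \autoref{thm:globalwellposed}. Fix $T>0$ and $t_0\in(0,T)$. For a parameter $\lambda$ in a small interval $(1-\varepsilon,1+\varepsilon)$ set
\begin{equation*}
	u_\lambda(t) := (\vair(\lambda t),\vocn(\lambda t)), \qquad t\in[0,T/(1+\varepsilon)] .
\end{equation*}
Then $u_\lambda$ solves the rescaled system: the interior equations are multiplied by $\lambda$, and the boundary conditions \eqref{eq:wind bc} stay unchanged because they involve no time derivative. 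The initial datum is the same for every $\lambda$. We write the problem as $G(u,\lambda)=0$, where
\begin{equation*}
	G\colon \mathbb{E}_T\times(1-\varepsilon,1+\varepsilon)\to \mathbb{F}_T,
\end{equation*}
$\mathbb{E}_T=\rH^1(0,T;\rL^2_{\sigmabar})\cap\rL^2(0,T;\rH^2)$ is the maximal regularity class, and $\mathbb{F}_T$ collects the interior data space $\rL^2(0,T;\rL^2_{\sigmabar})$, the Neumann boundary trace spaces $\rH^{1/4}(0,T;\rL^2)\cap\rL^2(0,T;\rH^{1/2})$ on the four boundary pieces, and $\rH^1_{\sigmabar}$ for the initial value. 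As in \cite{BBHZ:25}, the hydrostatic Helmholtz projection absorbs the pressure terms \eqref{eq:pressure}. The first component of $G$ is $\dt u-\lambda(\Delta u - F(u))$, where $F$ collects the advection terms; the boundary components are $\partial_\nu u - N([u])$, where $N$ is the drag nonlinearity $[v]\mapsto |[v]|[v]$, scaled by the constants $\pm p_s^{\pm1}$.

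The key step is to show that $G$ is real analytic, or at least that analytic dependence on $\lambda$ is preserved. Dependence on $\lambda$ is affine, and the advection terms are bilinear and bounded on $\mathbb{E}_T$ by the estimates of \cite{BBHZ:25}. The main obstacle is the drag term $|[v]|[v]$, which is only $\mathcal C^{1,1}$ in $[v]$ and therefore not analytic as a map of $u$.

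I would try to circumvent this as follows. Since the boundary operator carries no $\lambda$, apply the implicit function theorem to the map $(u,\lambda)\mapsto G(u,\lambda)$, which is $\mathcal C^1$. The derivative $D_uG(u_1,1)$ is the linearization at the solution. By maximal $\rL^2$-regularity for the linearized two-phase problem with Neumann-type (Robin-like, lower order) coupling, exactly as in the local theory of \cite{BBHZ:25}, this derivative is an isomorphism $\mathbb{E}_T\to\mathbb{F}_T$. The solution branch $\lambda\mapsto u_\lambda$ is therefore $\mathcal C^1$, and by the $\lambda$-affine structure and a bootstrap it is even $\mathcal C^\infty$. To reach genuine analyticity I would instead use the Banach-analytic structure that \cite[Section 7]{BBHZ:25} already provides for the drag term. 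That paper proves analyticity in time and space, so its treatment of $|[v]|[v]$ — whether it restricts to the relevant function class or reformulates the nonlinearity so that it is analytic on the needed spaces — can be reused verbatim once the spatial parameters are deleted.

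Once the branch is analytic, evaluate it at $t=t_0$. The map $\lambda\mapsto u_\lambda(t_0)=u(\lambda t_0)\in\rH^1_{\sigmabar}$ is analytic, via the embedding $\mathbb{E}_T\hookrightarrow\rC([0,T];\rH^1_{\sigmabar})$ used above. Hence $s\mapsto u(s)$ is analytic near $s=t_0$. Since $t_0$ and $T$ are arbitrary, we obtain $(\vair,\vocn)\in\rC^\omega((0,\infty);\rH^1_{\sigmabar}\times\rH^1_{\sigmabar})$. The reason for excluding $t=0$ is that the initial datum is merely $\rH^1$ and carries no parameter.
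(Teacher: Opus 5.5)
Your route is the paper's own: its proof of \autoref{cor:analyticity} is the one-line appeal to Angenent's trick and \cite[Section 7]{BBHZ:25}, applied in time only. However, the step you isolate yourself is a genuine gap, and it cannot be closed. The implicit function theorem returns a branch $\lambda\mapsto u_\lambda$ that is only as regular in $\lambda$ as $G$ is in $u$. Since $[v]\mapsto|[v]|[v]$ is $\mathcal{C}^{1,1}$ but not $\mathcal{C}^2$, you get a $\rC^1$ branch. Neither the affine $\lambda$-dependence nor a bootstrap improves this: the scalar ODE $y'=|y|y+1$ already has solutions that are $\rC^{2,1}$ but not $\rC^3$ at the time where $y$ crosses $0$. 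Reusing \cite[Section 7]{BBHZ:25} ``verbatim'' cannot repair this either. No reformulation makes the drag analytic near $[v]=0$, and in fact the claimed conclusion fails.

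Here is a counterexample. Take $\vair_0=(a_0(p),0)$ and $\vocn_0=0$ with $a_0\in\rH^1(p_a,p_s)$, $a_0(p_s)=1$ and $\int_{p_a}^{p_s}a_0\d p<0$. By uniqueness, translation invariance and the reflection $v_2\mapsto-v_2$, the solution stays $x,y$-independent with vanishing second components, as in \autoref{exa:algebraic decay}. Let $g(t)$ be the first component of the spatially constant jump $[v](t)$; then $g$ is continuous with $g(0)=1$. Integrating the ocean equation over $\Omegaocn$, the advection and pressure terms integrate to zero, so
\begin{equation*}
	\frac{\d}{\d t}\int_{\Omegaocn}\vocn(t)=p_s\,|\T^2|\,\bigl(|g(t)|g(t),0\bigr).
\end{equation*}
While $g\geq0$, the drag is an outflow for the atmosphere and an inflow for the ocean. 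Testing with positive parts then shows that the first component of $\vocn$ stays $\geq0$. It also shows that the first component of $\vair$ stays below the homogeneous Neumann flow $\tilde a$ of $a_0$. Since $\tilde a(t,p_s)$ tends to the negative mean of $a_0$, $g$ must change sign somewhere in $(0,\infty)$.

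Now suppose the trajectory were real analytic in $\rH^1$. Then $g$, being a trace, and the left-hand side above would be analytic, hence so would $|g|g$. But $g\not\equiv0$ changes sign at a zero $t^*$ of odd order $k$. There $|g|g=\operatorname{sign}(t-t^*)\,(t-t^*)^{2k}\phi(t)$ with $\phi$ analytic and $\phi(t^*)\neq0$, so its $2k$-th derivative jumps at $t^*$.

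So the statement is false as formulated, and the parameter trick can deliver at most finite regularity in time. Later uses of analyticity in the paper, such as \autoref{cor:not finite time}, would need a separate justification.
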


\subsection{Equilibria}
\label{sec:equilibria}

\

In this section we study equilibria of the semi-flow generated by \eqref{eq:CAO}.
A equilibrium of the semi-flow generated by \eqref{eq:CAO} is a steady state of \eqref{eq:CAO}, i.e. a solution to 
	\begin{equation}
	\left\{
	\begin{aligned}
		\vair \cdot \nablaH \vair + w^{\air} \partial_p \vair  + \nablaH \Phi_s &= \Delta^{\atm} \vair, &&\text{ on } \Omegaair \times (0,T),  \\
		\divH \vairbar &= 0, &&\text{ on } \Omegaair \times (0,T),\\
		\vocn \cdot \nablaH \vocn + w^{\ocn} \dz \vocn + \nablaH \pi_s &= \Delta \vocn, &&\text{ on } \Omegaocn \times (0,T), \\
		\divH \vocnbar &= 0, &&\text{ on } \Omegaocn \times (0,T), \\
	\end{aligned}
	\right. 
	\label{eq:equilibrium}
\end{equation}
supplemented by the boundary conditions 
\begin{equation*} 
	\begin{aligned} 
		(\partial_p \vatm)|_{\Gammaairb} = 0 \quad &\text{ and } \quad (\partial_p \vatm)|_{\Gammaairu} = -p_s^{-1} \cdot |\vatm|_{\Gammaairu} -\vocn|_{\Gab}|\cdot(\vatm|_{\Gammaairu} -\vocn|_{\Gab} ) , \\ 
		(\partial_z \vocn)|_{\Gab} = 0 \quad &\text{ and } \quad (\partial_z \vocn)|_{\Gau} = p_s \cdot  |\vatm|_{\Gammaairu} -\vocn|_{\Gab}|\cdot  (\vatm|_{\Gammaairu} -\vocn|_{\Gab}) .
	\end{aligned} 
\end{equation*}

The equilibria of the semi-flow generated by \eqref{eq:CAO} can be characterized as follows. 

\begin{prop}\label{prop:steady states}
	The set $\sE$ of equilibria of \eqref{eq:CAO} forms a two-dimensional vector-space 
	\begin{equation*}
		\sE
		= \{ (c_*,c_*) \colon c_* \in \R^2 \} .
	\end{equation*} 
	In particular, $\sE$ is a real analytic two-dimensional manifold. 
\end{prop}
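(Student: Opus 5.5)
We prove the two inclusions separately, and the plan is to use the energy identity. First the easy inclusion. For a constant pair $(c_*,c_*)$ with $c_*\in\R^2$ we have $\divH \vbar = 0$. By \eqref{eq:wv}, the vertical velocities vanish. All derivatives vanish, so both momentum equations hold with $\Phi_s = \pi_s = 0$. The jump is $[v] = c_* - c_* = 0$, so the nonlinear interface conditions in \eqref{eq:wind bc} reduce to homogeneous Neumann conditions, which are satisfied. Hence $\{(c_*,c_*)\}\subset\sE$.

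For the converse, let $(\vair,\vocn)$ be a (strong, $\rH^2$) steady state. Test the atmospheric equation with $\vair$ and the oceanic equation with $p_s\vocn$; more precisely, use the weights that make the interface terms combine, as in the energy identity underlying \autoref{thm:equilibria}.

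\emph{Advection terms.} These vanish by the usual cancellation: integrate by parts, use $\div u = 0$ together with $w=0$ on the top and bottom boundaries, and use periodicity in the horizontal variables.

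\emph{Pressure terms.} The pressure depends only on the horizontal variables. Hence
\begin{equation*}
\int \nablaH \Phi_s\cdot \vair = (p_s-p_a)\int_{\T^2}\nablaH\Phi_s\cdot\vairbar = 0 ,
\end{equation*}
since $\divH \vairbar = 0$. The oceanic pressure term vanishes in the same way.

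\emph{Diffusion terms.} Integrating by parts the atmospheric diffusion term (with the $p^2$ weight) gives $-\|\nabla^a\vair\|^2$ plus the boundary term $p_s^2\int_{\T^2}\partial_p\vair\cdot\vair|_{\Gammaairu}$. By \eqref{eq:wind bc} this boundary term equals $-p_s\int_{\T^2} |[v]|[v]\cdot \vair|_{\Gammaairu}$. The oceanic boundary term equals $p_s\int_{\T^2}|[v]|[v]\cdot\vocn|_{\Gau}$, where the interface identification of the paper is used. The exact constants must be checked against the paper's normalization; the point is that the two boundary contributions combine to $-p_s\|[v]\|_{\rL^3(\T^2)}^3$.

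Summing all contributions yields
\begin{equation*}
0=\|\nabla^a\vair\|_{\rL^2}^2+\|\nabla\vocn\|_{\rL^2}^2+p_s\|[v]\|_{\rL^3(\T^2)}^3 .
\end{equation*}
Every term is nonnegative, so each vanishes. Thus $\vair$ and $\vocn$ are constant on their connected domains; for the atmosphere, $\nabla^a$ is nondegenerate because $p\ge p_a>0$. In addition $[v]=0$, so the two constants coincide. This gives $\sE=\{(c_*,c_*)\}$. The set is a linear two-dimensional subspace, hence trivially a real analytic manifold.

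The main obstacle is justifying the boundary-term bookkeeping: the signs and the $p_s$ factors must combine exactly into $-p_s\|[v]\|_{\rL^3}^3$. This requires choosing the correct relative weighting of the two energies, consistent with the total energy being a Lyapunov functional, and evaluating traces at the correct coupled boundaries. A second point to check is that the pressure terms vanish, which requires both that the pressures are independent of the vertical variable and that $\divH\vbar=0$. Regularity is not an issue, since strong solutions lie in $\rH^2$ and all the integrations by parts are legitimate.
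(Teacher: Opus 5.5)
Your proposal is correct and follows the paper's proof. Both test the atmospheric equation with $\vair$ and the oceanic one with $\vocn$, integrate by parts, and use the coupling conditions to turn the interface terms into $p_s\|[v]\|_{\rL^3(\T^2)}^3$, which forces both velocities to be the same constant. The one slip is your opening suggestion to weight the ocean equation by $p_s$: the correct weight is $1$, since the factor $p_s^2$ from the $p^2$-weight at $p=p_s$ times the $p_s^{-1}$ in the atmospheric condition already matches the $p_s$ in the oceanic condition, and your own boundary computation in fact uses the unit weight.
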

\begin{proof}
	It is easy to see that $(c_*,c_*)$ for $c_* \in \R^2$ is an equilibrium. For the other direction we obtain by testing \eqref{eq:equilibrium}$_1$ by $\vair$ and \eqref{eq:equilibrium}$_3$ by $\vocn$, integrating by parts and adding both equations that
	\begin{align*}
		0 &= \int_{\Omegaair} (|\nablaH \vair|^2 + |p \partial_p \vair|^2 )
		+ \int_{\Omegaocn} |\nabla \vocn|^2 
		-\int_{\T^2} p_s^2 \vair \partial_p \vair -\int_{\T^2} \vocn \partial_z \vocn \\
		&= \int_{\Omegaair} (|\nablaH \vair|^2 + |p \partial_p \vair|^2) 
		+ \int_{\Omegaocn} |\nabla \vocn|^2 
		+\int_{\T^2} p_s |[v]|^3 .
	\end{align*}
	This implies $\vair \equiv c_*, \vocn \equiv c_\dagger$ with $c_*,c_\dagger \in \R^2$, as well as $c_* = \vair_{\Gammaairu} = \vocn_{\Gab} = c_\dagger$, i.e. $(\vair,\vocn) = (c_*,c_*)$ for $c_* \in \R^2$.   
\end{proof}

Now, we consider the \emph{total energy} of \eqref{eq:CAO} given by
\begin{equation*}
	\rE(t)
	:= \frac{1}{2}\int_{\Omegaatm} |\vair(t)|^2 + \frac{1}{2}\int_{\Omegaocn} |\vocn(t)|^2 
\end{equation*}
and its dissipation
\begin{equation*}
	\mathrm{D}(t) := 
	\| \nabla^a \vair \|_{\rL^2(\Omegaair)}^2 
	+ \| \nabla^a \vocn \|_{\rL^2(\Omegaocn)}^2 +
	p_s \|[v]\|_{\rL^3(\T^2)}^3  .
\end{equation*}
Note that the total energy consists of the kinetic energies of the atmosphere and the ocean. It is a strict Lyapunov functional of the semi-flow generated by \eqref{eq:CAO}.

\begin{prop}\label{prop:energy lyapunov}
	We have
	\begin{equation*}
		\partial_t \rE(t) = - \mathrm{D}(t) 
	\end{equation*}
	and hence the total energy $\rE$ is a strict Lyapunov functional.
\end{prop}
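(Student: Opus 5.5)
We compute $\partial_t \rE$ along a strong solution from \autoref{thm:globalwellposed}, where $\vair,\vocn \in \rH^1_t\rL^2_x \cap \rL^2_t\rH^2_x$. For such functions $t\mapsto \rE(t)$ is absolutely continuous, and
\begin{equation*}
\partial_t \rE = \int_{\Omegaair} \dt\vair\cdot\vair + \int_{\Omegaocn}\dt\vocn\cdot\vocn .
\end{equation*}
We substitute the equations \eqref{eq:CAO} and treat three groups of terms separately: the advection terms, the pressure terms, and the diffusion terms.

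\emph{Advection.} For $\int_{\Omegaocn} (\vocn\cdot\nablaH\vocn + \wocn\dz\vocn)\cdot\vocn$ we write the integrand as $\frac12 \uocn\cdot\nabla|\vocn|^2$ and integrate by parts. Since $\div\uocn=0$ and $\wocn=0$ on $\Gab\cup\Gau$ (with horizontal periodicity), the integral vanishes. The same argument applies in the atmosphere with $\partial_p$, using $\div\uair=0$ and $\wair|_{\Gammaairb\cup\Gammaairu}=0$.

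\emph{Pressure.} $\Phi_s$ and $\pi_s$ depend only on the horizontal variables, so
\begin{equation*}
\int_{\Omegaocn}\nablaH\pi_s\cdot\vocn = h\int_{\T^2}\nablaH\pi_s\cdot\vocnbar = -h\int_{\T^2}\pi_s\,\divH\vocnbar = 0,
\end{equation*}
because $\vocn\in\rL^2_{\sigmabar}$. The atmospheric pressure term vanishes in the same way.

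\emph{Diffusion.} Integration by parts gives
\begin{equation*}
\int_{\Omegaocn}\Delta\vocn\cdot\vocn = -\|\nabla\vocn\|_{\rL^2}^2 + \int_{\T^2}\dz\vocn|_{\Gau}\cdot\vocn|_{\Gau},
\end{equation*}
and
\begin{equation*}
\int_{\Omegaair}\Delta^{\atm}\vair\cdot\vair = -\|\nablaH\vair\|^2 - \|p\partial_p\vair\|^2 + p_s^2\int_{\T^2}\partial_p\vair|_{\Gammaairu}\cdot\vair|_{\Gammaairu}.
\end{equation*}
The bottom contributions vanish by the homogeneous Neumann conditions.

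We then insert \eqref{eq:wind bc}. The atmospheric boundary term becomes $-p_s\int|[v]|[v]\cdot\vair|_{\Gammaairu}$ and the oceanic one becomes $p_s\int|[v]|[v]\cdot\vocn|_{\Gau}$. At this point we must pair the interface traces as the coupling prescribes. Following the same identification used in the proof of \autoref{prop:steady states}, the oceanic trace entering $[v]$ is the one on the coupled boundary, so the two terms combine to $-p_s\int_{\T^2}|[v]|^3$. Collecting everything gives
\begin{equation*}
\partial_t\rE = -\|\nabla^a\vair\|^2 - \|\nabla\vocn\|^2 - p_s\|[v]\|_{\rL^3}^3 = -\mathrm{D}.
\end{equation*}

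\emph{Strict Lyapunov property.} Since $\mathrm{D}\ge0$, $\rE$ is nonincreasing. Suppose $\rE$ is constant on some time interval. Then $\mathrm{D}=0$ a.e. there, and exactly as in \autoref{prop:steady states} the solution equals some $(c,c)$, i.e.\ an equilibrium. Hence $\rE$ is strictly decreasing along every non-equilibrium trajectory.

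The only delicate point is justifying the integrations by parts. Traces of $\rH^2$ functions and the cubic term $|[v]|^3\in\rL^1(\T^2)$ are fine for a.e.\ $t$, since $\rH^2\hookrightarrow \rL^\infty$ and $\rH^1\hookrightarrow\rL^4$ on traces. To make this fully rigorous we would also approximate by smooth hydrostatically solenoidal fields when handling the pressure term.
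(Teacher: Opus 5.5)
Your proposal is correct and follows essentially the same route as the paper. You test the equations with $(\vair,\vocn)$, integrate by parts, and insert the wind-driven conditions so that the two interface terms combine to $-p_s\int_{\T^2}|[v]|^3$; strictness then comes from characterizing $\mathrm{D}=0$ as in \autoref{prop:steady states}. You are in fact more explicit than the paper on two points: why the advection and pressure terms vanish, and reading the oceanic trace in $[v]$ on the coupled surface $\Gau$, which is what makes the boundary terms combine.
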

\begin{proof}
	Using the wind-driven boundary conditions, integration by parts yields
	\begin{align*}
		\partial_t \rE(t)
		&= \int_{\Omegaatm} \vair \cdot \partial_t \vair  + \int_{\Omegaocn} \vocn \cdot \partial_t \vocn 
		\\
		&= \int_{\Omegaatm} \vair \cdot (\DeltaH \vair + \partial_p(p^2\partial_p \vair) - \vair \cdot \nablaH \vair - \omega \partial_p \vair - \nablaH \Phi) \\ 
		&+ \int_{\Omegaocn} \vocn \cdot (\Delta \vocn - \vocn \cdot \nablaH \vocn - w \partial_z \vocn - \nablaH \pi) 
		\\
		&= - \int_{\Omegaatm} |(\nablaH \vair|^2 + |p \partial_p \vair|^2) - \int_{\Omegaocn} |\nabla \vocn|^2 +\int_{\T^2} p_s^2 \vair \partial_p \vair +\int_{\T^2} \vocn \partial_z \vocn  
		\\
		&= - \int_{\Omegaatm} (|\nablaH \vair|^2 
		+ |p \partial_p \vair|^2 ) - \int_{\Omegaocn} |\nabla \vocn|^2 \\
		&-\int_{\T^2} p_s \vair \cdot ([v]) |[v]| + p_s \int_{\T^2} \vocn \cdot ([v]) |[v]| \\
		&= - \int_{\Omegaatm} (|\nablaH \vair|^2 
		+ |p \partial_p \vair|^2) - \int_{\Omegaocn} |\nabla \vocn|^2 -\int_{\T^2} p_s ([v]) \cdot ([v]) |[v]| \\
		&= - \int_{\Omegaatm} (|\nablaH \vair|^2 
		+ |p \partial_p \vair|^2) - \int_{\Omegaocn} |\nabla \vocn|^2 -\int_{\T^2} p_s |[v]|^3 = - \mathrm{D}(t)
		\leq 0 .
	\end{align*}
	Furthermore, we see from this calculation that 
	\begin{equation*}
		0 = \partial_t E(t)
		= - \int_{\Omegaatm} (|\nablaH \vair|^2 
		+ |p \partial_p \vair|^2) - \int_{\Omegaocn} |\nabla \vocn|^2 -\int_{\T^2} p_s |[v]|^3 ,
	\end{equation*}
	this is equivalent to $(\vair,\vocn) = (c_*,c_*)$ as we have seen in the proof of \autoref{prop:steady states}.
\end{proof}

The total linearization of \eqref{eq:CAO} at an equilibrium $(c_*,c_*)$ is given by $\bar{\rA}_c \colon D(\bar{\rA}_c) \subset \rL^2_{\sigmabar}(\Omegaair) \times \rL^2_{\sigmabar}(\Omegaocn) \to \rL^2_{\sigmabar}(\Omegaair) \times \rL^2_{\sigmabar}(\Omegaocn)$ with
\begin{equation}
	\begin{aligned}
		\bar{\rA}_{c_*} 
		&:=
		\begin{pmatrix}
			\DeltaH + \partial_p p^2 \partial_p - c_* \cdot \nablaH & 0 \\
			0 & \Delta - c_* \cdot \nablaH
		\end{pmatrix}
		\\
		D(\bar{\rA}_c)
		&:= \left\{ \binom{\hvair}{\hvocn} \in \rH^2_{\bar{\sigma}}(\Omegaair)\times \rH^2_{\bar{\sigma}}(\Omegaocn) \colon \partial_n \hvair|_{\Gammaairb} = \partial_n \hvair|_{\Gammaairu} = 0 \ \text{ and } \ \partial_n \hvocn|_{\Gau} = \partial_n \hvocn|_{\Gab} = 0 \right\} .
	\end{aligned} 
\end{equation}
Here, we have used the fact that the hydrostatic Stokes operator with Neumann boundary conditions coincide with the part of the Laplacian with Neumann boundary conditions in $\rL^2_{\sigmabar}$, see \cite{GGHHK:17}. 
We denote by $\rX_1 := D(\bar{\rA}_{c_*})$ equipped with the graph norm and $\rX_0 := \rL^2_{\sigmabar}(\Omegaair) \times \rL^2_{\sigmabar}(\Omegaocn)$. 
The following result is a direct consequence of the main result of \cite{GGHHK:17}.

\begin{lem}\label{lem:Abar mr}
	The operator $\bar{\rA}_{c_*}$ admits bounded $\rH^\infty$-calculus on $\rX_0$. In particular, it has maximal regularity on $\rX_0$ and generate an analytic semigroup $(T_{\bar{\rA}_{c_*}}(t))_{t \geq 0}$ on $\rX_0$.  
\end{lem}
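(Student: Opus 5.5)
The plan is to view $\bar{\rA}_{c_*}$ as a bounded (in fact relatively compact) perturbation of the diagonal hydrostatic Stokes operator with Neumann boundary conditions, and then invoke the result of \cite{GGHHK:17}. First I split
\begin{equation*}
	\bar{\rA}_{c_*} = \bar{\rA}_0 + \rB_{c_*}, \qquad
	\bar{\rA}_0 := \begin{pmatrix} \DeltaH + \partial_p p^2 \partial_p & 0 \\ 0 & \Delta \end{pmatrix}, \qquad
	\rB_{c_*} := -\begin{pmatrix} c_*\cdot\nablaH & 0 \\ 0 & c_*\cdot\nablaH \end{pmatrix}.
\end{equation*}
The operator $\bar{\rA}_0$ is diagonal with the same Neumann domain, so it suffices to treat each block separately. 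For the ocean block, \cite{GGHHK:17} gives that the hydrostatic Stokes operator with Neumann conditions coincides with the Neumann Laplacian restricted to $\rL^2_{\sigmabar}(\Omegaocn)$. It is self-adjoint and nonpositive in $\rL^2$, hence it has a bounded $\rH^\infty$-calculus of angle $0$ after a shift.

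For the atmosphere block I first check that $\DeltaH + \partial_p p^2\partial_p$ with Neumann conditions is uniformly elliptic on $\Omegaair$, since $p\in[p_a,p_s]$ with $p_a>0$. It is symmetric with respect to the $\rL^2$ pairing: integration by parts leaves no boundary terms because of the Neumann conditions. I then check that it leaves $\rL^2_{\sigmabar}$ invariant, because vertical averaging commutes with it up to boundary terms that vanish, exactly as in the constant coefficient case of \cite{GGHHK:17}. Consequently its part in $\rL^2_{\sigmabar}(\Omegaair)$ is again self-adjoint and nonpositive, and the same conclusion holds.

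Next, the perturbation $\rB_{c_*}$ is a first-order operator with constant coefficients. It commutes with vertical averaging and with $\divH$, so it maps $\rL^2_{\sigmabar}$-valued $\rH^1$ functions into $\rL^2_{\sigmabar}$. Moreover it is bounded from $D(\bar{\rA}_0)\cap$ (interpolation space $\rX_{1/2}$) to $\rX_0$, hence it is of lower order. By the standard perturbation theorem for $\rH^\infty$-calculus under lower-order perturbations (e.g.\ Kunstmann--Weis, or the version in \cite{PS:16}), $\omega - \bar{\rA}_{c_*}$ has a bounded $\rH^\infty$-calculus for $\omega$ large. Alternatively, in the Hilbert space setting I can argue more directly: $\rB_{c_*}$ is skew-adjoint on $\rL^2$ (it is a transport term on the torus with constant velocity and no normal component), so $\bar{\rA}_{c_*}$ is m-dissipative. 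In fact it is normal, since constant coefficient horizontal derivatives commute with $\bar{\rA}_0$. A normal sectorial operator on a Hilbert space has a bounded $\rH^\infty$-calculus by the spectral theorem. This second route even gives the calculus without a shift, with angle below $\pi/2$ because $|\mathrm{Im}\,\lambda|\lesssim|c_*|\,|\mathrm{Re}\,\lambda|^{1/2}$.

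Finally, bounded $\rH^\infty$-calculus of angle $<\pi/2$ on the Hilbert space $\rX_0$ implies maximal $\rL^p$-regularity and generation of an analytic semigroup, which gives the remaining two claims. The main obstacle I expect is the invariance of $\rL^2_{\sigmabar}(\Omegaair)$ under the weighted vertical operator $\partial_p p^2\partial_p$ and the identification of the hydrostatic Stokes operator there. This step requires the Neumann condition to kill the boundary terms $p^2\partial_p v|_{p_a}^{p_s}$ in the vertical average, and it is the only place where I go beyond a verbatim citation of \cite{GGHHK:17}.
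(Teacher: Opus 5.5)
Your argument is correct. It is more self-contained than the paper's, which gives no proof beyond declaring the lemma a direct consequence of the main result of \cite{GGHHK:17} (bounded $\rH^\infty$-calculus for the hydrostatic Stokes operator). Read literally, that result covers the constant-coefficient Laplacian with Neumann-type conditions and no drift. The citation therefore leaves implicit exactly the two points you make explicit:
\begin{itemize}
\item the weighted atmospheric operator $\DeltaH+\partial_p p^2\partial_p$ with Neumann conditions still commutes with vertical averaging, because the terms $[p^2\partial_p v]_{p_a}^{p_s}$ vanish. Hence its part in $\rL^2_{\sigmabar}(\Omegaair)$ is self-adjoint and nonpositive.
\item the drift $-c_*\cdot\nablaH$ is harmless.
\end{itemize}

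Both of your routes for the drift work. For the perturbation theorem, the relative bound is simply $\|c_*\cdot\nablaH v\|\le |c_*|\,\|(-\bar{\rA}_0)^{1/2}v\|$. Your normality route is the more elementary one. On each horizontal Fourier mode $k$, the operator is a self-adjoint vertical operator minus $\mathrm{i}\,c_*\cdot k$, so the calculus comes from the spectral theorem with an explicit angle. It also yields, as a by-product, the spectrum stated in the next lemma. The citation route, on the other hand, is the one that would extend to $\rL^q$, which the paper flags as an open direction.

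Two small corrections. First, the bound $|\mathrm{Im}\,\lambda|\lesssim|c_*|\,|\mathrm{Re}\,\lambda|^{1/2}$ alone gives no sector near $0$. You also need $|\mathrm{Re}\,\lambda|\ge|k|^2\ge 1$ for $k\neq 0$. This gives $|\mathrm{Im}\,\lambda|\le|c_*|\,|\mathrm{Re}\,\lambda|$ and the angle $\arctan|c_*|<\pi/2$. Second, $\ker\bar{\rA}_{c_*}$ is four-dimensional. So your ``no shift'' calculus must be understood for $-\bar{\rA}_{c_*}$ on the closure of its range, or after a shift, and maximal regularity holds on finite intervals only. The paper's formulation is equally loose on this point.
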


Moreover, we obtain the following spectral results for the linearization.

\begin{lem}\label{lem:Abar spectrum}
	The operator $\bar{\rA}_{c_*}$ has compact resolvent on $\rX_0$. Furthermore, we obtain 
	\begin{equation*}
		\sigma (\bar{\rA}_{c_*}) = \sigma_p(\bar{\rA}_{c_*})
		= \{0\} \, \dot{\cup} \,  
		\biggl\{ - \frac{1}{4} - \frac{(\pi m)^2}{(\log(p_a/p_s))^2} -\frac{(\pi n)^2 }{h^2} + |k|^2 + i c \cdot k \colon k \in \Z^2, \ n, m \in \N_0 \biggr\} 	
	\end{equation*}
	and the eigenspaces of $\lambda \not = 0$ are generated by
	\begin{equation}
		\binom{-k_2}{k_1} e^{i k \cdot (x,y)} 
		p^{-\frac{1}{2}} 
		\left(
			\sin\biggl(\frac{\pi m \log(p/p_s)}{\log(p_a/p_s)}\biggr)
			+ \frac{2 n \pi}{\log(p_a/p_s)} \cdot \cos\biggl(\frac{\pi m \log(p/p_s)}{\log(p_a/p_s)}\biggr)
		\right) 
		\label{eq:eigenfunctions}
	\end{equation}
	and
	\begin{equation}
		\binom{-k_2}{k_1} e^{i k \cdot (x,y)} \cos\biggl(\frac{\pi n z}{h}\biggr) .
		\label{eq:eigenfunctions2}
	\end{equation}
	Finally, the non-trivial kernel of $\bar{\rA}_{c_*}$ is given by 
	\begin{equation*}
		\ker(\bar{\rA}_{c_*})
		= \{ (c_1,c_2)^\top \colon c_1,c_2 \in \R^2 \}
	\end{equation*}  
	and has dimension four. 
\end{lem}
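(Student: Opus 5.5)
The plan is to use the block-diagonal structure of $\bar{\rA}_{c_*}$ and separation of variables, treating the atmospheric and oceanic components separately.

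\textbf{Compact resolvent and point spectrum.} The domain $\rX_1$ embeds into $\rH^2(\Omegaair)\times\rH^2(\Omegaocn)$, and the layers are bounded. By Rellich's theorem, $\rX_1$ therefore embeds compactly into $\rX_0$. Since $\bar{\rA}_{c_*}$ has nonempty resolvent set by \autoref{lem:Abar mr}, its resolvent is compact. Consequently the spectrum consists only of isolated eigenvalues of finite multiplicity, i.e. $\sigma(\bar{\rA}_{c_*})=\sigma_p(\bar{\rA}_{c_*})$.

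\textbf{Computing the eigenvalues.} I expand in horizontal Fourier modes $e^{ik\cdot(x,y)}$ on $\T^2$. The transport term $-c_*\cdot\nablaH$ acts diagonally on each mode, contributing $-ic_*\cdot k$, so it only shifts eigenvalues along the imaginary axis. In the ocean the vertical problem is $-\partial_z^2$ on $(-h,0)$ with Neumann conditions, whose eigenfunctions are $\cos(\pi n z/h)$. In the atmosphere the vertical operator $\partial_p(p^2\partial_p)$ is Euler type. Substituting $s=\log(p/p_s)$ and $u=p^{-1/2}\phi$ turns it into a constant-coefficient operator with eigenvalue shift $-\tfrac14$ and eigenvalues $-(\pi m)^2/(\log(p_a/p_s))^2$. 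The Neumann conditions $\partial_p u=0$ at $p_a,p_s$ then fix a combination of $\sin$ and $\cos$ of $\pi m s/\log(p_a/p_s)$, as in \eqref{eq:eigenfunctions}.

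\textbf{Incorporating the constraint $\divH\vbar=0$.} Within each Fourier mode this constraint splits the vectors into two types.
\begin{itemize}
\item Modes with nonzero vertical frequency ($m\ge1$, resp. $n\ge1$) have vanishing vertical mean, so the constraint is automatic. Arbitrary vectors are therefore allowed, and in particular the divergence-free direction $(-k_2,k_1)$.
\item Modes with zero vertical frequency must satisfy $k\cdot\hat v=0$, which forces the vector to be parallel to $(-k_2,k_1)$.
\end{itemize}
A subtle point is that $\bar{\rA}_{c_*}$ is the part of the Laplacian in $\rL^2_{\sigmabar}$ (by \cite{GGHHK:17}), so the pressure term drops out. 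Hence the eigenvalue on every admissible mode is just the sum of the horizontal symbol, $-|k|^2$, the transport shift $-ic_*\cdot k$, and the vertical eigenvalue. Collecting these gives the stated set. I would match sign conventions carefully here, since the displayed formula has $+|k|^2$ and would need checking.

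\textbf{Kernel.} The eigenvalue $0$ occurs only for $k=0$, $n=0$ in the ocean. In the atmosphere I would verify it directly rather than through the $-\tfrac14$ shift, since constants are Neumann eigenfunctions of $\partial_p p^2\partial_p$ with eigenvalue $0$. A direct energy argument confirms this. Pairing $\bar{\rA}_{c_*}u=0$ with $u$, the transport term is skew ($\int c_*\cdot\nablaH u\cdot u=0$ by periodicity), and the Neumann conditions leave no boundary terms. This gives $\|\nablaH u\|^2+\|p\partial_p u\|^2=0$ in the atmosphere and $\|\nabla u\|^2=0$ in the ocean. So $u$ is constant on each layer, and the two constants are independent because the linearized problem has uncoupled Neumann conditions. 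The kernel is thus $\{(c_1,c_2)\}$, of dimension four.

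\textbf{Main obstacle.} The delicate step is the atmospheric Euler-type Sturm--Liouville problem, specifically reconciling the logarithmic substitution with the Neumann conditions. Care is needed to get the exact eigenfunction normalization in \eqref{eq:eigenfunctions} and to treat the $m=0$ mode separately from the shifted branch.
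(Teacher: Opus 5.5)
Your route is the paper's: the compact embedding $D(\bar{\rA}_{c_*})\hookrightarrow\rX_0$ gives compact resolvent and $\sigma=\sigma_p$, and the rest is the ``direct calculation'' the paper leaves implicit, namely your separation of variables. Your energy argument for the kernel is correct. The step that fails is ``Collecting these gives the stated set''. Carried out correctly, your computation does \emph{not} reproduce the displayed formula, because that formula is misprinted. Since $\bar{\rA}_{c_*}$ is block diagonal with uncoupled Neumann conditions, its spectrum is the \emph{union} of an atmospheric and an oceanic part, and each eigenvalue carries exactly one vertical eigenvalue:
\[
\sigma(\bar{\rA}_{c_*})=\bigl\{-|k|^2-i c_*\cdot k+\mu_m\colon k\in\Z^2,\ m\in\N_0\bigr\}\cup\Bigl\{-|k|^2-i c_*\cdot k-\frac{\pi^2n^2}{h^2}\colon k\in\Z^2,\ n\in\N_0\Bigr\},
\]
where $\mu_0=0$ and $\mu_m=-\frac14-\pi^2m^2/(\log(p_a/p_s))^2$ for $m\geq1$. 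This corrects the display in three ways.
\begin{itemize}
\item $+|k|^2$ must read $-|k|^2$. For $k=(1,0)$, $m=n=0$ the display gives real part $\frac34$, which is impossible since $\mathrm{Re}\langle\bar{\rA}_{c_*}u,u\rangle\leq0$. The sign of $ic_*\cdot k$ is immaterial by $k\mapsto-k$.
\item The two vertical eigenvalues are never added.
\item The entry $m=0$ with shift $-\frac14$ is spurious.
\end{itemize}
You flagged exactly these spots and then asserted agreement anyway. Concretely, with $s=\log(p/p_s)$ and $\phi=e^{s/2}u$, the Neumann conditions become Robin conditions $\phi'=\frac12\phi$ at $s=0$ and $s=\log(p_a/p_s)$. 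The oscillatory branch gives $\phi\propto\sin(\omega s)+2\omega\cos(\omega s)$ with $\sin(\omega\log(p_a/p_s))=0$, i.e.\ $\omega=\pi m/\log(p_a/p_s)$, $m\geq1$. So the coefficient in \eqref{eq:eigenfunctions} is $2\pi m/\log(p_a/p_s)$, not $2\pi n/\log(p_a/p_s)$. The value $\mu=-\frac14$ admits only $\phi\equiv0$. The exponential branch contributes only $\phi=e^{s/2}$, i.e.\ $u\equiv\mathrm{const}$ with $\mu_0=0$. Hence the atmospheric modes $\binom{-k_2}{k_1}e^{ik\cdot(x,y)}$, constant in $p$, with eigenvalue $-|k|^2-ic_*\cdot k$, are missing from the display altogether.

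The eigenspace claim fails too, and your own constraint analysis shows why. For nonzero vertical frequency the vertical mean vanishes, so $\binom{k_1}{k_2}e^{ik\cdot(x,y)}\cos(\pi nz/h)$ and, for $k=0$, $\binom{a}{b}\cos(\pi nz/h)$ with $n\geq1$ (and their atmospheric analogues) lie in $D(\bar{\rA}_{c_*})$ and are eigenfunctions. They are not in the span of \eqref{eq:eigenfunctions} and \eqref{eq:eigenfunctions2}; for $k=0$ those generators vanish identically. Writing ``in particular the divergence-free direction'' hides this. Your plan therefore proves a corrected lemma, not the displayed one, and you should state the corrected spectrum and eigenspaces explicitly. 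You should also spell out the completeness step you only gesture at: expand an eigenfunction in the horizontal Fourier basis and in the complete, $\rL^2$-orthogonal Neumann eigenbases in $p$ and $z$, so that no further eigenvalues can occur. What the paper uses afterwards survives the correction: $\ker\bar{\rA}_{c_*}=\{(c_1,c_2)^\top\}$ of dimension four, $\sigma(\bar{\rA}_{c_*})\cap i\R=\{0\}$, and $\mathrm{Re}\,\lambda<0$ for every other eigenvalue.
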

\begin{proof}
	By Sobolev embedding we obtain $D(\bar{A}_c) \hookrightarrow \rL^2_{\sigmabar}(\Omegaair) \times \rL^2_{\sigmabar}(\Omegaocn)$ and therefore $\bar{A}_c$ has compact resolvent. This implies that $\sigma(\bar{A}_c) = \sigma_p(\bar{A}_c)$. The remaining part follows by direct calculations.
\end{proof}

\begin{rem}
	Note that $\dim T_{(c_*,c_*)} = \dim \sE = 2 \not = 4 = \dim \ker\bar{A}_{c_*}$ and the equilibrium $(c_*,c_*)$ is not normally stable.
	Therefore, the generalized principle of linearized stability \cite{PSZ:09,PS:16} does not apply.
	
	More precisely, $\dim \ker\bar{A}_{c_*} = 4$ shows that the dynamics has a $4$-dimensional center manifold which is a slow manifold, since $\i \R \cap \sigma(\bar{A}_{c_*}) = \{ 0 \}$. Since $\dim \sE = 2$ there is a non-trivial dynamics on the center manifold. 
	This is in sharp contrast to the situation of primitive equations.
\end{rem}

We finish this section by studying the behaviour of the orbits.

\begin{cor}[Equilibria cannot be reached in finite time]\label{cor:not finite time}
	Assume $(\vair_0,\vocn_0) \not \in \sE$, then the solution satisfies $(\vair(t),\vocn(t)) \not \in \sE$ for all $t \in [0,+\infty)$.
\end{cor}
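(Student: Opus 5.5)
We argue by contradiction, combining the strict Lyapunov property of the energy (\autoref{prop:energy lyapunov}) with the real analyticity of trajectories in time (\autoref{cor:analyticity}). Suppose $(\vair_0,\vocn_0)\notin\sE$ but $(\vair(t_0),\vocn(t_0))=(c_*,c_*)\in\sE$ for some $t_0>0$; the case $t_0=0$ is excluded by assumption. Since $(c_*,c_*)$ is an equilibrium, uniqueness of strong solutions from \autoref{thm:globalwellposed} gives $(\vair(t),\vocn(t))=(c_*,c_*)$ for all $t\ge t_0$.

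The main step is to propagate this constancy backwards in time. By \autoref{cor:analyticity}, the map $t\mapsto(\vair(t),\vocn(t))$ is real analytic on $(0,+\infty)$ with values in $\rH^1_{\sigmabar}(\Omegaatm)\times\rH^1_{\sigmabar}(\Omegaocn)$. This map coincides with the constant function $(c_*,c_*)$ on the interval $[t_0,\infty)$, which has accumulation points inside $(0,\infty)$. The identity theorem for vector-valued real analytic functions then gives $(\vair(t),\vocn(t))=(c_*,c_*)$ for every $t\in(0,\infty)$. Continuity of the semi-flow at $t=0$ in $\rH^1$ yields $(\vair_0,\vocn_0)=\lim_{t\to0^+}(\vair(t),\vocn(t))=(c_*,c_*)\in\sE$, which is the desired contradiction.

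As an alternative route, or as a consistency check, one can use the energy directly. By \autoref{prop:energy lyapunov}, $\rE$ is nonincreasing, and $\partial_t\rE=-\mathrm D\le0$. On $[t_0,\infty)$ we have $\rE\equiv\rE(c_*,c_*)$. The function $t\mapsto \rE(t)$ is real analytic on $(0,\infty)$, being the composition of the analytic trajectory with a continuous quadratic form. It is therefore constant on $(0,\infty)$, so $\mathrm D(t)=0$ there. The argument in the proof of \autoref{prop:steady states} then shows that the state is an equilibrium for each $t>0$, and letting $t\to0$ in $\rH^1$ concludes as before, using that $\sE$ is closed.

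The only delicate point is the analyticity input, which holds only on the open interval $(0,\infty)$ and not at $t=0$. The proof must therefore apply the identity theorem on $(0,\infty)$ and pass to $t=0$ by continuity of the semi-flow, rather than invoke analyticity at the initial time. Everything else is routine.
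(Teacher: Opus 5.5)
Your proposal is correct and follows essentially the same route as the paper: uniqueness freezes the trajectory at $(c_*,c_*)$ for $t \ge t_0$, and real analyticity in time plus the identity theorem then propagates this constancy to all of $(0,+\infty)$. Your explicit final step, passing to $t=0$ by continuity of the semi-flow, is left implicit in the paper. That step is needed, since the analyticity corollary only provides analyticity on the open interval $(0,+\infty)$, while the paper's proof writes $[0,+\infty)$.
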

\begin{proof}
	Assume that there exists a point in time $t_0 \in [0,+\infty)$, where $(\vair(t_0),\vocn(t_0)) \not \in \sE$. Then there exists a constant $c_* \in \R^2$ such that $(\vair(t),\vocn(t)) = (c_*,c_*)$ for all $t \in [t_0,+\infty)$. But by \autoref{cor:analyticity} the solution
	\begin{equation*}
		[0,+\infty) \to \rH^1(\Omegaair) \times \rH^1(\Omegaair) \colon t \mapsto (\vair(t),\vocn(t))
	\end{equation*}
	is real-analytic and therefore the identity theorem implies that $(\vair(t),\vocn(t)) = (c_*,c_*)$ for all $t \in (0,+\infty)$. 
\end{proof}

This result implies that the semi-flow $S(t)$ has no non-trivial recurrent points.

\begin{cor}[No recurrent points]\label{cor:no recurrence}
	Consider $(\vair_0,\vocn_0)^\top \in H_\gamma$ with $(\vair_0,\vocn_0)^\top \not \in \mathcal{E}$. Then the map
	\begin{equation*}
		[0,+\infty) 
		\to \rX_\gamma 
		\colon t \mapsto 
		(\vair(t),\vocn(t))^\top 
	\end{equation*}
	is injective. 
\end{cor}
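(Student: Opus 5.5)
We argue by contradiction, using \autoref{cor:not finite time} together with the strict Lyapunov property from \autoref{prop:energy lyapunov}. Suppose the map $t \mapsto (\vair(t),\vocn(t))^\top$ is not injective. Then there are times $0 \le t_1 < t_2$ with $(\vair(t_1),\vocn(t_1)) = (\vair(t_2),\vocn(t_2))$.

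The first step compares energies. The total energy $\rE$ depends only on the state, so $\rE(t_1) = \rE(t_2)$. By \autoref{prop:energy lyapunov},
\begin{equation*}
	0 = \rE(t_2) - \rE(t_1) = - \int_{t_1}^{t_2} \mathrm{D}(s) \d s ,
\end{equation*}
and $\mathrm{D} \ge 0$. The strong solution lies in $\rL^2(t_1,t_2;\rH^2)$, so $\mathrm{D}$ is integrable and this identity is justified. Moreover $\mathrm{D}$ is continuous on $[t_1,t_2]$, because the solution is continuous in $\rH^1$ and the trace embedding $\rH^1 \hookrightarrow \rL^3(\T^2)$ controls the boundary term. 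Hence $\mathrm{D}(s) = 0$ for every $s \in [t_1,t_2]$.

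The second step identifies the states on this interval. As in the proof of \autoref{prop:steady states}, $\mathrm{D}(s)=0$ forces $(\vair(s),\vocn(s)) = (c_s,c_s)$ for some $c_s \in \R^2$. In particular $(\vair(t_1),\vocn(t_1)) \in \sE$.

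The last step reaches the contradiction. The data $(\vair_0,\vocn_0)$ is not in $\sE$, yet the trajectory meets $\sE$ at time $t_1 \in [0,+\infty)$. This contradicts \autoref{cor:not finite time}. If $t_1 = 0$, the contradiction is immediate, since then the initial datum itself would lie in $\sE$. Hence the map is injective.

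The only delicate point is justifying pointwise vanishing of $\mathrm{D}$ from vanishing of its integral. If continuity of $\mathrm{D}$ is in doubt, it suffices to have $\mathrm{D}=0$ almost everywhere on $(t_1,t_2)$, which already gives a time in $[0,+\infty)$ at which the solution lies in $\sE$. Alternatively, real analyticity in time from \autoref{cor:analyticity} on $(0,+\infty)$ propagates the equilibrium identity to the whole interval.

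Note on the earlier proof. The argument of \autoref{cor:not finite time} is written with a typo: it assumes $(\vair(t_0),\vocn(t_0)) \not\in \sE$ where $\in \sE$ is meant. That corollary is nevertheless used here as stated.
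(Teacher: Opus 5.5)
Your proof is correct and follows essentially the paper's route: equal states at $t_1<t_2$ force the energy to be constant on $[t_1,t_2]$, the strict Lyapunov property then places the trajectory in $\sE$ at a finite time, and this contradicts \autoref{cor:not finite time}. The only difference is that the paper first uses uniqueness to make the orbit time-periodic and deduces constancy of $\rE$ on all of $[t_1,+\infty)$, whereas you get $\mathrm{D}\equiv 0$ on $[t_1,t_2]$ directly from monotonicity and continuity of $\mathrm{D}$. This is slightly more economical and spells out the step from constant energy to a pointwise equilibrium state, which the paper leaves implicit.
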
 
\begin{proof}
	We assume that there exists $t_1 < t_2$ such that $(\vair(t_1),\vocn(t_1)) = (\vair(t_2),\vocn(t_2))$.
	Uniqueness implies 
	\begin{equation*}
		(\vair(t_1+t),\vocn(t_1+ t)) = (\vair(t_2+t),\vocn(t_2+t))
		\quad \text{ for } t \geq 0  
	\end{equation*}
	and hence the solution is time-periodic. Since the energy $\rE$ is a Lyapunov functional, it implies that $\rE(t)$ is constant for $t \geq t_1$. Since the Lyapunov functional $\rE$ is strict, we see that $(\vair(t),\vocn(t))^\top \in \mathcal{E}$ for $t \geq t_1$. Hence, our solution has reached an equilibrium in finite time, which by \autoref{cor:not finite time} is only possible if $(\vair_0,\vocn_0)^\top \in \mathcal{E}$.
\end{proof}

Note that this in particular shows that the orbits are either points (if one has an equilibrium) or infinite, analytic, embedded, non-self-intersecting curves in the phase space $\rX_\gamma$.

\subsection{Structure of the Phase space}
\label{sec:phase space}

\

In this section we study the structure of the phase space of \eqref{eq:CAO}. This will allow us to identify the limit points depending on the initial data. We recall that the phase space is given by
\begin{equation*}
	\rX_{\gamma}
	= \rH^1_{\sigmabar}(\Omegaatm;\R^2)\times\rH^1_{\sigmabar}(\Omegaocn;\R^2).
\end{equation*} 

\smallskip 

We consider the \emph{total momentum} of the system is given by\footnote{Note that we have assumed for simplicity $\rhoair=1$ and $\rhoocn=1$.}
\begin{equation*}
	\mathrm{P}(t) := \mathrm{P}((\vair(t),\vocn(t)))
	:= \int_{\Omegaair} \vair(t) 
	+ \int_{\Omegaocn} \vocn(t) .
\end{equation*}
Furthermore, we introduce the \emph{total mass} of the system as
\begin{equation*}
	\mathrm{M} := \int_{\Omegaair} 1 + \int_{\Omegaocn} 1 = |\Omegaair| + |\Omegaocn|.
\end{equation*}  
Trivially, the total mass is conserved. 
Now, the \emph{averaged velocity} is given by
\begin{equation*}
	\mathrm{V}(t) := \mathrm{V}((\vair(t),\vocn(t)))
	:= \frac{\mathrm{P}(t)}{\mathrm{M}} .
\end{equation*}

\begin{prop}\label{prop:PV conserved}
	The total momentum and the averaged velocity are conserved, i.e. $\partial_t \mathrm{P}(t) = \partial_t \mathrm{V}(t) = 0$.
\end{prop}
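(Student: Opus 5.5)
Since $\mathrm{M}$ is constant, it suffices to show $\partial_t \mathrm{P}(t) = 0$; then $\partial_t \mathrm{V} = \partial_t \mathrm{P}/\mathrm{M} = 0$. The regularity of \autoref{thm:globalwellposed} gives $\partial_t \vair \in \rL^2(0,T;\rL^2)$. So $\mathrm{P}$ is absolutely continuous, and we may differentiate under the integral for a.e. $t$ (continuity then gives constancy). We insert the equations \eqref{eq:CAO}:
\begin{equation*}
\partial_t \mathrm{P} = \int_{\Omegaair} \bigl(\Delta^{\atm}\vair - \vair\cdot\nablaH\vair - \wair\partial_p\vair - \nablaH\Phi\bigr) + \int_{\Omegaocn}\bigl(\Delta\vocn - \vocn\cdot\nablaH\vocn - \wocn\dz\vocn - \nablaH\pi\bigr).
\end{equation*}
We treat the three types of terms separately.

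\emph{Pressure terms.} $\Phi$ and $\pi$ depend only on the horizontal variables, so by periodicity in $\T^2$ we get $\int_{\Omegaair}\nablaH\Phi = (p_s-p_a)\int_{\T^2}\nablaH\Phi = 0$, and likewise for $\pi$.

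\emph{Nonlinear advection terms.} We rewrite them in conservative form using the incompressibility $\divH v + \partial_z w = 0$ (resp. with $\partial_p$):
\begin{equation*}
v\cdot\nablaH v + w\,\partial_z v = \divH(v\otimes v) + \partial_z(w v).
\end{equation*}
Integrating, the horizontal divergence vanishes by periodicity. The vertical derivative gives boundary terms $w v$ on the top and bottom, and these vanish because $\wair = 0$ on $\Gammaairb\cup\Gammaairu$ and $\wocn = 0$ on $\Gab\cup\Gau$. Since strong solutions lie in $\rH^2$, all these manipulations are justified (alternatively, argue with smooth approximations).

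\emph{Diffusion terms.} In the atmosphere, $\int_{\Omegaair}\DeltaH\vair = 0$ by periodicity. The remaining part is
\begin{equation*}
\int_{\Omegaair}\partial_p(p^2\partial_p\vair) = \int_{\T^2}\bigl(p_s^2\partial_p\vair|_{\Gammaairu} - p_a^2\partial_p\vair|_{\Gammaairb}\bigr) = -p_s\int_{\T^2}|[v]|[v],
\end{equation*}
using \eqref{eq:wind bc}. In the ocean, $\int_{\Omegaocn}\Delta\vocn = \int_{\T^2}\bigl(\dz\vocn|_{\Gau} - \dz\vocn|_{\Gab}\bigr) = p_s\int_{\T^2}|[v]|[v]$. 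The two boundary contributions cancel exactly, which is where the weights $p_s^{-1}$ and $p_s$ in the coupling conditions enter. Hence $\partial_t\mathrm{P}=0$.

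The main point requiring care is the bookkeeping of the interface contributions, that is, checking that the signs and $p_s$ factors of the drag conditions produce exact cancellation. Everything else is periodicity and the no-penetration conditions on $w$.
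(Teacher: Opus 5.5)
Your proof is correct and follows essentially the same route as the paper. The paper writes both momentum equations in divergence form and applies the divergence theorem, so that only the interface fluxes $p_s^2\partial_p\vair|_{\Gammaairu}$ and $\partial_z\vocn|_{\Gau}$ survive and cancel via the drag conditions. Your term-by-term version is a more explicit bookkeeping of that same computation: periodicity removes the pressure and horizontal fluxes, $w=0$ removes the vertical advective flux, and you add the a.e.\ differentiation justification.
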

\begin{proof}
	It suffices to show that $\partial_t \mathrm{P}(t) = 0$. 
	Using the boundary conditions, we obtain by the divergence theorem
	\begin{align*}
		\partial_t \mathrm{P}(t)
		&=  \int_{\Omegaair} \partial_t \vair 
		+ \int_{\Omegaocn} \partial_t \vocn \\
		&= \int_{\Omegaair} \div (\tbinom{\nablaH \vair}{p^2 \partial_p \vair} - \uair \otimes \vair + \Phi I_3)
		+ \int_{\Omegaocn} \div (\nabla \vocn - \uocn \otimes \vocn + \pi I_3) \\
		&= \int_{\Gammaairu} (p_s^2 \cdot \partial_p \vair + \Phi e_3) 
		+ \int_{\Gau} (\partial_z \vocn + \pi e_3)  \\
		&= \int_{\Gammaairu} p_s^2 \cdot \partial_p \vair 
		+ \int_{\Gau} \partial_z \vocn 
		+ 
		\int_{\T^2} \Phi e_3
		+ 
		\int_{\T^2} \pi e_3 = 0.  
		\qedhere 
	\end{align*}
\end{proof}

\autoref{prop:PV conserved} implies that the phase space $\rX_\gamma = \rH^1_{\sigmabar}(\Omegaatm;\R^2)\times\rH^1_{\sigmabar}(\Omegaocn;\R^2)$ is a trivial fibre bundle 
\begin{equation*}
	\rX_{\gamma}
	=
	\bigsqcup_{c_* \in \R^2} \mathrm{V}^{-1}(c_*) \cong \R^2 \times \mathrm{V}^{-1}(c_*).
\end{equation*} 
Further, each fibre $\rX_{c_*} := \mathrm{V}^{-1}(c_*)$ is a affine-linear space of codimension two in $\rX_\gamma$ and contains exactly one equilibrium $(c_*,c_*)$.

\section{Asymptotics}
\label{sec:asymptotics}

In this section we study the dynamics along each fibre $\rX_{c_*} = \mathrm{V}^{-1}(c_*)$ for $c_* \in \R^2$, i.e. the restriction of the semi-flow $S(t)|_{\rX_{c_*}}$. 
Since $\rX_{c_*}$ are affine-linear spaces, we obtain the following Banach space by subtracting the reference point $(c_*,c_*)^\top$
\begin{equation}
	\rH_\gamma
	:= 
	\rX_{c_*} - (c_*,c_*)^\top 
	= 
	\left\{ \binom{\hvair}{\hvocn} \in  \rH^1_{\sigmabar}(\Omegaair) \times \rH^1_{\sigmabar}(\Omegaocn)
	\colon 
	V(\hvair,\hvocn)
	= 0
	\right\} .
	\label{eq:phase space fibre}
\end{equation} 
Hence, the evolution along a fibre is given by 
	\begin{equation}
	\left\{
	\begin{aligned} 
		\dt \hvair+ \hvair \cdot \nablaH \hvair +  c_* \cdot \nablaH \hvair + \hat{\omega} \partial_p \vair  + \nablaH \hat{\Phi}_s &= \Delta^{\air} \hvair, &&\text{ on } \Omegaair \times (0,T),  \\
		\divH \bar{\hvair} &= 0, &&\text{ on } \Omegaair \times (0,T),\\
		\dt \hvocn  + \hvocn \cdot \nablaH \hvocn + c_* \cdot \nablaH \hvocn + \hat{w} \dz \hvocn + \nablaH \hat{\pi}_s &= \Delta \hvocn, &&\text{ on } \Omegaocn \times (0,T), \\
		\divH \bar{\hvocn} &= 0, &&\text{ on } \Omegaocn \times (0,T),
	\end{aligned}
	\right. 
	\label{eq:CAOc}
	\tag{$\text{CAO}_{\text{c}}$}
\end{equation}
with $\Delta^{\air} = \DeltaH + \partial_p (p^2 \partial_p )$, and complemented by the boundary conditions
\begin{equation*} 
	\begin{aligned} 
		(\partial_p \hvair)|_{\Gammaairb} = 0 \quad &\text{ and } \quad (\partial_p \hvair)|_{\Gammaairu} = -p_s^{-1} \cdot |\hvair|_{\Gammaairu} -\hvocn|_{\Gab}|\cdot(\hvair|_{\Gammaairu} -\hvocn|_{\Gab} ) , \\ 
		(\partial_z \hvocn)|_{\Gab} = 0 \quad &\text{ and } \quad (\partial_z \vocn)|_{\Gau} = p_s \cdot  |\hvair|_{\Gammaairu} -\hvocn|_{\Gab}|\cdot  (\hvair|_{\Gammaairu} -\hvocn|_{\Gab}) .
	\end{aligned} 
\end{equation*}
as well as the initial data
\begin{equation*}
	(\hvair, \hvocn)|_{t=0} = (\hvair_0, \hvocn_0) 
\end{equation*}
on the Banach space
\begin{equation*}
	\rH_0 
	= 
	\left\{ \binom{\hvair}{\hvocn} \in  \rL^2_{\sigmabar}(\Omegaair) \times \rL^2_{\sigmabar}(\Omegaocn)
	\colon 
	\frac{\int_{\Omegaair} \hvair
	+ \int_{\Omegaocn} \hvocn}{|\Omegaair|+|\Omegaocn|} 
	= 0
	\right\} .
\end{equation*}
Note that the equilibrium $(c_*,c_*)^\top \in \rX_{c_*}$ corresponds to the equilibrium $(0,0) \in \rH_\gamma$. 
It is unique. 
The (total) linearization of \eqref{eq:CAOc} at $(0,0)$, i.e. the Frechét derivative of \eqref{eq:CAOc} at $(0,0)$ yields 
$\rA_{c_*} \colon D(\rA_{c_*}) \subset \rH_0 \to \rH_0$ given by  
\begin{equation}
	\begin{aligned}
		\rA_{c_*}
		&:=
		\begin{pmatrix}
			\DeltaH + \partial_p p^2 \partial_p - c_* \cdot \nablaH & 0 \\
			0 & \Delta - c_* \cdot \nablaH
		\end{pmatrix}
		\\
		D(\rA_{c_*})
		&:= \left\{ \binom{\hvair}{\hvocn} \in \rH^2_{\bar{\sigma}}(\Omegaair)\times \rH^2_{\bar{\sigma}}(\Omegaocn) \colon V(\hvair,\hvocn) = 0, \ \text{ and } \
		\begin{matrix} 
			\partial_n \hvair|_{\Gammaairb} = \partial_n \hvair|_{\Gammaairu} = 0 \\ \partial_n \hvocn|_{\Gau} = \partial_n \hvocn|_{\Gab} = 0
		\end{matrix} \right\} .
	\end{aligned} 
\end{equation}
Here, we have used the fact that the hydrostatic Stokes operator with Neumann boundary conditions coincide with the part of the Laplacian with Neumann boundary conditions in $\rL^2_{\sigmabar}$, see \cite{GGHHK:17}, as well as
\begin{equation*}
	\divH \bar{c_* \cdot \nablaH v} = 
	c_* \cdot \nablaH \divH \bar{v} = 0
\end{equation*}
and hence 
\begin{equation*}
	\mathbb{P} (c_* \cdot \nablaH v) = c_* \cdot \nablaH v .
\end{equation*}
We denote by $\rH_1 := D(\rA_c)$ equipped with the graph norm and point out that the trace space coincides with the phase space.

\begin{lem}\label{lem:trace space}
	The complex interpolation spaces $\rH_{\frac{1}{2}} := (\rH_0,\rH_1)_{\frac{1}{2}}$ and the real interpolation space $\rH_{\frac{1}{2},2} := (\rH_0,\rH_1)_{\frac{1}{2},2}$ coincide with the phase space \eqref{eq:phase space fibre}, i.e.
	\begin{equation*}
		\rH_{\frac{1}{2}} = \rH_{\frac{1}{2},2} = \rH_\gamma . 
	\end{equation*}	
\end{lem}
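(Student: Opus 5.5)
The plan is to reduce the statement to the corresponding identification for the unconstrained operator $\bar{\rA}_{c_*}$ on $\rX_0$ and then pass to the closed subspaces cut out by the mean-value constraint $\mathrm{V}=0$.

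\textbf{Step 1: identify the interpolation spaces of $\bar{\rA}_{c_*}$.} By \autoref{lem:Abar mr}, $\bar{\rA}_{c_*}$ has a bounded $\rH^\infty$-calculus on $\rX_0$, hence bounded imaginary powers after a shift. Therefore the complex interpolation space $(\rX_0,\rX_1)_{\frac12}$ equals $D((\mu-\bar{\rA}_{c_*})^{1/2})$ for $\mu>0$. On the Hilbert space $\rX_0$, complex and real interpolation $(\cdot,\cdot)_{\frac12,2}$ coincide, so it suffices to compute the real interpolation space. The transport term $c_*\cdot\nablaH$ is a lower-order perturbation, relatively bounded with bound $0$. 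It does not affect the domain $\rX_1$ or the interpolation spaces, so we may drop it.

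\textbf{Step 2: compute the real interpolation space.} The hydrostatic Neumann Laplacian coincides with the part in $\rL^2_{\sigmabar}$ of the Neumann Laplacian $\DeltaH+\partial_p p^2\partial_p$, respectively $\Delta$, on $\rL^2$ (the fact from \cite{GGHHK:17} recalled above). The Helmholtz-type projection $\mathbb P$ onto $\rL^2_{\sigmabar}$ commutes with this Laplacian and is bounded on $\rL^2$ and $\rH^2_N$. Hence $\rX_0$ and $\rX_1$ are complemented subspaces, via a common projection, of $\rL^2$ and $\rH^2_N$. Here $\rH^2_N$ denotes the $\rH^2$-functions with vanishing normal derivatives, and the ellipticity of $\partial_p p^2\partial_p$ follows from $p\in[p_a,p_s]$, $p_a>0$. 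The interpolation of complemented subspaces then gives $(\rX_0,\rX_1)_{\frac12,2}=\mathbb P(\rL^2,\rH^2_N)_{\frac12,2}$. The standard result for Neumann boundary conditions (Seeley/Triebel) gives $(\rL^2,\rH^2_N)_{\frac12,2}=\rH^1$, since the Neumann condition, of order one, is only visible above smoothness $3/2$. Therefore $(\rX_0,\rX_1)_{\frac12}=(\rX_0,\rX_1)_{\frac12,2}=\rH^1_{\sigmabar}(\Omegaair)\times\rH^1_{\sigmabar}(\Omegaocn)$.

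\textbf{Step 3: pass to the constraint $\mathrm{V}=0$.} The map $\mathrm{V}$ is a bounded functional on $\rX_0$, and $\bar{\rA}_{c_*}$ preserves the total mean: integrating by parts with Neumann conditions, and using periodicity for $c_*\cdot\nablaH$, gives $\mathrm{V}(\bar{\rA}_{c_*}u)=0$. Define the projection $Q u := u-\mathrm{V}(u)\,(1,1)^\top$. Since constants lie in every space involved and $\mathrm{V}((1,1)^\top)=1$, $Q$ is a bounded projection on $\rX_0$, $\rX_1$ and $\rX_\gamma$ simultaneously. Its range is $\rH_0$, $\rH_1$ and $\rH_\gamma$ respectively. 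Moreover $\rA_{c_*}$ is the restriction of $\bar{\rA}_{c_*}$ to these ranges, and its graph norm on $\rH_1$ is equivalent to the $\rX_1$-norm. This holds because the kernel of $\bar{\rA}_{c_*}$ meets $\rH_0$ only in the $\mathrm{V}$-free constants, which are controlled by the $\rL^2$ part of the norm. By the retract theorem for complemented subspaces, which is valid for both complex and real methods, $(\rH_0,\rH_1)_\theta = Q(\rX_0,\rX_1)_\theta$. Combined with Step 2, this yields $\rH_{\frac12}=\rH_{\frac12,2}=Q\rX_\gamma=\rH_\gamma$.

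\textbf{Main obstacle.} The delicate step is the commutation used in Step 2. One must check that $\mathbb P$ maps $\rH^2_N$ into itself: $\mathbb P$ only modifies the vertical average by a horizontal gradient, which is $z$-independent and so preserves Neumann traces in $z$ or $p$. One must also check that the weighted operator $\partial_p p^2\partial_p$ is uniformly elliptic, so the Neumann interpolation result applies. I would verify this directly through the eigenfunction expansion of \autoref{lem:Abar spectrum}, which diagonalizes everything explicitly.
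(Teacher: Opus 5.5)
Your proposal is correct and follows the same route as the paper: real and complex interpolation agree for the Hilbert couple $(\rH_0,\rH_1)$, and the mean-removing projection $Q$ (the paper's bounded extension $\pi_0$ of the bundle projection) is bounded on $\rX_0$ and $\rX_1$ simultaneously, so Triebel's complemented-subspace theorem gives $(\rH_0,\rH_1)_{\frac{1}{2}} = Q(\rX_0,\rX_1)_{\frac{1}{2}}$. The only addition is your Step~2, which proves $(\rX_0,\rX_1)_{\frac{1}{2}} = \rH^1_{\sigmabar}(\Omegaair)\times\rH^1_{\sigmabar}(\Omegaocn)$ via the hydrostatic Helmholtz projection and the Neumann interpolation result; the paper uses this identification without comment when it writes $\rH_\gamma = \rX_{\frac{1}{2}} \cap \rH_0$.
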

\begin{proof}
	Since $\rH_0$ and $\rH_1$ are Hilbert spaces, the real and complex interpolation spaces coincide, see \cite[Theorem 4.36]{Lun:18}. For the other equality we use the fact that the bundle projection $\pi \colon \rX_\gamma \to \rH_\gamma$ 
	extends to a bounded projection $\pi_0 \colon \rX_0 \to \rH_0$. Now, \cite[Sect. 1.17.1]{Tri:78} implies
	\begin{equation*}
		\rH_\gamma = 
		\rX_{\frac{1}{2}} \cap \rH_0 
		= (\rX_0,\rX_1)_{\frac{1}{2}} \cap \rH_0 
		= (\rX_0\cap \rH_0,\rX_1\cap \rH_0)_{\frac{1}{2}}
		= (\rH_0,\rH_1)_{\frac{1}{2}} 
		= \rH_{\frac{1}{2}} . \qedhere 
	\end{equation*}
\end{proof}
 
Furthermore, the total linearization $\rA_{c_*}$ along the fibre inherits the following functional-analytic properties from the total linearization $\bar{\rA}_{c_*}$ at $(c_*,c_*)$.

\begin{cor}\label{cor:A_c mr}
	The operator $\rA_{c_*}$ admits bounded $\rH^\infty$-calculus on $\rH_0$. In particular, it has maximal regularity on $\rH_0$ and generate a compact and analytic semigroup $(T_{\rA_{c_*}}(t))_{t \geq 0}$ on $\rH_0$.  
\end{cor}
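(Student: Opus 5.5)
The plan is to deduce the statement from \autoref{lem:Abar mr}, using that $\rA_{c_*}$ is the part of $\bar{\rA}_{c_*}$ in the closed subspace $\rH_0 \subset \rX_0$, and that this subspace is the range of a bounded projection commuting with $\bar{\rA}_{c_*}$. Define
\begin{equation*}
	\pi_0 \binom{\hvair}{\hvocn} := \binom{\hvair}{\hvocn} - \binom{\mathrm{V}(\hvair,\hvocn)}{\mathrm{V}(\hvair,\hvocn)},
\end{equation*}
which subtracts the constant averaged velocity from both components. This is a bounded projection on $\rX_0$ with range $\rH_0$. Constants are hydrostatically solenoidal, so $\pi_0$ indeed maps into $\rL^2_{\sigmabar}\times\rL^2_{\sigmabar}$.

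First I check that $\pi_0$ commutes with the resolvent of $\bar{\rA}_{c_*}$. Constants $(c,c)$ lie in $\ker \bar{\rA}_{c_*}$ by \autoref{lem:Abar spectrum}, so $\bar{\rA}_{c_*}$ maps $D(\bar{\rA}_{c_*})$ into itself under $\pi_0$. Moreover $\mathrm{V}(\bar{\rA}_{c_*} u) = 0$ for every $u \in D(\bar{\rA}_{c_*})$. To see this, integrate the divergence-form operators with homogeneous Neumann conditions, which gives zero. The transport term also integrates to zero: $\int c_*\cdot\nablaH u = 0$ by periodicity in $x,y$. Hence $\bar{\rA}_{c_*}\pi_0 = \pi_0 \bar{\rA}_{c_*} = \bar{\rA}_{c_*}$ on $D(\bar{\rA}_{c_*})$. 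Consequently $\rH_0$ reduces $\bar{\rA}_{c_*}$. The part of $\bar{\rA}_{c_*}$ in $\rH_0$ has domain $D(\bar{\rA}_{c_*})\cap \rH_0 = D(\rA_{c_*})$ and coincides with $\rA_{c_*}$.

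Next I transfer the calculus to the part. For $\lambda$ in the resolvent set we have $(\lambda-\rA_{c_*})^{-1} = (\lambda - \bar{\rA}_{c_*})^{-1}|_{\rH_0}$. The Dunford integrals defining $f(\rA_{c_*})$ are therefore restrictions of those defining $f(\bar{\rA}_{c_*})$, which yields $\|f(\rA_{c_*})\| \le \|f(\bar{\rA}_{c_*})\| \le C\|f\|_\infty$. There is one subtlety: the zero eigenvalue. By \autoref{lem:Abar spectrum} the kernel of $\bar{\rA}_{c_*}$ is $\{(c_1,c_2)\}$, and intersecting with $\rH_0$ leaves the two-dimensional space of pairs with $|\Omegaair| c_1 + |\Omegaocn| c_2 = 0$. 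So $0$ remains in the spectrum of $\rA_{c_*}$, and the calculus must be understood in the same (shifted) sense as in \autoref{lem:Abar mr}. I would phrase it for $\mu - \rA_{c_*}$ with $\mu>0$, exactly as for $\bar{\rA}_{c_*}$. Maximal regularity on the Hilbert space $\rH_0$ and analyticity of the semigroup then follow as standard consequences. They can also be read off directly, since $T_{\rA_{c_*}}(t) = T_{\bar{\rA}_{c_*}}(t)|_{\rH_0}$ and $\rH_0$ is invariant.

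Finally, compactness of the semigroup follows because it is analytic, hence norm continuous for $t>0$, and has compact resolvent. The compact resolvent is inherited from \autoref{lem:Abar spectrum} by restricting to a closed invariant subspace. The main point to handle carefully is the invariance $\mathrm{V}(\bar{\rA}_{c_*}u)=0$, including the boundary terms: for the atmospheric part the flux term $p^2\partial_p \hvair$ must vanish at both $p_a$ and $p_s$. This holds by the homogeneous Neumann conditions in $D(\bar{\rA}_{c_*})$.
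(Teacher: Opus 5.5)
Your proposal is correct and takes essentially the paper's route. The paper gives no explicit proof: it only asserts that $\rA_{c_*}$ \emph{inherits} these properties from $\bar{\rA}_{c_*}$, and the bounded projection $\pi_0\colon \rX_0 \to \rH_0$ it has in mind is the one invoked in \autoref{lem:trace space}. Your argument supplies the details the paper leaves implicit: $\pi_0$ commutes with $\bar{\rA}_{c_*}$ because constants lie in the kernel and $\mathrm{V}(\bar{\rA}_{c_*}u)=0$; resolvents and Dunford integrals restrict to $\rH_0$; the calculus is shifted to account for the zero eigenvalue; and compactness of the semigroup follows from analyticity plus compact resolvent.
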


Note that the eigenfunctions in \eqref{eq:eigenfunctions} and \eqref{eq:eigenfunctions2} have zero mean. In particular, they satisfy $V(\hvair,\hvocn) = 0$.
This is in contrast to the situation of $\lambda = 0$: there we obtain that 
\begin{equation*}
	V(c_1,c_2) =  
	\frac{c_1 \cdot |\Omegaair|
		+ c_2 \cdot |\Omegaocn| }{|\Omegaair|+|\Omegaocn|} .
\end{equation*} 
Therefore, $\rA_{c_*}$ inherits the following spectral results from $\bar{\rA}_{c_*}$.

\begin{cor}\label{cor:A_c spectrum}
	The operator $\bar{\rA}_{c_*}$ has compact resolvent on $\rH_0$. Furthermore, we obtain 
	\begin{equation*}
		\sigma (\rA_{c_*}) = \sigma_p(\rA_{c_*})
		= \sigma (\bar{A}_{c_*})
	\end{equation*}
	and the eigenspaces of $\lambda \not = 0$ are generated by the functions \eqref{eq:eigenfunctions} and \eqref{eq:eigenfunctions2}.
	Therefore, they are one-dimensional unless $\log(p_a/p_s) = l \cdot h$ for some $l \in \N$, then they are two-dimensional. 
	Finally, the non-trivial kernel of $\bar{\rA}_{c_*}$ is given by 
	\begin{equation*}
		\ker(\rA_{c_*})
		= \{ (c_1,c_2)^\top \colon c_1,c_2 \in \R^2 \text{ satisfying } c_1 \cdot |\Omegaair|
		+ c_2 \cdot |\Omegaocn| = 0 \}
	\end{equation*}  
	and has dimension two. 
\end{cor}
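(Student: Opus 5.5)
The plan is to read off everything about $\rA_{c_*}$ from the corresponding statements for $\bar{\rA}_{c_*}$ in \autoref{lem:Abar spectrum}, using that $\rH_0$ is a closed subspace of $\rX_0$ which is invariant under the resolvent of $\bar{\rA}_{c_*}$. Concretely, let $\pi_0\colon \rX_0\to\rH_0$, $\pi_0(u^\air,u^\ocn) := (u^\air - \mathrm{V}(u), u^\ocn - \mathrm{V}(u))$ be the projection along the two-dimensional space $\{(c,c)\colon c\in\R^2\}$ already used in the proof of \autoref{lem:trace space}.

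The first step is invariance. Constants lie in $\ker\bar{\rA}_{c_*}$, so $\bar{\rA}_{c_*}$ commutes with $\pi_0$ on $D(\bar{\rA}_{c_*})$. Moreover, for $u\in D(\bar{\rA}_{c_*})$ we have $\int \bar{\rA}_{c_*}u = 0$, by integrating by parts with the homogeneous Neumann conditions and periodicity in $x,y$; the transport term $c_*\cdot\nablaH$ integrates to zero as well. Hence $\mathrm{Ran}\,\bar{\rA}_{c_*}\subset \rH_0$, and $\rA_{c_*}$ is exactly the part of $\bar{\rA}_{c_*}$ in $\rH_0$.

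Consequently, for $\lambda\in\rho(\bar{\rA}_{c_*})$ the resolvent $(\lambda-\bar{\rA}_{c_*})^{-1}$ maps $\rH_0$ into $\rH_0\cap D(\bar{\rA}_{c_*}) = D(\rA_{c_*})$. Indeed, if $\lambda u - \bar{\rA}_{c_*}u = f$ with $\mathrm{V}(f)=0$, then $\lambda\mathrm{V}(u) = 0$. For $\lambda\neq0$ this gives $\mathrm{V}(u)=0$; the case $\lambda=0$ lies in the spectrum anyway. Thus $\rho(\bar{\rA}_{c_*})\subset\rho(\rA_{c_*})$, and $(\lambda-\rA_{c_*})^{-1}$ is the restriction of the compact operator $(\lambda-\bar{\rA}_{c_*})^{-1}$. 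It is therefore compact. This gives compact resolvent, and hence $\sigma(\rA_{c_*})=\sigma_p(\rA_{c_*})$.

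For the reverse inclusion of spectra, I take each eigenvalue $\lambda\neq0$ of $\bar{\rA}_{c_*}$. Its eigenfunctions \eqref{eq:eigenfunctions}, \eqref{eq:eigenfunctions2} carry a factor $e^{ik\cdot(x,y)}$ with $k\neq0$, so they are mean-free and lie in $D(\rA_{c_*})$. Conversely, any eigenfunction of $\rA_{c_*}$ is one of $\bar{\rA}_{c_*}$, so the eigenspaces coincide. For $\lambda=0$, the kernel is $\ker\bar{\rA}_{c_*}\cap\rH_0$. By \autoref{lem:Abar spectrum} this is $\{(c_1,c_2)\colon c_1|\Omegaair|+c_2|\Omegaocn|=0\}$, which has dimension $4-2=2$ because the constraint consists of two independent linear equations. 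In particular $0\in\sigma(\rA_{c_*})$, so $\sigma(\rA_{c_*})=\sigma(\bar{\rA}_{c_*})$.

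The remaining point, and the only real bookkeeping, is the dimension of the eigenspaces for $\lambda\neq0$. For fixed $k$, an eigenvalue can be attained both by an atmospheric mode (index $m$) and an oceanic mode (index $n$). This coincidence is what happens under the stated arithmetic condition relating $\log(p_a/p_s)$ and $h$, and otherwise the eigenvalue comes from a single mode. I would verify this by comparing the explicit vertical eigenvalues $\tfrac14+\pi^2m^2/\log(p_a/p_s)^2$ and $\pi^2n^2/h^2$. I expect this matching of vertical modes to be the main obstacle: it is purely computational but requires care with the explicit formula of \autoref{lem:Abar spectrum}, and possibly with the $k$-multiplicity, which I would count per fixed wave vector $k$.
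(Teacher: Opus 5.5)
Up to the dimension count, your argument is correct and follows the paper's brief reasoning. The paper only notes that eigenfunctions for $\lambda\neq0$ are mean-free and that $\mathrm{V}$ cuts the four-dimensional kernel down to two dimensions. Your observation that $\mathrm{V}(\bar{\rA}_{c_*}u)=0$, so that $\rA_{c_*}$ is the part of $\bar{\rA}_{c_*}$ in $\rH_0$, supplies the resolvent argument the paper omits. One repair is needed: do not derive mean-freeness from a factor $e^{\i k\cdot(x,y)}$ with $k\neq0$. The list \eqref{eq:eigenfunctions}, \eqref{eq:eigenfunctions2} is incomplete; for example, $(0,a\cos(\pi z/h))$ with $a\in\R^2$ is an eigenfunction for $-\pi^2/h^2$ with $k=0$. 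Your own first step already gives mean-freeness, since $\bar{\rA}_{c_*}u=\lambda u$ with $\lambda\neq0$ implies $\lambda\mathrm{V}(u)=\mathrm{V}(\bar{\rA}_{c_*}u)=0$.

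The genuine gap is the last step, and it cannot be closed, because the claimed dichotomy (one-dimensional unless $\log(p_a/p_s)=l\cdot h$, two-dimensional otherwise) is false. The paper offers no argument for it either. The linearization has homogeneous Neumann conditions in both layers, so it is block-diagonal and the multiplicities of the two layers add. (i) For every $k\in\Z^2\setminus\{0\}$, with $k^\perp=(-k_2,k_1)^\top$, both $(k^\perp e^{\i k\cdot(x,y)},0)$ and $(0,k^\perp e^{\i k\cdot(x,y)})$ are mean-free eigenfunctions for the same eigenvalue $-|k|^2-\i\, c_*\cdot k$. Your list of vertical eigenvalues $\tfrac14+\pi^2m^2/\log(p_a/p_s)^2$ omits the $p$-independent atmospheric profile, whose vertical eigenvalue is $0$ and always matches the oceanic $n=0$ one. (ii) For $n\geq1$ the vertical mean of $a\,e^{\i k\cdot(x,y)}\cos(\pi nz/h)$ vanishes, so the constraint $\mathrm{div}_{\mathrm{H}}\,\overline{v}=0$ does not fix the polarization, and every $a\in\mathbb{C}^2$ gives an eigenfunction. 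The same holds for atmospheric modes with $m\geq1$, whose vertical profiles are orthogonal to the constants. (iii) Distinct wave vectors can share an eigenvalue, e.g.\ $k=(1,\pm1)$ for $c_*=(1,0)$. Consequently every eigenspace for $\lambda\neq0$ is at least two-dimensional, for all $p_a,p_s,h$. Moreover, the coincidence condition $\tfrac14+\pi^2m^2/\log(p_a/p_s)^2=\pi^2n^2/h^2$ is not equivalent to $\log(p_a/p_s)=l\cdot h$, and that condition as written never holds, since $\log(p_a/p_s)<0<h$. What your method does prove is that each eigenspace of $\rA_{c_*}$ for $\lambda\neq0$ equals that of $\bar{\rA}_{c_*}$. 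That is, it is spanned by all atmospheric and oceanic modes (all wave vectors and admissible polarizations) with that eigenvalue. The one-/two-dimensionality sentence has to be dropped or replaced by this multiplicity count.
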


Next, we point our that the system \eqref{eq:CAOc} inherits a strict Lyapunov functional from \eqref{eq:CAO}: its (normalized) energy is given by 
\begin{equation*}
	\rE_{c_*}(t)
	:= \rE_{c_*}(\vair,\vocn)(t) 
	:= \rE_{c_*}(\vair(t),\vocn)
	:= \frac{1}{2}\int_{\Omegaatm} |\hvair(t)|^2 + \frac{1}{2}\int_{\Omegaocn} |\hvocn(t)|^2 .
\end{equation*}
We obtain the following result.

\begin{cor}\label{cor:energy along the fibre}
	The energy $\rE_{c_*}$ is a strict Lyapunov functional. Moreover, the unique equilibrium $(0,0)$ is the unique critical point of $\rE_{c_*}$. More precisely, $(0,0)$ is the global minimizer with $\rE_{c_*}(0,0) = 0$. 
\end{cor}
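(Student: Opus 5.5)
The plan is to reduce everything to the energy identity of \autoref{prop:energy lyapunov}, transported to the fibre $\rX_{c_*}$. The shifted unknowns are $\hvair = \vair - c_*$ and $\hvocn = \vocn - c_*$.

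\textbf{Step 1: the energy identity on the fibre.} Gradients and the jump $[v]$ are invariant under subtraction of the common constant $c_*$, so $\mathrm{D}$ is unchanged by the shift. Testing \eqref{eq:CAOc} with $(\hvair,\hvocn)$ gives the same computation as in \autoref{prop:energy lyapunov}, and I expect $\partial_t \rE_{c_*} = -\mathrm{D}$. The extra terms vanish for the following reasons.
\begin{itemize}
\item The transport term $c_* \cdot \nablaH \hvair$ integrates to zero against $\hvair$ by periodicity in the horizontal variables.
\item The nonlinear advection terms vanish by the usual cancellation, using $\div \hat u = 0$ and $\hat w = 0$ on the top and bottom boundaries.
\item The pressure terms vanish because $\hvair, \hvocn \in \rL^2_{\sigmabar}$ and $\Phi, \pi$ depend only on the horizontal variables, so $\int \hat v \cdot \nablaH \Phi = -\int_{\T^2} \Phi \, \divH \int \hat v \, \d p = 0$.
\item The boundary terms combine exactly as before into $-p_s \|[v]\|_{\rL^3}^3$.
\end{itemize}
Since $\mathrm{D} \ge 0$, the functional $\rE_{c_*}$ is nonincreasing along the flow, so it is a Lyapunov functional.

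\textbf{Step 2: strictness.} Suppose $\rE_{c_*}$ is constant on a time interval. Then $\mathrm{D}\equiv 0$ there, so both velocities are constant and their traces at the interface agree. As in \autoref{prop:steady states}, this forces $(\hvair,\hvocn) = (d,d)$ for some constant $d \in \R^2$.

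\textbf{Step 3: the only equilibrium on the fibre is $(0,0)$.} The fibre constraint $V(\hvair,\hvocn)=0$ gives $d\,(|\Omegaair|+|\Omegaocn|) = 0$, hence $d=0$. So states with vanishing dissipation on the fibre reduce to $(0,0)$, and strictness follows.

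\textbf{Step 4: critical points and the minimizer.} Regarded as a functional on the closed subspace $\rH_0$ (or $\rH_\gamma$), $\rE_{c_*}$ is one half the squared $\rL^2$-norm. Its derivative at $(\hvair,\hvocn)$ is $h \mapsto \langle (\hvair,\hvocn), h\rangle_{\rL^2}$. This vanishes for all $h \in \rH_\gamma$ exactly when the state is $\rL^2$-orthogonal to the dense subspace $\rH_\gamma$ of $\rH_0$ (density holds since $\rH_1 \subset \rH_\gamma$ is dense). Because the state itself lies in $\rH_0$, it must be $(0,0)$. Finally $\rE_{c_*} \ge 0$ with equality only at $(0,0)$, so $(0,0)$ is the global minimizer with $\rE_{c_*}(0,0)=0$.

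\textbf{Main obstacle.} The only delicate point is justifying the integrations by parts at the regularity of strong solutions, in particular the pressure cancellation and the boundary terms. This should follow from $\hat v \in \rL^2_t\rH^2_x$ and $\partial_t \hat v \in \rL^2_t\rL^2_x$ by \autoref{thm:globalwellposed}, which makes $t \mapsto \rE_{c_*}(t)$ absolutely continuous.
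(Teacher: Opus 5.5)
Your proposal is correct and follows the paper's route. The paper gives no separate proof: it reads the corollary off from \autoref{prop:energy lyapunov}, the characterization of equilibria in \autoref{prop:steady states}, and the fibre constraint $\mathrm{V}=0$ (which leaves only the equilibrium $(0,0)$), while the critical-point and minimizer claims are immediate because $\rE_{c_*}$ is half the squared $\rL^2$-norm. The differences are cosmetic: instead of re-testing \eqref{eq:CAOc} you may note that on the fibre $\rE = \rE_{c_*} + \tfrac{1}{2}|c_*|^2\,\mathrm{M}$, since $\int_{\Omegaair}\hvair + \int_{\Omegaocn}\hvocn = 0$, so $\partial_t \rE_{c_*} = -\mathrm{D}$ is inherited directly; and in Step 4 the density remark is unnecessary, since you can simply test the derivative with $h$ equal to the state itself.
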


\subsection{Global a-priori estimates}
\label{ssec:apriori}

\

To bridge the gap between the initial data and the neighbourhood of a steady state, we require the following a priori estimates.

\begin{lem}[Energy equality]\label{lem:energyestimates}
	Let $(\hvair,\hvocn) \in \rC([0,
	+\infty);\rH_\gamma)$ to \eqref{eq:CAOc} be the global strong solution with initial data $(\hvair_0,\hvocn_0)^\top \in \rH_\gamma$.
	Then, we have  
	\begin{align*}
		\|\hvair(t) \|^2_{\rL^2(\Omegaair)} &+ \|\hvocn(t) \|^2_{\rL^2(\Omegaocn)} + 2 \int_0^t (\| \nablaH \hvair \|^2_{\rL^2(\Omegaair)} + \| p \partial_p \hvair \|^2_{\rL^2(\Omegaair)}) \ \d s \\
		&+ 2 \int_0^t \| \nabla \hvocn \|^2_{\rL^2(\Omegaocn)} \d s + 
		\int_0^t p_s \cdot  \| [\hat{v}] \|_{\rL^3(\Gamma)}^3 \d s  
		= \| \hvair_0 \|_{\rL^2(\Omegaair)}^2 + \| \hvocn_0 \|_{\rL^2(\Omegaair)}^2 . 
	\end{align*}
\end{lem}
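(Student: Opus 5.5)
The plan is to obtain the identity from \autoref{prop:energy lyapunov} and \autoref{prop:PV conserved}, without testing \eqref{eq:CAOc} directly. Write $(\hvair,\hvocn) = (\vair - c_*, \vocn - c_*)$, where $(\vair,\vocn)$ is the global strong solution of \eqref{eq:CAO} from \autoref{thm:globalwellposed} on the fibre $\rX_{c_*}$. Expanding the squares gives
\begin{equation*}
	\rE_{c_*}(t) = \rE(t) - c_* \cdot \mathrm{P}(t) + \tfrac{1}{2}|c_*|^2 \mathrm{M} .
\end{equation*}
By \autoref{prop:PV conserved} the momentum $\mathrm{P}$ is constant in time, so $\partial_t \rE_{c_*} = \partial_t \rE$.

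Next I note that the dissipation $\mathrm{D}$ is invariant under subtracting the common constant $c_*$ from both phases. Gradients kill constants, and the jump satisfies $[\hat v] = \hvair|_{\Gammaairu} - \hvocn|_{\Gab} = [v]$. Hence \autoref{prop:energy lyapunov} gives $\partial_t \rE_{c_*}(t) = -\mathrm{D}(t)$, where $\mathrm{D}(t)$ is now written entirely in the hatted variables. Multiplying by $2$ and integrating over $(0,t)$ yields the claimed equality. I note that this computation produces the boundary term with the factor $2p_s\|[\hat v]\|^3_{\rL^3}$, matching the factor $2$ in front of the gradient terms, so the constant in front of the jump term should be read accordingly.

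The only genuine point is justifying the differentiation in time, i.e.\ the rigor behind \autoref{prop:energy lyapunov} for strong solutions. The regularity from \autoref{thm:globalwellposed}, namely $\hat v \in \rH^1(0,T;\rL^2)\cap \rL^2(0,T;\rH^2)$, implies (Lions--Magenes) that $t\mapsto \|\hat v(t)\|_{\rL^2}^2$ is absolutely continuous, with derivative $2\langle \partial_t \hat v, \hat v\rangle$ for a.e.\ $t$. The integrations by parts are legitimate for $\rH^2$ functions, and the traces are in $\rH^{3/2}(\T^2)\hookrightarrow \rL^3$.

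Within that computation, I would check three cancellations, which I expect to be the only delicate bookkeeping.
\begin{enumerate}[(i)]
	\item The advection terms $\int (u\cdot\nabla v)\cdot v$ vanish, using $\div u = 0$, $w=0$ on the horizontal boundaries and periodicity in $x,y$. The shifted transport $c_*\cdot\nablaH \hat v$ vanishes by periodicity.
	\item The pressure and geopotential terms vanish, since $\nablaH\Phi$ and $\nablaH\pi$ do not depend on the vertical variable and $\divH \vbar = 0$.
	\item The boundary terms from $\partial_p(p^2\partial_p\,\cdot)$ and $\dz$ combine, via \eqref{eq:wind bc}, into $-p_s\int_{\T^2}|[v]|^3$.
\end{enumerate}
Alternatively, one can test \eqref{eq:CAOc} with $\hat v$ directly and repeat these steps verbatim.
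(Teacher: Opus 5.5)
Your proposal is correct and follows the paper's route. The paper's proof just invokes the identity $\partial_t \rE = -\mathrm{D}$ from the proof of \autoref{prop:energy lyapunov}; your use of the conservation of $\mathrm{P}$ and the invariance of $\mathrm{D}$ under subtracting $c_*$ (or, equivalently, testing \eqref{eq:CAOc} directly) only makes the passage to the hatted variables explicit. You are also right about the constant: integrating gives $2p_s\int_0^t \|[\hat v]\|_{\rL^3(\T^2)}^3 \d s$, so the printed jump term is missing a factor $2$, and $\|\hvocn_0\|$ should be taken in $\rL^2(\Omegaocn)$.
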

\begin{proof}
This is just a reformulation of the identity shown in the proof of \autoref{prop:energy lyapunov}.
\end{proof}

Note that the $c_* \cdot \nablaH$ terms do not effect the a-priori estimates since they are of lower order compared to $\DeltaH$. 
Therefore, we obtain from \cite[Proposition 6.3]{BBHZ:25} the following estimates.  

\begin{prop}[$\rL_t^\infty \rH^{1}_{\mathbf{x}}$-$\rL^2_t \rH^{2}_{\mathbf{x}}$-Estimates] \label{prop:linftyl2}
	Let $(\hvair,\hvocn) \in \rC([0,
+\infty);\rH_\gamma)$ to \eqref{eq:CAOc} be the global strong solution with initial data $(\hvair_0,\hvocn_0)^\top \in \rH_\gamma$. Then there is a 
	continuous function $B$ on~$[0,T]$, which depends on the data $\| \hvair_0 \|_{\rH^1(\Omegaair)}$, $\| \hvocn_0 \|_{\rH^1(\Omegaocn)}$ and $t \in [0,T]$, 
	such that
	\begin{equation*}
		\| \nabla \hvair(t) \|^2_{\rL^2(\Omegaair)} + \| \nabla \hvocn(t) \|^2_{\rL^2(\Omegaocn)} 
		+ \int_{\T^2} p_s |[\hat{v}(t)]|^3
		+ \int_0^t \| \Delta \hvair(s) \|^2_{\rL^2(\Omegaair)} \d s  + \int_0^t \| \Delta \hvocn(s) \|^2_{\rL^2(\Omegaocn)} \d s  
		\leq B(t). 
	\end{equation*}
\end{prop}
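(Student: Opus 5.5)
The plan is to reduce \autoref{prop:linftyl2} to the a-priori estimates of \cite[Proposition 6.3]{BBHZ:25}, which hold for the original system \eqref{eq:CAO}. The transport terms $c_* \cdot \nablaH$ then need no separate treatment: they are absorbed in the energy-type estimates because they are lower order and skew-symmetric. The cleanest route avoids re-running those estimates. I observe that $(\hvair,\hvocn)$ solves \eqref{eq:CAOc} exactly when $(\vair,\vocn) := (\hvair + c_*, \hvocn + c_*)$ solves \eqref{eq:CAO} with initial data $(\hvair_0+c_*,\hvocn_0+c_*)$. This holds because
\begin{itemize}
\item constants do not change gradients, Laplacians, the vertical velocity given by \eqref{eq:wv}, or the pressure terms \eqref{eq:pressure} (up to the $c_*\cdot\nablaH$ transport);
\item the jump $[v]$ is invariant under adding the same constant to both phases, so the boundary conditions \eqref{eq:wind bc} are unchanged.
\end{itemize}
Consequently $\nabla \hvair = \nabla \vair$, $\Delta \hvair = \Delta \vair$ (and likewise for the ocean), and $[\hat v] = [v]$. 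Every quantity on the left-hand side of the claimed inequality therefore coincides with the corresponding quantity for the solution of \eqref{eq:CAO} given by \autoref{thm:globalwellposed}.

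The key steps, in order, are as follows.
\begin{enumerate}
\item Verify the equivalence between \eqref{eq:CAOc} and \eqref{eq:CAO} under the shift. This includes checking that $\vair\cdot\nablaH\vair = \hvair\cdot\nablaH\hvair + c_*\cdot\nablaH\hvair$, and that $\hat w = w$ by \eqref{eq:wv}.
\item Invoke \cite[Proposition 6.3]{BBHZ:25} with $p=q=2$. This gives a continuous bound $\tilde B(t)$ on $\|\nabla \vair(t)\|_{\rL^2}^2+\|\nabla\vocn(t)\|_{\rL^2}^2+\int_0^t(\|\Delta\vair\|_{\rL^2}^2+\|\Delta \vocn\|_{\rL^2}^2)$, depending on the $\rH^1$-norms of the shifted data. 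Those norms are controlled by $\|\hvair_0\|_{\rH^1}$, $\|\hvocn_0\|_{\rH^1}$ and $|c_*|$; here $|c_*|$ is itself irrelevant or bounded, because the fibre condition $V=0$ fixes the relation between $c_*$ and the original data.
\item Add the boundary term $\int_{\T^2} p_s|[\hat v(t)]|^3$. It is bounded via the trace embedding $\rH^1(\Omega)\hookrightarrow \rL^4(\partial\Omega)\hookrightarrow \rL^3(\T^2)$ on the three-dimensional layers, so $\|[\hat v]\|_{\rL^3}^3 \lesssim (\|\hvair\|_{\rH^1}+\|\hvocn\|_{\rH^1})^3$. The $\rL^2$ parts of these norms are bounded by the initial energy through \autoref{lem:energyestimates}, and the gradient parts by step 2. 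Enlarging $\tilde B$ accordingly defines $B$, which is still continuous in $t$.
\end{enumerate}

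The main obstacle is making sure that the cited estimate of \cite{BBHZ:25} really delivers an $\rL^\infty_t\rH^1_x$ bound that is pointwise in $t$ and continuous in $t$, rather than only an $\rL^2_t\rH^2_x$ bound from maximal regularity. If the reference only provides the latter, I would close the argument with the embedding $\rH^1(0,T;\rL^2)\cap\rL^2(0,T;\rH^2)\hookrightarrow \rC([0,T];\rH^1)$ stated before \autoref{cor:analyticity}, taking $B(t)$ as the supremum over $[0,t]$ of the resulting monotone bound.

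Should a direct proof be needed instead, I would test the equations with $-\Delta^{\air}\hvair$ and $-\Delta\hvocn$. The $c_*$-terms then vanish by periodicity and integration by parts, since $\int c_*\cdot\nablaH \hat v\cdot \Delta\hat v$ reduces to boundary-free terms on $\T^2$. The remaining difficulties are those of the Cao--Titi scheme with the nonlinear interface terms, which are handled exactly as in \cite{BBHZ:25}.
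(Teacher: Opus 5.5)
Your proof is correct, but it handles the one new ingredient differently from the paper. That ingredient is the cubic interface term, which is not part of \cite[Proposition 6.3]{BBHZ:25}.

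The paper reopens the proof of that proposition and quotes the differential inequality from its Step 5. There, $\frac{1}{3}\partial_t \int_{\T^2} p_s |[\hat v]|^3$ already sits on the left-hand side, next to $\partial_t(\|\nabla \vair\|^2_{\rL^2}+\|\nabla\vocn\|^2_{\rL^2})$ and $\delta(\|\Delta\vair\|^2_{\rL^2}+\|\Delta\vocn\|^2_{\rL^2})$. Since $K_3,K_4$ are integrable in time by Step 4, a single Gronwall argument bounds all terms at once, pointwise in $t$. That also settles your concern about pointwise-in-time bounds.

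You use only the \emph{statement} of \cite[Proposition 6.3]{BBHZ:25}. You then control the interface term afterwards through $\rH^1(\Omega)\hookrightarrow \rH^{1/2}(\T^2)\hookrightarrow \rL^4(\T^2)\hookrightarrow\rL^3(\T^2)$, with the $\rL^2$-parts bounded by \autoref{lem:energyestimates}. This is valid, does not depend on the internals of the cited proof, and shows the cubic term is redundant in the estimate; the paper uses the same embedding later for $\pi_{\mathrm{CL}}$. The paper's route, in turn, presents the cubic term as part of the natural $\rH^1$-level functional, which is the structure exploited in \autoref{lem:differential inequality dissipation}.

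One justification needs correcting. After the additive shift $v=\hat v+c_*$, the bound from \cite{BBHZ:25} depends on $\|\hvair_0+c_*\|_{\rH^1}+\|\hvocn_0+c_*\|_{\rH^1}$, and hence on $|c_*|$. The fibre condition does not remove this dependence: $V(\hvair_0,\hvocn_0)=0$ holds whatever $c_*$ is, so $|c_*|$ is not controlled by $\|\hvair_0\|_{\rH^1}+\|\hvocn_0\|_{\rH^1}$. For a fixed trajectory this is harmless, and that is all the later arguments use.

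To get $B$ depending only on the quantities in the statement, use the Galilean change of variables $\tilde v(t,x,y,\cdot):=\hat v(t,(x,y)+c_*t,\cdot)$ instead of the additive shift.
\begin{itemize}
\item It cancels $c_*\cdot\nablaH\hat v$ exactly.
\item It commutes with $\nablaH$, $\divH$, vertical averaging and $(-\DeltaH)^{-1}$, so the vertical velocities and pressures transform consistently.
\item It preserves the interface conditions, because both phases are translated together.
\end{itemize}
By uniqueness, $\tilde v$ is the solution of \eqref{eq:CAO} with initial datum $(\hvair_0,\hvocn_0)$, and every norm in the statement is invariant under horizontal translations on $\T^2$.

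A smaller point about your fallback direct proof: the $c_*$-terms do not vanish phase by phase. The atmosphere produces $-p_s\int_{\T^2}(c_*\cdot\nablaH\hvair)\cdot[\hat v]\,|[\hat v]|$ and the ocean produces $+p_s\int_{\T^2}(c_*\cdot\nablaH\hvocn)\cdot[\hat v]\,|[\hat v]|$. Only their sum, $-\frac{p_s}{3}\int_{\T^2}c_*\cdot\nablaH|[\hat v]|^3$, vanishes by periodicity.
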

\begin{proof} 
Note that the boundary term $\int_{\T^2} p_s |[\hat{v}(t)]|^3$ is not included in the statement of \cite[Proposition 6.3]{BBHZ:25}. We recall the last part of step 5 of the proof of \cite[Proposition 6.3]{BBHZ:25}. It is shown that
\begin{equation*}
	\begin{aligned}
		 \partial_t \bigl(\| \nabla \vair \|^2_{\rL^2(\Omegaair)} +\| \nabla \vocn \|^2_{\rL^2(\Omegaocn)} \bigr) + \delta \bigl(  \| \Delta \vair\|^2_{\rL^2(\Omegaair)} + 
		\| \Delta \vocn\|^2_{\rL^2(\Omegaocn)} \bigr) + \frac{1}{3} \dt \biggl( \int_{\T^2} p_s |[\hat{v}]|^3 \biggr) \\
		\leq K_3(t) \bigl( \left \| \nabla \vair  \right \|^2_{\rL^2(\Omegaair)} + \| \nabla \vocn \|^2_{\rL^2(\Omegaocn)} \bigr) + K_4(t),
	\end{aligned}
\end{equation*}
with $\delta >0$, and $K_3$, $K_4$ are functions depending on $\| v_z\|_{\rL^2(\Omega)}$, $\| \nabla v_z\|_{\rL^2(\Omega)}$, $\| \vbar \|_{\rH^1(G)}$, $\| \vtilde \|_{\rL^4(\Omega)}$, $\| \pi \|_{\rL^2(G)}$.
The functions $K_3$ and $K_4$ are hence integrable with respect to time by Step 4 of the proof of \cite[Proposition 6.3]{BBHZ:25}, so Gronwall's inequality yields the existence  of a continuous function $B_2$, depending on the initial data and time, such that for $t \in [0,T]$, we have
\begin{equation*}
		\| \nabla \vair(t) \|^2_{\rL^2(\Omegaair)} + \| \nabla \vocn(t) \|^2_{\rL^2(\Omegaocn)}
		+\int_0^t \| \Delta \vair \|^2_{\rL^2(\Omegaair))} \d s+ \int_0^t\| \Delta \vocn \|^2_{\rL^2(\Omegaocn))} \d s 
		+ \int_{\T^2} p_s |[\hat{v}(t)]|^3
		\leq B_2(t). \qedhere
\end{equation*}
\end{proof} 

Further, we need the following Poincaré type estimate.

\begin{prop}[Poincaré inequality]\label{prop:poincare inequalitly}
	For $(\hvair,\hvocn)^\top \in \rH_\gamma$,
	we have the following Poincaré type inequality
	\begin{equation*}
		\| \hvair \|_{\rL^2(\Omegaair)}^2
		+ 
		\| \hvocn \|_{\rL^2(\Omegaocn)}^2
		\leq 
		C \cdot \left(
		\int_{\Omegaatm} (|\nablaH \hvair|^2 
		+ |p \partial_p \hvair|^2) + \int_{\Omegaocn} |\nabla \hvocn|^2 +\left(\int_{\T^2} p_s |[\hat{v}]|^3\right)^{\frac{2}{3}} 
		\right)
	\end{equation*}
\end{prop}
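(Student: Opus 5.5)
Write $\hvair = a + \phi$ and $\hvocn = b + \psi$, where $a := |\Omegaair|^{-1}\int_{\Omegaair}\hvair$ and $b := |\Omegaocn|^{-1}\int_{\Omegaocn}\hvocn$ are the means and $\phi,\psi$ are mean-free. The constraint $V(\hvair,\hvocn)=0$ gives $|\Omegaair|\, a + |\Omegaocn|\, b = 0$. Hence $b = -\tfrac{|\Omegaair|}{|\Omegaocn|}\, a$, so $a - b = \kappa\, a$ with $\kappa := 1 + |\Omegaair|/|\Omegaocn| > 0$. Consequently
\begin{equation*}
\| \hvair \|_{\rL^2(\Omegaair)}^2 + \| \hvocn \|_{\rL^2(\Omegaocn)}^2 = \|\phi\|_{\rL^2}^2 + \|\psi\|_{\rL^2}^2 + C_\Omega |a-b|^2 .
\end{equation*}
The mean-free parts are handled by the classical Poincaré--Wirtinger inequality on the bounded Lipschitz domains $\Omegaair,\Omegaocn$. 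For the atmosphere I use that $p \in [p_a,p_s]$ with $p_a>0$, so $|\nablaH \hvair|^2 + |p\partial_p \hvair|^2 \geq \min(1,p_a^2)|\nabla\hvair|^2$ and the weighted gradient controls the full gradient.

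It remains to bound $|a-b|^2$ by the right-hand side, and this is the step where the boundary term enters. Since $a - b = [\hat v] - (\phi|_{\Gammaairu} - \psi|_{\Gab})$ pointwise on $\T^2$, averaging over $\T^2$ gives
\begin{equation*}
|a-b| \leq |\T^2|^{-1}\int_{\T^2}|[\hat v]| + |\T^2|^{-1}\int_{\T^2}\bigl(|\phi|_{\Gammaairu}| + |\psi|_{\Gab}|\bigr).
\end{equation*}
The first term is at most $C\|[\hat v]\|_{\rL^3(\T^2)}$ by Hölder, so its square is at most $C\bigl(\int_{\T^2} p_s|[\hat v]|^3\bigr)^{2/3}$. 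The second term is controlled by the trace theorem $\rH^1 \hookrightarrow \rL^2(\partial\Omega)$ applied to the mean-free functions $\phi,\psi$, followed by Poincaré--Wirtinger again, which bounds it by $C(\|\nabla\phi\|_{\rL^2}+\|\nabla\psi\|_{\rL^2})$. Squaring and adding these estimates yields the claim with a constant depending only on $p_a,p_s,h$ and $\T^2$.

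I do not expect a real obstacle here; the argument is direct. The only delicate point is the exponent $2/3$ on the cubic boundary term, which is exactly what Hölder's inequality on the bounded torus produces when passing from $\rL^1$ to $\rL^3$. Note also that the inequality is inhomogeneous: it is not invariant under scaling. An alternative would be a compactness/contradiction argument, but the scaling mismatch makes that awkward, so I will use the explicit decomposition above.
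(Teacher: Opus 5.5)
Your proof is correct, but it takes a different route from the paper.

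\textbf{The paper's route.} The paper argues by contradiction and compactness. It normalizes a violating sequence to $\|\hvair_n\|_{\rL^2}^2+\|\hvocn_n\|_{\rL^2}^2=1$ and extracts an $\rL^2$-convergent subsequence via the compact embedding $\rH^1\hookrightarrow\rL^2$. It then shows the limit consists of constants $c_1,c_2$ (the gradients vanish), that $c_1=c_2$ (the boundary term vanishes), and that $c_1=c_2=0$ (the momentum constraint). This contradicts the normalization.

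\textbf{Your route.} You split off the means and handle the fluctuations by Poincar\'e--Wirtinger. The constraint then reduces everything to the single mismatch $|a-b|$. You control it by averaging the identity $a-b=[\hat v]-(\phi|_{\Gammaairu}-\psi|_{\Gab})$ over $\T^2$, then applying the trace theorem and H\"older.

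\textbf{What each buys.} The compactness proof is shorter and needs no explicit identification of the kernel. However, its constant is non-explicit. Your argument gives an explicit constant in terms of the Poincar\'e--Wirtinger and trace constants and $|\Omegaair|,|\Omegaocn|$. It also isolates exactly where the coupling term is needed: only to pin down $a-b$. In fact your computation proves a stronger inequality, with $\|[\hat v]\|_{\rL^3(\T^2)}^2$ replaced by $|\T^2|^{-2}\bigl|\int_{\T^2}[\hat v]\bigr|^2$. It is also indifferent to which boundary component the oceanic trace is taken on, since any trace is controlled by the $\rH^1$-norm.

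\textbf{One incorrect side remark.} It is not used in your proof, but it is wrong: the inequality is not inhomogeneous. Since
\begin{equation*}
\Bigl(\int_{\T^2}p_s|[\hat v]|^3\Bigr)^{2/3}=p_s^{2/3}\,\|[\hat v]\|_{\rL^3(\T^2)}^2,
\end{equation*}
every term is quadratic, so the inequality is $2$-homogeneous. This homogeneity is exactly what justifies the paper's normalization step, so the compactness route is not awkward here. The inhomogeneity you may have had in mind belongs to the dissipation $\mathrm{D}$, where the cubic boundary term appears without the power $2/3$.
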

\begin{proof}
	We prove this by a standard contradiction argument. 
	Assuming the inequality is wrong, there have to exists a sequence $(\hvair_n,\hvocn_n)_{n \in \N}$ with 
	\begin{equation*} 
	\| \hvair_n \|_{\rL^2(\Omegaair)}^2
	+\| \hvocn_n \|_{\rL^2(\Omegaocn)}^2 = 1
	\end{equation*} 
	such that 
	\begin{equation*}
		\int_{\Omegaatm} (|\nablaH \hvair_n|^2 
		+ |p \partial_p \hvair_n|^2) \, \to 0, \quad \text{ and } \quad 
		\int_{\Omegaocn} |\nabla \hvocn_n|^2 \, \to 0, \quad \text{ and } \quad 
		\left(\int_{\T^2} p_s |\hvair_n - \hvocn_n|^3\right)^{\frac{2}{3}} \, \to 0 .
	\end{equation*} 
	This implies that $(\hvair_n,\hvocn_n)_{n \in \N}$ is bounded in $\rH^1(\Omegaair) \times \rH^1(\Omegaocn)$ and since the embeddings $\rH^1(\Omegaair) \hookrightarrow \rL^2(\Omegaair)$ and $\rH^1(\Omegaocn) \hookrightarrow \rL^2(\Omegaocn)$ are compact that there exists a subsequence (which we denote by abusing of notation again by $(\hvair_n,\hvocn_n)_{n \in \N}$) such that 
	\begin{equation*} 
	(\hvair_n,\hvocn_n) \to (\hvair,\hvocn) \quad \text{ in } \rL^2(\Omegaair) \times \rL^2(\Omegaocn).
	\end{equation*}
	We conclude 
	\begin{equation} 
	\| \hvair \|_{\rL^2(\Omegaair)}^2
	+ \| \hvocn \|_{\rL^2(\Omegaocn)}^2 = 1
	.\label{eq:norm 1}
	\end{equation} 
	
	\smallskip 
	
	On the other hand, we know that $\nabla \hvair_n \to 0$ in $\rL^2(\Omegaair)$ and $ \vocn_n \to 0$ in $\rL^2(\Omegaair)$
	which in particular implies 
	\begin{equation*}
	(\nabla \hvair_n,\nabla \hvocn_n) \to (0,0)
	\quad \text{ on } \mathcal{D}'(\Omegaair) \times \mathcal{D}'(\Omegaocn).
	\end{equation*}
	Hence, the distributional gradients of $\hvair$ and $\hvocn$ vanish, which implies, since $\Omegaair$ and $\Omegaocn$ are connected, that $\hvair = c_1$ and $\hvocn = c_2$ for constants $c_1,c_2 \in \R^2$. 
	Finally, $\hvair_n - \hvocn_n \to 0$ in $\rL^3(\T^2)$ implies $\hvair_n - \hvocn_n \to 0$ on $\mathcal{D}'(\T^2)$ and therefore
	\begin{equation*}
		c_1 = \hvair = \hvocn = c_2 \quad \text{ on } \mathcal{D}'(\T^2) 
	\end{equation*}
	and hence $c_1 = c_2$. 
	Since we are in $\rH_\gamma$, the constants have to satisfy
	\begin{equation*}
		c_1 \cdot |\Omegaair|
		+ c_2 \cdot |\Omegaocn| = 0 .
	\end{equation*}
	Combining both identities yield
	\begin{equation*}
		c_1 \cdot (|\Omegaair| + |\Omegaocn|)
		= c_1 \cdot |\Omegaair|
		+ c_2 \cdot |\Omegaocn| = 0 
	\end{equation*}
	and therefore $c_1 = c_2 = 0$, which contradicts \eqref{eq:norm 1}. 
\end{proof}

\begin{rem}
	Note that $(\hvair,\hvocn)^\top$ satisfy  $\int_\Omegaair \hvair + \int_\Omegaocn \hvocn = 0$ but they are not necessarily mean-free by themself. Therefore, the boundary term on the right-hand side is necessary. Indeed, consider $\hvair = c_1$ and $\hvocn = c_2$ with $c_1 \cdot |\Omegaair| = - c_2 |\Omegaocn|$. Then, the left-hand side is $|c_1|^2 \cdot |\Omegaair|^2 + |c_2|^2 \cdot |\Omegaocn|^2 > 0$, whereas the first-order terms of the right-hand side vanish. 
\end{rem}

Finally, using an argument from \cite{HK:16} -- see also \cite{Bin:25} for a discussion of it -- we conclude the following convergence result.  

\begin{cor}\label{cor:dissipation liminf}
	Let $(\hvair,\hvocn) \in \rC([0,
	+\infty);\rH_\gamma)$ to \eqref{eq:CAOc} be the global strong solution with initial data $(\hvair_0,\hvocn_0)^\top \in \rH_\gamma$.
	Its total energy $\rE_{c_*}$
	and 
	its dissipation $\mathrm{D}$ satisfy 
	\begin{align*}
		\lim_{t \to + \infty} \rE_{c_*}(t) &= 0, \\
		\liminf_{t \to + \infty} \mathrm{D}(t) &= 0 .
	\end{align*} 	
\end{cor}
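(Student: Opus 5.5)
The plan is to combine the energy identity of \autoref{lem:energyestimates} with the Poincaré inequality of \autoref{prop:poincare inequalitly}. There is no need for the full $\rH^1$ a-priori bound at this stage, because the energy statement only needs integrability of the dissipation.

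\textbf{Step 1: $\liminf \mathrm{D} = 0$.} By \autoref{lem:energyestimates}, $\rE_{c_*}(t)$ is nonincreasing and bounded below by zero. Moreover,
\begin{equation*}
\int_0^\infty \mathrm{D}(s) \d s \leq \rE_{c_*}(0) < \infty ,
\end{equation*}
after noting that $\mathrm{D}$ is comparable, up to the harmless factors $2$ and $1$, to the dissipative terms appearing in the energy equality. Since $\mathrm{D} \geq 0$ is integrable on $(0,\infty)$, it cannot stay above a positive constant on $[T,\infty)$ for any $T$. Hence $\liminf_{t\to\infty}\mathrm{D}(t)=0$.

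\textbf{Step 2: $\rE_{c_*}(t) \to 0$.} The monotonicity from Step 1 gives $\rE_{c_*}(t) \downarrow E_\infty \geq 0$. By Step 1 there is a sequence $t_n\to\infty$ with $\mathrm{D}(t_n)\to 0$. Apply \autoref{prop:poincare inequalitly} at $t_n$; this is legitimate because the solution stays in $\rH_\gamma$ by the conservation of momentum in \autoref{prop:PV conserved}. It yields
\begin{equation*}
2\rE_{c_*}(t_n) \leq C\bigl(\mathrm{D}(t_n) + \mathrm{D}(t_n)^{2/3}\bigr) \to 0 .
\end{equation*}
Here the boundary term is controlled because $\bigl(\int_{\T^2} p_s|[\hat v]|^3\bigr)^{2/3} \leq \mathrm{D}^{2/3}$. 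It follows that $E_\infty = \lim_n \rE_{c_*}(t_n) = 0$, and by monotonicity the full limit $\lim_{t\to\infty}\rE_{c_*}(t)=0$ holds.

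\textbf{Main obstacle.} This is only a technical point: the energy identity must be justified for the strong solution on every $[0,T]$, and the Poincaré inequality must be applied pointwise in time. Both are available, since the solution lies in $\rC([0,\infty);\rH_\gamma)$ and $\mathrm{D}(t)$ is finite for each $t$ by \autoref{prop:linftyl2}. Beyond that, the argument is the standard one from \cite{HK:16}.
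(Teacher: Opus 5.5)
Your proof is correct and is essentially the paper's argument. Integrability of $\mathrm{D}$ from the energy identity gives $\liminf_{t\to\infty}\mathrm{D}(t)=0$, and monotonicity of $\rE_{c_*}$ together with the pointwise bound $\rE_{c_*}\leq C(\mathrm{D}+\mathrm{D}^{2/3})$ from \autoref{prop:poincare inequalitly} forces the energy limit to vanish; your sequence argument in Step 2 just spells out the paper's one-line conclusion, and you are right that continuity of $\mathrm{D}$ (which the paper takes from \autoref{prop:linftyl2}) is not needed for the $\liminf$.
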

\begin{proof}
	We first show the asymptotic behaviour of the dissipation: 
	From \autoref{lem:energyestimates} and \autoref{prop:linftyl2} we know that 
	\begin{equation*}
		[0,+\infty) \to [0,+\infty) \colon t \mapsto 
		\mathrm{D}(t)
	\end{equation*}
	is $1$-integrable and continuous. This implies the claim. 
	
	\smallskip 
	
	The regularity of the solution implies that 
	\begin{equation*}
		[0,+\infty) \to [0,+\infty) \colon t \mapsto \rE_{c_*}(t)
	\end{equation*}
	is continuous. Moreover, we know from \autoref{cor:energy along the fibre} that $\rE_{c_*}$ is positive, $\rE_{c_*}(t) \geq 0$, and monotonically decreasing, $\partial_t \rE_{c_*}(t) = - \mathrm{D}(t) \leq 0$. Hence, the limit $\lim_{t \to + \infty} \rE_{c_*}(t)$ exists.  
	Now, \autoref{prop:poincare inequalitly} implies
	\begin{equation*}
		\rE_{c_*}(t) \leq C \cdot ( \mathrm{D}(t) + \mathrm{D}(t)^{\frac{2}{3}} ) 
	\end{equation*}
	pointwise in time, and the desired asymptotic behaviour of $\rE_{c_*}$ follows from the one of $\mathrm{D}$.
\end{proof}

\subsection{Trapping region}
\label{ssec:trapping}

\

In this subsection, we construct a neighbourhood of the equilibrium $(c_*,c_*)$ with the constant $c_* = \mathrm{V}(\vair_0,\vocn_0)$ in the phase space $X_\gamma$ in which the global solution remains trapped. While this follows directly from the generalized principle of linearized stability for normally stable systems like the primitive equations, the present situation is more complicated. 
Instead, we rely on the a priori estimates for the system, which allow us to control the energy and dissipation and thereby confine the solution to this neighbourhood.  

\smallskip 

We first need the following relation between the $\rL^2$-norms of the Hessians and the Laplacians.

\begin{lem}[Hessian-Laplacian identity]\label{lem:bochner formula}
	For $(\hvair,\hvocn)^\top \in \rH^2(\Omegaair) \times \rH^2(\Omegaocn)$ satisfying the boundary conditions \eqref{eq:wind bc} we have the following identity
	\begin{equation}
		\| (D^a)^2 \hvair \|_{\rL^2(\Omegaair)}^2
		+
		\| D^2 \hvocn \|_{\rL^2(\Omegaair)}^2
		= \| \Delta^a \hvair \|_{\rL^2(\Omegaair)}^2
		+ \| \Delta \hvocn \|_{\rL^2(\Omegaocn)}^2
		+ 2 p_s \int_{\T^2} | [\hat{v}] | \cdot \bigl(| [\nablaH \hat{v}] |^2+( \nablaH |[\hat{v}]| )^2\bigr),
		\label{eq:bochner}
	\end{equation}
	where $D^a := (\partial_x,\partial_y,p^2 \partial_z)^\top$.
\end{lem}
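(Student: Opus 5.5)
The identity is an integration-by-parts (Bochner-type) computation carried out separately in each layer, followed by inserting the coupling conditions \eqref{eq:wind bc} into the boundary terms. I would first prove it for smooth $(\hvair,\hvocn)$ satisfying the boundary conditions and then pass to $\rH^2$ by density. For the density step I would use that both sides are continuous in $\rH^2$: the trace maps $\rH^2 \to \rH^{3/2}(\T^2)\hookrightarrow \rL^\infty \cap \rH^1$, so the cubic boundary term is continuous.

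\textbf{Ocean layer.} Since $\Omegaocn=\T^2\times(-h,0)$ is periodic in $x,y$, I start from the pointwise identity
\begin{equation*}
|D^2 v|^2-|\Delta v|^2=\div\bigl((\nabla v)^\top\nabla v - v\,\Delta v\bigr)\text{-type expression}.
\end{equation*}
In practice I will integrate by parts term by term, $\int \partial_i\partial_j v\,\partial_i\partial_j v$ versus $\int \partial_i\partial_i v\,\partial_j\partial_j v$. Horizontal boundary terms vanish by periodicity, so only the terms involving $i=3$ or $j=3$ produce traces on $\Gab\cup\Gau$. The resulting boundary contribution is
\begin{equation*}
\int_{\partial}\bigl(\nablaH \partial_z v\cdot\nablaH v - \partial_z v\,\DeltaH v\bigr)n_z ,
\end{equation*}
which, after one tangential integration by parts on $\T^2$, becomes $2\int_{\T^2}\nablaH(\partial_z v)\cdot\nablaH v\, n_z$. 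On $\Gab$ we have $\partial_z v=0$, so that part drops. On $\Gau$ I insert $\partial_z \vocn=p_s|[\hat v]|[\hat v]$.

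\textbf{Atmosphere layer.} Here the operator is $\Delta^\air=\DeltaH+\partial_p(p^2\partial_p)$ and $D^a$ carries the weight $p^2$. I will redo the same computation with the weighted vertical derivative, checking that the cross term $\int \partial_p(p^2\partial_p v)\DeltaH v$ versus $\int |p\nablaH\partial_p v|^2$ again differs only by boundary terms. No interior commutator survives, because the weight depends only on $p$ and commutes with $\nablaH$. The boundary term on $\Gammaairu$ is $2 p_s^2\int_{\T^2}\nablaH\partial_p\vair\cdot\nablaH\vair$, into which I substitute $\partial_p\vair=-p_s^{-1}|[\hat v]|[\hat v]$; the term on $\Gammaairb$ vanishes.

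\textbf{Combining the two layers.} The signs of the outward normals and of the coupling conditions are arranged so that the sum of the two boundary terms is
\begin{equation*}
2p_s\int_{\T^2}\nablaH\bigl(|[\hat v]|[\hat v]\bigr) : \nablaH [\hat v],
\end{equation*}
which is exactly the same cancellation used in \autoref{prop:energy lyapunov}. To finish, I expand
\begin{equation*}
\partial_k(|w|w)=|w|\partial_k w+(\partial_k|w|)\,w
\end{equation*}
and use $w\cdot\partial_k w=|w|\partial_k|w|$, which gives
\begin{equation*}
\nablaH(|w|w):\nablaH w=|w|\bigl(|\nablaH w|^2+|\nablaH|w||^2\bigr).
\end{equation*}
This yields the nonnegative boundary term in \eqref{eq:bochner}.

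\textbf{Main obstacle.} I expect the hardest step to be the bookkeeping in the atmospheric layer: tracking the $p^2$ weights and the differing normal orientations so that the boundary terms pair up into $[\hat v]$ with the factor $p_s$ exactly. A secondary point is justifying the identity for $|w|w$, which is only $\rC^{1,1}$; there I would argue pointwise almost everywhere, using that $|w|w\in\rC^1$ with derivative $|w|\,\mathrm{Id}+w\otimes w/|w|$.
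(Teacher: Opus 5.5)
Your route is the same as the paper's: integrate by parts layer by layer, substitute the coupling conditions on $\Gau$ and $\Gammaairu$, and expand $\nablaH(|w|w):\nablaH w=|w|\bigl(|\nablaH w|^2+\bigl|\nablaH|w|\bigr|^2\bigr)$. However, the step where you say the signs ``are arranged'' to produce the nonnegative term in \eqref{eq:bochner} fails.

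Your two layerwise boundary terms are correct as contributions to \emph{Hessian minus Laplacian}:
\begin{itemize}
\item In the ocean, $\|D^2\hvocn\|^2-\|\Delta\hvocn\|^2=2\int_{\T^2}\nablaH\partial_z\hvocn:\nablaH\hvocn=2p_s\int_{\T^2}\nablaH(|[\hat v]|[\hat v]):\nablaH\hvocn$.
\item In the atmosphere, $\|(D^a)^2\hvair\|^2-\|\Delta^a\hvair\|^2=2p_s^2\int_{\T^2}\nablaH\partial_p\hvair:\nablaH\hvair=-2p_s\int_{\T^2}\nablaH(|[\hat v]|[\hat v]):\nablaH\hvair$.
\end{itemize}
Adding these with $[\hat v]=\hvair-\hvocn$ gives $-2p_s\int_{\T^2}\nablaH(|[\hat v]|[\hat v]):\nablaH[\hat v]\le 0$. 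So the computation actually proves
\begin{equation*}
\|\Delta^a\hvair\|_{\rL^2(\Omegaair)}^2+\|\Delta\hvocn\|_{\rL^2(\Omegaocn)}^2
=\|(D^a)^2\hvair\|_{\rL^2(\Omegaair)}^2+\|D^2\hvocn\|_{\rL^2(\Omegaocn)}^2
+2p_s\int_{\T^2}|[\hat v]|\bigl(|\nablaH[\hat v]|^2+\bigl|\nablaH|[\hat v]|\bigr|^2\bigr).
\end{equation*}
The nonnegative term sits on the Laplacian side. This is exactly the last display of the paper's own proof. The statement as printed has the term on the wrong side, and it is false whenever that boundary integral is nonzero.

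Two sanity checks confirm the sign. For a single layer with $\partial_z v=-\alpha v$ on the top and $\alpha>0$, one gets $\|\Delta v\|^2-\|D^2v\|^2=2\alpha\|\nablaH v\|_{\rL^2(\T^2)}^2\ge 0$. Your own analogy with \autoref{prop:energy lyapunov} points the same way, since there the dissipative term $p_s\|[\hat v]\|_{\rL^3}^3$ is added to the operator pairing $-\int v\cdot\Delta v$, not to the gradient side.

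So no bookkeeping of normals will land you on the displayed identity; state and prove the corrected one instead. Note that the corrected identity gives $\|(D^a)^2\hvair\|^2+\|D^2\hvocn\|^2\le\|\Delta^a\hvair\|^2+\|\Delta\hvocn\|^2$ outright, so the smallness-absorption in \autoref{cor:elliptic estimate} becomes unnecessary.

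A smaller point: for the atmospheric computation to be an exact identity, $|(D^a)^2v|^2$ must mean $|\nablaH^2 v|^2+2p^2|\nablaH\partial_p v|^2+|\partial_p(p^2\partial_p v)|^2$. The literal definition of $D^a$ would produce weights that $\Delta^a$ does not generate. Your mixed weight $p^2$ is right, but you should make the vertical--vertical entry explicit. The density argument and the a.e.\ chain rule for $|w|w$ are fine.
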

\begin{proof}
	First, we let make sure that the last term on the right-hand side is well-defined assuming the desired regularity. 
	Indeed, using the embedding
	\begin{equation*}
		\rH^{\frac{3}{2}}(\T^2) \hookrightarrow \rL^\infty(\T^2),
	\end{equation*}
	Hölder inequality and the trace theorem, we obtain
	\begin{equation}
		\begin{aligned}
			\int_{\T^2} | [\hat{v}] | \cdot | \nablaH (\hvair-\hvocn) |^2 
			&\leq \| [\hat{v}] \|_{\rL^\infty(\T^2)} \cdot \| [\hat{v}] \|_{\rH^1(\T^2)}^2 \\
			&\leq C \cdot \| [\hat{v}] \|_{\rH^{\frac{3}{2}}(\T^2)} \cdot \| [\hat{v}] \|_{\rH^1(\T^2)}^2\\
			&\leq C \cdot \bigl( \| \hvair \|_{\rH^{2}(\Omegaair)} + 
			\| \hvocn \|_{\rH^{2}(\Omegaocn)} \bigr)
			\cdot \bigl( \| \hvair \|_{\rH^{\frac{3}{2}}(\Omegaair)}^2 + 
			\| \hvocn \|_{\rH^{\frac{3}{2}}(\Omegaocn)}^2 \bigr).
		\end{aligned}
		\label{eq:estimate boundary term bochner}
	\end{equation} 
	For the other term note that $(\nablaH |\hvair-\hvocn| )^2 \leq | \nablaH (\hvair-\hvocn) |^2$ and hence it follows from the previous estimate.
	Now, integration by parts yields
	\begin{align*}
		\int_{\Omegaocn} (\Delta \hvocn) \cdot (\Delta \hvocn)
		&= \int_{\Omegaocn} \partial_i \partial^i \hvocn_k \partial_j \partial^j \hvocn_k \\
		&= - \int_{\Omegaocn} 
		\partial^i \hvocn_k \partial_i \partial_j \partial^j \hvocn_k 
		+ \int_{\Gau} \partial^i \hvocn_k \partial_j \partial^j \hvocn_k n_i \\
		&= \int_{\Omegaocn} 
		\partial_j \partial^i \hvocn_k \partial_i \partial^j \hvocn_k 
		+ \int_{\Gau} \partial^i \hvocn_k \partial_j \partial^j \hvocn_k n_i
		- \int_{\Gau} \partial^i \hvocn_k \partial_i \partial^j \hvocn_k n_j
		\\
		&= \int_{\Omegaocn} 
		\partial_j \partial^i \hvocn_k \partial_i \partial^j \hvocn_k 
		+ \int_{\Gau} \partial^i \hvocn_k \partial_\eta \partial^\eta \hvocn_k n_i
		- \int_{\Gau} \partial^\eta \hvocn_k  \partial_\eta \partial^j \hvocn_k n_j \\
		&= \int_{\Omegaocn} 
		\partial_j \partial^i \hvocn_k \partial_i \partial^j \hvocn_k 
		- 2 \int_{\Gau} \partial^\eta \hvocn_k  \partial_\eta \partial^j \hvocn_k n_j \\
		&= \int_{\Omegaocn} |D^2 \hvocn|^2 
		- 2 \int_{\T^2} (\nablaH \hvocn) \cdot  \nablaH (\partial_z \hvocn) ,
	\end{align*}
	where we have used $(\partial_z \hvocn)|_{\Gab} = 0$. Here indices $i,j,k \in \{x,y,z\}$ and $\eta \in \{x,y\}$.
	Using the non-linear boundary conditions, we obtain
	\begin{align*}
		\int_{\Omegaocn} (\Delta \hvocn) \cdot (\Delta \hvocn)
		&= \int_{\Omegaocn} |D^2 \hvocn|^2 
		- 2 \int_{\T^2} (\nablaH \hvocn) \cdot  \nablaH (\partial_z \hvocn) \\
		&= \int_{\Omegaocn} |D^2 \hvocn|^2 
		- 2 p_s \int_{\T^2} (\nablaH \hvocn) \cdot \nablaH (|\hvair-\hvocn| \cdot (\hvair-\hvocn)) . 
	\end{align*}
	By a similar calculation, we obtain
	\begin{align*}
		\int_{\Omegaair} (\Delta^a \hvair)\cdot  (\Delta^a \hvair)
		&= \int_{\Omegaair} |(D^a)^2 \hvair|^2 
		+ 2 p_s \int_{\T^2} (\nablaH \hvair) \cdot \nablaH (|\hvair-\hvocn| \cdot (\hvair-\hvocn)) .
	\end{align*}
	Adding both equalities yields 
	\begin{align*}
		\int_{\Omegaair} |\Delta^a \hvair|^2
		&+
		\int_{\Omegaocn} |\Delta \hvocn|^2
		= \int_{\Omegaair} |(D^a)^2 \hvair|^2 
		+ \int_{\Omegaocn} |D^2 \hvocn|^2
		+ 2 p_s \int_{\T^2} \nablaH (\hvair-\hvocn) \cdot \nablaH (|\hvair-\hvocn| \cdot (\hvair-\hvocn))\\
		&= \int_{\Omegaair} |(D^a)^2 \hvair|^2 
		+ \int_{\Omegaocn} |D^2 \hvocn|^2
		+ 2 p_s \int_{\T^2} | [\hat{v}] | \cdot \bigl(| \nablaH (\hvair-\hvocn) |^2+( \nablaH |\hvair-\hvocn| )^2\bigr) .
		\qedhere 
	\end{align*}
\end{proof}

Note that the last term on the right-hand side can be controlled by the $\rH^2$-norms but the powers do not match. This is an artefact of the non-linearity of the coupling conditions. Nevertheless, we obtain equivalence of the $\rH^2$-norm and the graph norm of the Laplacians provided that the $\rH^1$-norms are sufficiently small.

\begin{cor}[Elliptic estimate]\label{cor:elliptic estimate}
	Let $(\hvair,\hvocn)^\top \in \rH^2(\Omegaair) \times \rH^2(\Omegaocn)$ with boundary conditions \eqref{eq:wind bc}. 
	Assume that 
	\begin{equation*}
		\| \hvair \|_{\rH^1(\Omegaair)} + \| \hvocn \|_{\rH^1(\Omegaocn)} <\varepsilon 
	\end{equation*}
	with $\varepsilon > 0$ sufficiently small. 
	Then
	we obtain the a-priori estimate 
	\begin{equation*}
		\| \hvair \|_{\rH^2(\Omegaair)}^2
		+ \| \hvocn \|_{\rH^2(\Omegaocn)}^2  
		\leq C \cdot \bigl( 
		\| \Delta^a \hvair \|_{\rL^2(\Omegaair)}^2
		+ \| \hvair \|_{\rL^2(\Omegaair)}^2
		+ \| \Delta \hvocn \|_{\rL^2(\Omegaocn)}^2
		+ \| \hvocn \|_{\rL^2(\Omegaocn)}^2 \bigr) .   
	\end{equation*}
\end{cor}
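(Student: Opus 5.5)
The idea is to read \autoref{lem:bochner formula} as a control of the full Hessians by the Laplacians plus a boundary term. The boundary term is then shown to be small compared with the $\rH^2$-norm when the $\rH^1$-norm is small, so it can be absorbed. Throughout, $C$ denotes a generic constant depending only on $p_a,p_s,h$.

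\textbf{Step 1: from Hessians to $\rH^2$-norms.} Since $0<p_a\le p\le p_s$, the weights in $D^a$ and in $\Delta^a=\DeltaH+\partial_p(p^2\partial_p)$ are bounded above and below. Differentiating the weights only produces first-order terms. Hence
\begin{equation*}
\| \hvair \|_{\rH^2(\Omegaair)}^2 \le C\bigl( \|(D^a)^2\hvair\|_{\rL^2(\Omegaair)}^2 + \|\hvair\|_{\rH^1(\Omegaair)}^2\bigr),
\end{equation*}
and the analogous (unweighted) estimate holds for $\hvocn$ with $D^2$. Inserting \eqref{eq:bochner} gives
\begin{equation*}
\| \hvair \|_{\rH^2}^2+\| \hvocn \|_{\rH^2}^2 \le C\Bigl(\|\Delta^a\hvair\|_{\rL^2}^2+\|\Delta\hvocn\|_{\rL^2}^2 + \|\hvair\|_{\rH^1}^2+\|\hvocn\|_{\rH^1}^2 + \mathcal{B}\Bigr),
\end{equation*}
where $\mathcal{B}:=2p_s\int_{\T^2}|[\hat v]|\bigl(|[\nablaH\hat v]|^2+(\nablaH|[\hat v]|)^2\bigr)$. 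The point is that $\mathcal{B}$ enters with a positive sign on the Hessian side, so it must be bounded from above.

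\textbf{Step 2: smallness of the boundary term.} By \eqref{eq:estimate boundary term bochner} and $(\nablaH|[\hat v]|)^2\le|[\nablaH \hat v]|^2$,
\begin{equation*}
\mathcal{B}\le C\,\bigl(\|\hvair\|_{\rH^2}+\|\hvocn\|_{\rH^2}\bigr)\bigl(\|\hvair\|_{\rH^{3/2}}^2+\|\hvocn\|_{\rH^{3/2}}^2\bigr).
\end{equation*}
The interpolation inequality $\|u\|_{\rH^{3/2}}^2\le C\|u\|_{\rH^1}\|u\|_{\rH^2}$ on each layer then yields
\begin{equation*}
\mathcal{B}\le C\bigl(\|\hvair\|_{\rH^1}+\|\hvocn\|_{\rH^1}\bigr)\bigl(\|\hvair\|_{\rH^2}^2+\|\hvocn\|_{\rH^2}^2\bigr)\le C\varepsilon\,\bigl(\|\hvair\|_{\rH^2}^2+\|\hvocn\|_{\rH^2}^2\bigr).
\end{equation*}
This is exactly where the mismatch of powers noted after \autoref{lem:bochner formula} (cubic in $\mathcal{B}$ versus quadratic on the left) becomes harmless. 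The extra power is paid by the small $\rH^1$-norm.

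\textbf{Step 3: absorption.} Interpolation together with Young's inequality gives $\|u\|_{\rH^1}^2\le\delta\|u\|_{\rH^2}^2+C_\delta\|u\|_{\rL^2}^2$. Combining this with Steps 1 and 2, choose first $\delta$ and then $\varepsilon$ so small that the total coefficient $C(\delta+\varepsilon)$ in front of $\|\hvair\|_{\rH^2}^2+\|\hvocn\|_{\rH^2}^2$ on the right is at most $\tfrac12$. Absorbing this into the left-hand side gives the claim.

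The main obstacle is Step 2. It requires checking carefully that the trace and Sobolev-embedding chain in \eqref{eq:estimate boundary term bochner} really produces one $\rH^2$ factor and two $\rH^{3/2}$ factors, so that interpolation gains a full power of the $\rH^1$-norm. A secondary bookkeeping issue is the lower-order terms coming from the $p$-dependent weights in Step 1, but these are of first order and are absorbed in the same way as in Step 3.
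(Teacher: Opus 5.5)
Your proposal is correct and is essentially the paper's proof. You insert \eqref{eq:bochner}, bound the boundary term via \eqref{eq:estimate boundary term bochner} and the interpolation $\|u\|_{\rH^{3/2}}^2\le C\|u\|_{\rH^1}\|u\|_{\rH^2}$ by $C\varepsilon\bigl(\|\hvair\|_{\rH^2}^2+\|\hvocn\|_{\rH^2}^2\bigr)$, and absorb; your Steps 1 and 3 only make explicit the lower-order $\rH^1$ bookkeeping (the weights, and interpolation against $\rL^2$) that the paper leaves implicit.

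As a side remark, the final display in the paper's proof of \autoref{lem:bochner formula} places the nonnegative term $\mathcal{B}$ on the Laplacian side rather than the Hessian side, and a direct computation agrees. That would make the absorption step unnecessary, but your inequality ``Hessian $\le$ Laplacian $+\,\mathcal{B}$'' holds under either sign, so your argument is unaffected.
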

\begin{proof}
	Using the estimates from \eqref{eq:estimate boundary term bochner} and interpolation we obtain
	\begin{align*}
		\int_{\T^2} | [\hat{v}] | \cdot | \nablaH (\hvair-\hvocn) |^2 
		&\leq C \cdot \bigl( \| \hvair \|_{\rH^{2}(\Omegaair)} + 
		\| \hvocn \|_{\rH^{2}(\Omegaocn)} \bigr)
		\cdot \bigl( \| \hvair \|_{\rH^{\frac{3}{2}}(\Omegaair)}^2 + 
		\| \hvocn \|_{\rH^{\frac{3}{2}}(\Omegaocn)}^2 \bigr) \\
		&\leq C \cdot \bigl( \| \hvair \|_{\rH^{1}(\Omegaair)} + 
		\| \hvocn \|_{\rH^{1}(\Omegaocn)} \bigr)
		\cdot \bigl( \| \hvair \|_{\rH^{2}(\Omegaair)}^2 + 
		\| \hvocn \|_{\rH^{2}(\Omegaocn)}^2 \bigr) .
	\end{align*}
	If $\varepsilon > 0$ is sufficiently large, we obtain
	\begin{equation*}
		2 p_s \int_{\T^2} | [\hat{v}] | \cdot | \nablaH (\hvair-\hvocn) |^2 
		\leq \frac{1}{4} \cdot \bigl( \| \hvair \|_{\rH^{2}(\Omegaair)}^2 + 
		\| \hvocn \|_{\rH^{2}(\Omegaocn)}^2 \bigr) .
	\end{equation*}
	Finally, the other term follows from $(\nablaH |\hvair-\hvocn| )^2 \leq | \nablaH (\hvair-\hvocn) |^2$ and we conclude the claim by absorbing into the left-hand side of \eqref{eq:bochner}. 
\end{proof}

Note that the $\| \hvair \|_{\rL^2(\Omegaair)}^2$ and $\| \hvocn \|_{\rL^2(\Omegaocn)}^2$ cannot be dropped due to constants. Even, assuming $V(\hvair,\hvocn) = 0$ does not yield to a stronger estimate, since there are still constants left. 
In order, to handle them, we need the non-linear boundary conditions \eqref{eq:wind bc}. We obtain the following non-linear elliptic estimate. 

\begin{lem}[Non-linear elliptic estimate]\label{lem:elliptic estimate 2}
	Let $(\hvair,\hvocn)^\top \in \rH^2(\Omegaair) \times \rH^2(\Omegaocn)$ with boundary conditions \eqref{eq:wind bc} and $V(\hvair,\hvocn)  = 0$. 
	Assume that 
	\begin{equation*}
		\| \hvair \|_{\rH^1(\Omegaair)}^2 + \| \hvocn \|_{\rH^1(\Omegaocn)}^2 + \| [\hat{v}] \|_{\rL^3(\T^2)}^3 < \varepsilon 
	\end{equation*}
	with $\varepsilon > 0$ sufficiently small. 
	Then
	we obtain the a-priori estimate 
	\begin{equation*}
		\| \Delta^a \hvair \|_{\rL^2(\Omegaair)}^2
		+ \| \Delta \hvocn \|_{\rL^2(\Omegaocn)}^2  
		\geq c_0 \cdot ( \| \nabla^a \hvair \|_{\rL^2(\Omegaair)}^2 + \| \nabla \hvocn \|_{\rL^2(\Omegaocn)}^2 + \| [\hat{v}] \|_{\rL^3(\T^2)}^3 )^{\frac{4}{3}} .
	\end{equation*}
\end{lem}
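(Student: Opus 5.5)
Write $\mathrm{G} := \| \nabla^a \hvair \|_{\rL^2(\Omegaair)}^2 + \| \nabla \hvocn \|_{\rL^2(\Omegaocn)}^2$, $\mathrm{J} := \| [\hat{v}] \|_{\rL^3(\T^2)}^3$, and $\mathrm{L} := \| \Delta^a \hvair \|_{\rL^2(\Omegaair)}^2 + \| \Delta \hvocn \|_{\rL^2(\Omegaocn)}^2$. The plan is to bound $\mathrm{G}^{4/3}$ and $\mathrm{J}^{4/3}$ separately by $C\,\mathrm{L}$. The gradient part is linear; the boundary part uses the nonlinear coupling condition, and this is where the exponent $\frac43$ comes from.

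\emph{Gradient part.} Integrate by parts, testing $\Delta^a \hvair$ with $\hvair$ and $\Delta \hvocn$ with $\hvocn$, exactly as in the proof of \autoref{prop:energy lyapunov}:
\begin{equation*}
	\int_{\Omegaair} \hvair\cdot \Delta^a \hvair + \int_{\Omegaocn} \hvocn \cdot \Delta \hvocn = - \mathrm{G} - p_s \mathrm{J}.
\end{equation*}
Hence $\mathrm{G} + p_s \mathrm{J} \leq \|\hat v\|_{\rL^2}\, \mathrm{L}^{1/2}$. By \autoref{prop:poincare inequalitly}, $\|\hat v\|_{\rL^2}^2 \leq C(\mathrm{G} + \mathrm{J}^{2/3})$. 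Since $\mathrm{J}^{2/3} \geq \mathrm{J}$ and $\mathrm{G} \leq \mathrm{G}^{2/3}$ when both are small, set $X := \mathrm{G} + \mathrm{J}$. Then
\begin{equation*}
	X \leq C (\mathrm{G} + \mathrm{J}^{2/3})^{1/2} \mathrm{L}^{1/2} \leq C X^{1/3} \mathrm{L}^{1/2},
\end{equation*}
where the last step uses $\mathrm{G} + \mathrm{J}^{2/3} \leq 2 X^{2/3}$ for $X \leq 1$. It follows that $X^{2/3} \leq C \mathrm{L}^{1/2}$, i.e. $X^{4/3} \leq C\,\mathrm{L}$, which is the claim. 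So the whole argument reduces to the energy identity combined with the Poincaré inequality, and the smallness assumption is used only to compare powers ($\mathrm{G} \leq \mathrm{G}^{2/3}$, $\mathrm{J} \leq \mathrm{J}^{2/3}$, since $\varepsilon<1$).

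\emph{Alternative route for $\mathrm{J}$, if a more robust argument is wanted.} Write $\hat v = m + (\hat v - m)$ on each layer, with $m^a$, $m^o$ the layer means. Control $\hvair - m^a$ and $\hvocn - m^o$ in $\rH^1$ by $C\,\mathrm{L}^{1/2}$, using \autoref{cor:elliptic estimate} together with the standard Neumann-type Poincaré inequality on each layer. The boundary condition gives $\|\partial_z \hvocn|_{\Gau}\|_{\rL^{3/2}} = p_s \| [\hat v]\|_{\rL^3}^2 = p_s \mathrm{J}^{2/3}$. By the trace theorem this is $\leq C\| \hvocn\|_{\rH^2} \lesssim \mathrm{L}^{1/2} + \|\hat v\|_{\rL^2}$. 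The lower-order $\rL^2$ term is removed by the previous step. This again yields $\mathrm{J}^{4/3} \leq C \mathrm{L}$.

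\emph{Main obstacle.} The only delicate point is the bookkeeping of the powers of small quantities. I expect the first argument to suffice, because the Poincaré inequality is of exactly the degree $\frac23$ needed: the smallest power on its right-hand side, $\mathrm{J}^{2/3}$, gives the worst-case exponent $\frac{4}{3}$. I will verify that no circularity arises from the $\|\hat v\|_{\rL^2}$ term. It is bounded via \autoref{prop:poincare inequalitly} purely in terms of $\mathrm{G}$ and $\mathrm{J}$, which is legitimate because $V(\hvair,\hvocn)=0$.
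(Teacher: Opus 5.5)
Your main argument is correct and is essentially the paper's proof. Testing with $\hat v$ gives $\mathrm{G}+p_s\mathrm{J}\le\|\hat v\|_{\rL^2}\,\mathrm{L}^{1/2}$, and \autoref{prop:poincare inequalitly} together with smallness closes it. The paper uses Young's inequality $ab\le C a^{3/2}+\varepsilon b^3$ and absorbs $\|\hat v\|_{\rL^2}^3\le C(\mathrm{G}+\mathrm{J})$, whereas you bound $\|\hat v\|_{\rL^2}\le C X^{1/3}$ and divide; this is the same bookkeeping.

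The optional ``alternative route'' is unnecessary and, as written, does not close. Feeding $\|\hat v\|_{\rL^2}\lesssim X^{1/3}\lesssim \mathrm{L}^{1/4}$ into your trace bound only yields $\mathrm{J}^{8/3}\lesssim\mathrm{L}$. To get $\mathrm{J}^{4/3}\lesssim\mathrm{L}$ you would need a mean-free estimate $\|\hat v-m\|_{\rH^2}\lesssim\mathrm{L}^{1/2}$, which \autoref{cor:elliptic estimate} does not provide, so drop that route.
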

\begin{proof}
	Multiplying by $\hvair$ and $\hvocn$, integrating by parts, Cauchy-Schwarz inequality and Young's inequality yields
	\begin{align*}  
		\ \| \nabla^a \hvair \|_{\rL^2(\Omegaair)}^2 + \| \nabla \hvocn \|_{\rL^2(\Omegaocn)}^2 + p_s \cdot \| [\hat{v}] \|_{\rL^3(\T^2)}^3
		= -\int_{\Omegaair} \Delta^a \hvair \hvair -\int_{\Omegaocn} \Delta \hvocn \hvocn \\
		\leq \, C \cdot 
		(\| \Delta^a \hvair \|_{\rL^2(\Omegaair)}
		+ \| \Delta \hvocn \|_{\rL^2(\Omegaocn)})^{\frac{3}{2}}
		+ \varepsilon \cdot 
		(\| \hvair \|_{\rL^2(\Omegaair)}
		+ \| \hvocn \|_{\rL^2(\Omegaocn)})^3  
	\end{align*}
	for every $\varepsilon > 0$.
	Now, the Poincaré inequality from \autoref{prop:poincare inequalitly} implies
	\begin{align*}
		\| \hvair \|_{\rL^2(\Omegaair)}^3
		+ \| \hvocn \|_{\rL^2(\Omegaocn)}^3
		&\leq 
		(\| \hvair \|_{\rL^2(\Omegaair)}^2
		+ \| \hvocn \|_{\rL^2(\Omegaocn)}^2)^{\frac{3}{2}} \\
		&\leq 
		C \cdot \left(
		\| \nabla^a \hvair \|_{\rL^2(\Omegaair)}^2 +
		\| \nabla \hvocn \|_{\rL^2(\Omegaair)}^2 +
		p_s \cdot \| [\hat{v}] \|_{\rL^3(\T^2)}^2
		\right)^{\frac{3}{2}} \\
		&\leq 
		C \cdot \left(
		\| \nabla^a \hvair \|_{\rL^2(\Omegaair)}^3 +
		\| \nabla \hvocn \|_{\rL^2(\Omegaair)}^3 +
		p_s \cdot	\| [\hat{v}] \|_{\rL^3(\T^2)}^3
		\right) \\
		&\leq 
		C \cdot \left(
		\| \nabla^a \hvair \|_{\rL^2(\Omegaair)}^2 +
		\| \nabla \hvocn \|_{\rL^2(\Omegaair)}^2 +
		p_s \cdot \| [\hat{v}] \|_{\rL^3(\T^2)}^3
		\right),
	\end{align*}
	where we have used in the last line that $\| \nabla^a \hvair \|_{\rL^2(\Omegaair)} \leq 1$ and $\| \nabla \hvocn \|_{\rL^2(\Omegaocn)} \leq 1$.
	By choosing $\varepsilon> 0$ sufficiently small, we can absorb the second term on the right-hand side and obtain
	\begin{align*}
		\ \| \nabla^a \hvair \|_{\rL^2(\Omegaair)}^2 + \| \nabla \hvocn \|_{\rL^2(\Omegaocn)}^2 + p_s \cdot  \| [\hat{v}] \|_{\rL^3(\T^2)}^3 
		&\leq \, C \cdot 
		(\| \Delta^a \hvair \|_{\rL^2(\Omegaair)}
		+ \| \Delta \hvocn \|_{\rL^2(\Omegaocn)})^{\frac{3}{2}} \\
		&\leq \, C \cdot 
		(\| \Delta^a \hvair \|_{\rL^2(\Omegaair)}^2
		+ \| \Delta \hvocn \|_{\rL^2(\Omegaocn)}^2)^{\frac{3}{4}} .
		\qedhere 
	\end{align*}
\end{proof}

\begin{lem}[Differential inequality of the Dissipation]\label{lem:differential inequality dissipation}
	Let $(\hvair,\hvocn) \in \rC([0,
	+\infty);\rH_\gamma)$ to \eqref{eq:CAOc} be the global strong solution with initial data $(\hvair_0,\hvocn_0)^\top \in \rH_\gamma$.
	Then the dissipation functional
	\begin{equation*}
		\mathrm{D}(t)
		:= \| \nabla^a \hvair(t) \|_{\rL^2(\Omegaair)}^2
		+ 
		\| \nabla \hvocn(t) \|_{\rL^2(\Omegaocn)}^2 
		+ \frac{p_s}{3} \int_{\T^2} |[\hat{v}](t)|^3
	\end{equation*}
	satisfies
	\begin{align*}
		&\frac{1}{2} \partial_t \mathrm{D}(t)
		+ ( \| \Delta^a \hvair \|_{\rL^2(\Omegaair)}^2  +\| \Delta \hvocn \|_{\rL^2(\Omegaocn)}^2  ) \\
		\leq \ 
		&C \cdot (\|\hvair \|_{\rH^1(\Omegaair)}^2  +\| \hvocn \|_{\rH^1(\Omegaocn)}^2 ) \cdot ( \| \Delta^a \hvair \|_{\rL^2(\Omegaair)}^2  +\| \Delta \hvocn \|_{\rL^2(\Omegaocn)}^2  + \| \hvair \|_{\rL^2(\Omegaair)}^2  +\| \hvocn \|_{\rL^2(\Omegaocn)}^2 ) .
	\end{align*}
\end{lem}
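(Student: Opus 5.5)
We test the equations \eqref{eq:CAOc} with $-\Delta^a \hvair$ and $-\Delta \hvocn$, respectively, and add the two identities. This is the classical $\rH^1$-energy estimate, but now we track the boundary terms exactly.

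\textbf{Step 1: the time derivative.} Integrating $\int_{\Omegaair} \dt \hvair \cdot (-\Delta^a \hvair)$ by parts produces $\frac12 \partial_t \| \nabla^a \hvair \|^2_{\rL^2}$. It also produces the boundary term $-p_s^2\int_{\T^2} \dt \hvair \cdot \partial_p \hvair|_{\Gammaairu}$, which by \eqref{eq:wind bc} equals $p_s \int_{\T^2} \dt \hvair \cdot |[\hat v]|[\hat v]$. The analogous term from the ocean is $-p_s\int_{\T^2}\dt\hvocn\cdot|[\hat v]|[\hat v]$. Summing, we obtain
\begin{equation*}
p_s\int_{\T^2} \dt[\hat v]\cdot |[\hat v]|[\hat v] = \frac{p_s}{3}\,\dt \int_{\T^2}|[\hat v]|^3 .
\end{equation*}
This is exactly why $\mathrm D$ contains the factor $\frac{p_s}{3}$. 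The left-hand side therefore becomes $\frac12\partial_t\mathrm D$, up to harmless constant normalisations, plus $\|\Delta^a\hvair\|^2+\|\Delta\hvocn\|^2$ coming from the diffusion term. The pressure terms $\nablaH\hat\Phi$ and $\nablaH\hat\pi$ do not vanish against $\Delta\hat v$, because the Neumann data are inhomogeneous. We therefore keep them and estimate them in Step 3.

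\textbf{Step 2: transport terms.} The term $c_*\cdot\nablaH \hat v$ tested with $\Delta \hat v$ integrates to a pure boundary term. On the torus it becomes $\int_{\T^2} c_*\cdot\nablaH \hat v\cdot \partial_n\hat v$, whose combined contribution is $p_s\int_{\T^2} (c_*\cdot\nablaH[\hat v])\cdot|[\hat v]|[\hat v]= \frac{p_s}{3}\int_{\T^2} c_*\cdot\nablaH|[\hat v]|^3=0$. For the genuinely nonlinear terms $\hat v\cdot\nablaH\hat v$ and $\hat w\,\partial_z\hat v$ we use the anisotropic estimates of \cite{CT:07,BBHZ:25}: Hölder, the representation \eqref{eq:wv} of $\hat w$, and Gagliardo--Nirenberg/Ladyzhenskaya-type inequalities. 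These give
\begin{equation*}
\Bigl|\int \hat v\cdot\nablaH\hat v\cdot\Delta\hat v\Bigr|+\Bigl|\int \hat w\,\partial_z\hat v\cdot\Delta\hat v\Bigr|\le C\|\hat v\|_{\rH^1}\|\hat v\|_{\rH^2}^{2}.
\end{equation*}
The $\rH^2$-norm is then replaced by the graph norm via \autoref{cor:elliptic estimate}, which is valid as long as the $\rH^1$-norm is small. This gives the bound $C\|\hat v\|_{\rH^1}(\|\Delta\hat v\|^2+\|\hat v\|^2)$. Since $\|\hat v\|_{\rH^1}\le \varepsilon$ in the trapping regime, this is dominated by the claimed $\|\hat v\|^2_{\rH^1}$-weighted right-hand side. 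If a first power of $\|\hat v\|_{\rH^1}$ is all one obtains, one should argue more carefully. Here I would use $\|\hat v\|_{\rL^4}\|\nabla\hat v\|_{\rL^4}\|\Delta \hat v\|$ together with interpolation $\|\cdot\|_{\rL^4}\lesssim \|\cdot\|_{\rH^1}^{1/2}\|\cdot\|_{\rH^2}^{1/2}$ and Young's inequality, absorbing part of the result into the left-hand side.

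\textbf{Step 3: pressure and boundary terms.} By \eqref{eq:pressure}, $\nablaH\hat\Phi$ and $\nablaH\hat\pi$ are sums of two pieces. The first is a Riesz-type transform of the averaged nonlinearity, bounded in $\rL^2$ as in Step 2. The second is $p_s\nablaH(-\DeltaH)^{-1}\divH(|[\hat v]|[\hat v])$, whose $\rL^2(\T^2)$-norm is bounded by $\||[\hat v]|^2\|_{\rL^2}\le\|[\hat v]\|_{\rL^4}^2\lesssim \|\hat v\|_{\rH^1}\|\hat v\|_{\rH^{2}}$, by the trace theorem and interpolation. Pairing this with $\|\Delta\hat v\|$ and applying \autoref{cor:elliptic estimate} again gives the same quadratic-times-graph-norm structure.

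\textbf{Main obstacle.} The hardest part will be the nonlinear terms involving $\hat w\,\partial_z\hat v$ against $\Delta\hat v$. The aim is to get the prefactor $\|\hat v\|_{\rH^1}^2$ rather than merely $\|\hat v\|_{\rH^1}$ while keeping exactly the power $2$ on the graph norm; this requires a careful anisotropic Young-absorption. I would first try splitting $\hat v=\bar{\hat v}+\tilde{\hat v}$ as in \cite{CT:07}, so that the 2D barotropic part benefits from the 2D Ladyzhenskaya inequality.
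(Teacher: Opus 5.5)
Your skeleton is the paper's. You test with $-\Delta^a\hvair$ and $-\Delta\hvocn$, obtain $\frac{p_s}{3}\partial_t\int_{\T^2}|[\hat v]|^3$ from the coupling conditions, cancel the $c_*$-terms via $\frac{p_s}{3}\int_{\T^2}c_*\cdot\nablaH|[\hat v]|^3=0$, and split the pressure into a nonlinear and a coupling part. (With the prefactor $\frac12$, the boundary term actually matches $\frac{2p_s}{3}$ in $\mathrm D$, not $\frac{p_s}{3}$; the paper has the same harmless slip. Using smallness to invoke \autoref{cor:elliptic estimate} is also what the paper implicitly does.)

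\textbf{Gap in Step 2.} When $\|\hat v\|_{\rH^1}\le\varepsilon<1$, the bound $C\|\hat v\|_{\rH^1}\|\hat v\|_{\rH^2}^2$ is \emph{not} dominated by the $\|\hat v\|^2_{\rH^1}$-weighted right-hand side, because the first power is the larger one. In fact this trilinear bound cannot give the lemma at all. After \autoref{cor:elliptic estimate} it leaves $C\|\hat v\|_{\rH^1}\|\hat v\|^2_{\rL^2}$, which carries only one power of the small factor. That term is neither absorbable nor of the claimed form. You must keep $\|\Delta\hat v\|_{\rL^2}$ as a separate factor. Cauchy--Schwarz and Young give $|\int N\cdot\Delta\hat v|\le\frac14\|\Delta\hat v\|^2_{\rL^2}+C\|N\|^2_{\rL^2}$. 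So what is needed is an $\rL^2$-bound $\|N\|^2_{\rL^2}\le C\|\hat v\|^2_{\rH^1}\|\hat v\|^2_{\rH^2}$ for the nonlinearity $N$, and likewise for the pressure. Only then does \autoref{cor:elliptic estimate} give exactly $C\|\hat v\|^2_{\rH^1}(\|\Delta\hat v\|^2+\|\hat v\|^2)$. You mention this only as a fallback, but it is the argument.

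\textbf{Missing estimate for $\hat w\,\partial_z\hat v$.} The $\rL^2$-bound for this term, your ``main obstacle'', is left unproved, and the tools you propose are not the right ones.
Your inequality $\|\cdot\|_{\rL^4}\lesssim\|\cdot\|_{\rH^1}^{1/2}\|\cdot\|_{\rH^2}^{1/2}$, applied to $\nabla\hat v$, is the 2D Ladyzhenskaya inequality. In 3D one has $\|\nabla\hat v\|_{\rL^4}\lesssim\|\hat v\|^{1/4}_{\rH^1}\|\hat v\|^{3/4}_{\rH^2}$, which still handles $\hat v\cdot\nablaH\hat v$. But $\hat w$ only has the regularity of $\nablaH\hat v$, so it is not controlled in $\rL^4$ by $\|\hat v\|_{\rH^1}$. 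A barotropic--baroclinic splitting is not needed either; that is a device for large-data global bounds.

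The paper uses the anisotropic estimate of \cite[Lemma 5.1]{HK:16}, $\|\hat v\cdot\nablaH\hat v+\hat w\,\partial_z\hat v\|_{\rL^2}\le C\|\hat v\|^2_{\rH^{3/2}}$. It follows from \eqref{eq:wv}, which gives $|\hat w|\le\int_{-h}^0|\nablaH\hat v|\d\xi$. Hence $\|\hat w\|_{\rL^\infty_z\rL^4_{xy}}\lesssim\|\nablaH\hat v\|_{\rL^2_z\rL^4_{xy}}\lesssim\|\hat v\|_{\rH^{3/2}}$, and also $\|\partial_z\hat v\|_{\rL^2_z\rL^4_{xy}}\lesssim\|\hat v\|_{\rH^{3/2}}$. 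One then interpolates $\|\hat v\|^2_{\rH^{3/2}}\le C\|\hat v\|_{\rH^1}\|\hat v\|_{\rH^2}$.

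\textbf{Step 3.} Your bound $\|[\hat v]\|^2_{\rL^4}\lesssim\|\hat v\|_{\rH^1}\|\hat v\|_{\rH^2}$ is valid. The paper uses the sharper $\|[\hat v]\|^2_{\rL^4}\lesssim\|[\hat v]\|^2_{\rH^{1/2}(\T^2)}\lesssim\|\hat v\|^2_{\rH^1}$. Either works, provided it is combined through Young rather than through the trilinear route.
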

\begin{proof}
		Using integration by parts and the boundary conditions \eqref{eq:wind bc} we obtain
	\begin{align*}
		\int_{\Omegaair} \partial_t \hvair (-\Delta^a \hvair)
		+ 
		\int_{\Omegaocn} \partial_t \hvocn (-\Delta \hvocn)
		&= 
		\int_{\Omegaair} \partial_t \nabla^a \hvair \cdot \nabla^a \hvair
		- p_s^2 \int_{\T^2} \partial_p \hvair \partial_t \hvair 
		+ 
		\int_{\Omegaocn} \partial_t \nabla \hvocn\cdot \nabla \hvocn
		- \int_{\T^2} \partial_z \hvocn \partial_t \hvocn 
		\\
		&= 
		\int_{\Omegaair} \partial_t \nabla^a \hvair \cdot \nabla^a \hvair
		+ 
		\int_{\Omegaocn} \partial_t \nabla \hvocn\cdot \nabla \hvocn \\
		&+ p_s \int_{\T^2} ([\hat{v}]) |\hvair -\hvocn| \cdot \partial_t \hvair 
		- p_s \int_{\T^2} ([\hat{v}]) |\hvair -\hvocn| \cdot \partial_t \hvocn 
		\\
		&= \frac{1}{2} 
		\partial_t
		\| \nabla^a \hvair \|_{\rL^2(\Omegaair)}^2
		+ \frac{1}{2} 
		\partial_t
		\| \nabla^a \hvair \|_{\rL^2(\Omegaair)}^2
		+ 
		\frac{p_s}{3}\partial_t
		\| [\hat{v}] \|_{\rL^3(\T^2)}^3 .
	\end{align*}
	Hence, multiplying the first equation in \eqref{eq:CAOc} by $(-\Delta^a \hvair)$ and the third by $(-\Delta \hvocn)$ yields
	\begin{equation}
		\begin{aligned}
			& \quad \frac{1}{2} 
			\partial_t
			\| \nabla^a \hvair \|_{\rL^2(\Omegaair)}^2
			+ \frac{1}{2} 
			\partial_t
			\| \nabla^a \hvair \|_{\rL^2(\Omegaair)}^2
			+ 
			\frac{p_s}{3}\partial_t
			\| [\hat{v}] \|_{\rL^3(\T^2)}^3
			+ \| \Delta^a \hvair \|_{\rL^2(\Omegaair)}^2
			+ \| \Delta \hvocn \|_{\rL^2(\Omegaocn)}^2 \\
			&= \int_{\Omegaair} c_* \cdot \nablaH \hvair \cdot \Delta^a \hvair
			+ \int_{\Omegaair} \hvair \cdot \nablaH \hvair \cdot \Delta^a \hvair
			+ \int_{\Omegaair} \hat{\omega} \partial_p \hvair \cdot \Delta^a \hvair
			+ \int_{\Omegaair} \nablaH \hat{\Phi}_s \cdot  \Delta^a \hvair \\
			&+
			\int_{\Omegaocn} c_* \cdot \nablaH \hvocn \cdot \Delta \hvocn
			+ \int_{\Omegaocn} \hvocn \cdot \nablaH \hvocn \cdot \Delta \hvocn
			+ \int_{\Omegaocn} \hat{w} \partial_z \hvocn \cdot \Delta \hvocn
			+ \int_{\Omegaocn} \nablaH \hat{\pi}_s \cdot \Delta \hvocn .
		\end{aligned}
		\label{eq:testing}
	\end{equation}
	First, we show that there is no contribution from the the lower order terms depending on the equilibrium $(c_*,c_*)$. Indeed integrating by parts yields
	\begin{align*}
		\int_{\Omegaocn} c_* \cdot \nablaH \hvocn \cdot \Delta \hvocn
		&= 
		\int_{\Omegaocn} c_*^\eta \partial_\eta \hvair_k \partial_l \partial^l \hvocn_k = 
		- \int_{\Omegaocn} c_*^\eta \hvocn_k \partial_\eta \partial_l \partial^l \hvocn_k \\
		&= 
		- \int_{\Omegaocn} c_*^\eta \partial_l \partial^l \hvocn_k \cdot \partial_\eta \hvocn_k
		- \int_{\T^2} c_*^\eta \hvocn_k \partial_z \partial_{\eta} \hvocn_k 
		+ \int_{\T^2} c_*^\eta \partial_z \hvocn_k \partial_{\eta} \hvocn_k \\
		&= 
		- \int_{\Omegaocn} c_*^\eta \partial_l \partial^l \hvocn_k \cdot \partial_\eta \hvocn_k
		+ 2 \int_{\T^2} c_*^\eta \partial_z \hvocn_k \partial_{\eta} \hvocn_k \\
		&= 
		- 
		\int_{\Omegaocn} c_* \cdot \nablaH \hvocn \cdot \Delta \hvocn
		+ 2 \int_{\T^2} (c_* \cdot \nablaH \hvocn) \cdot (\partial_z \hvocn)
	\end{align*}
	for $\eta \in \{x,y\}$ and $k,l \in \{x,y,z\}$.
	Using the coupling boundary conditions \eqref{eq:wind bc}, this implies
	\begin{equation*}
		\int_{\Omegaocn} c_* \cdot \nablaH \hvocn \cdot \Delta \hvocn
		= p_s \int_{\T^2} (c_* \cdot \nablaH \hvocn) \cdot ([\hat{v}]) |[\hat{v}]| .
	\end{equation*}
	A similar calculation yields
	\begin{equation*}
		\int_{\Omegaair} c_* \cdot \nablaH \hvair \cdot \Delta^a \hvair
		= - p_s \int_{\T^2} (c_* \cdot \nablaH \hvair) \cdot ([\hat{v}]) |[\hat{v}]| .
	\end{equation*}
	Combining the two identities and using the Gauss theorem, we obtain
	\begin{align*}
		\int_{\Omegaair} c_* \cdot \nablaH \hvair \cdot \Delta^a \hvair
		+ \int_{\Omegaocn} c_* \cdot \nablaH \hvocn \cdot \Delta \hvocn
		&= - \frac{p_s}{3} \int_{\T^2} (c_* \cdot \nablaH)(|[\hat{v}]|^3) \\
		&= - \frac{p_s}{3} \int_{\T^2} \divH( |[\hat{v}]|^3 \, c_*) 
		= 0 .
	\end{align*}
	Next, we deal with the terms in \eqref{eq:testing} coming from the non-linearities. 
	Using the estimates from \cite[Lemma 5.1]{HK:16} and interpolation inequality we obtain for the non-linearities
	\begin{align*}
		\| \hvair \cdot \nablaH \hvair
		+ \hat{\omega} \partial_p \hvair \|_{\rL^2(\Omegaair)}^2
		\leq 
		C \cdot 
		\| \hvair \|_{\rH^{\frac{3}{2}}(\Omegaair)}^4
		\leq 
		C \cdot 
		\| \hvair \|_{\rH^1(\Omegaair)}^2 \cdot
		\| \hvair \|_{\rH^2(\Omegaair)}^2 , \\
		\| \hvocn \cdot \nablaH \hvocn
		+ \hat{w} \partial_z \hvocn \|_{\rL^2(\Omegaocn)}^2
		\leq 
		C \cdot 
		\| \hvocn \|_{\rH^{\frac{3}{2}}(\Omegaocn)}^4
		\leq 
		C \cdot 
		\| \hvocn \|_{\rH^1(\Omegaocn)}^2 \cdot
		\| \hvocn \|_{\rH^2(\Omegaocn)}^2 .
	\end{align*}
	Finally, it remains to estimate the geopotential and the ocean pressure. Since, the estimates are essentially the same, we only carry out the later one. We start by recalling that the oceanic pressure is given by
	\begin{equation*}
		\nablaH \pi_s 
		= (\nablaH \DeltaH^{-1} \divH) ( \overline{ \hvocn \cdot \nablaH \hvocn + w \partial_z \hvocn } - \partial_z \hvocn|_{\Gau} ),
		=: \nablaH \pi_{\mathrm{NL}} 
		+ \nablaH \pi_{\mathrm{CL}},
	\end{equation*}
	where subscript $\mathrm{NL}$ denotes the part arising from the non-linearity and $\mathrm{CL}$ the part arising from the coupling boundary conditions \eqref{eq:wind bc}. 
	Calderon-Zygmund estimates and $\| \overline{\hvocn} \|_{\rL^2(\T^2)} \leq \| \hvocn \|_{\rL^2(\Omegaocn)}$ yields that 
	\begin{equation*}
		\| \nablaH \pi_{\mathrm{NL}} \|_{\rL^2(\T^2)}^2
		\leq C \cdot \| \overline{ \hvocn \cdot \nablaH \hvocn + w \partial_z \hvocn } \|_{\rL^2(\T^2)}^2
		\leq C \cdot \| \hvocn \cdot \nablaH \hvocn + w \partial_z \hvocn \|_{\rL^2(\Omegaocn)}^2
	\end{equation*}
	and the desired estimate follows from the last step. For the coupling part, we apply Calderon-Zygmund estimates and use the boundary conditions \eqref{eq:wind bc} to obtain
	\begin{equation*}
		\| \nablaH \pi_{\mathrm{CL}} \|_{\rL^2(\T^2)}^2
		\leq 
		C \cdot \| \partial_z \hvocn|_{\Gau} \|_{\rL^2(\T^2)}^2
		= C \cdot \| [\hat{v}] \|_{\rL^4(\T^2)}^4 .
	\end{equation*}
	Now, the embedding
	\begin{equation*}
		\rH^{\frac{1}{2}}(\T^2) \hookrightarrow \rL^4(\T^2)
	\end{equation*} 
	and the trace inequality imply
	\begin{align*}
		\| \nablaH \pi_{\mathrm{CL}} \|_{\rL^2(\T^2)}^2
		\leq 
		C \cdot \| [\hat{v}] \|_{\rL^4(\T^2)}^4 
		\leq C \cdot \| [\hat{v}] \|_{\rH^{\frac{1}{2}}(\T^2)}^4  
		\leq C \cdot ( \| \hvair \|_{\rH^1(\Omegaair)}^4 + \| \hvocn \|_{\rH^1(\Omegaocn)}^4 ) .
	\end{align*}
	Combining all estimates an absorbing argument and Young's inequality yield the claim. 
\end{proof}

\begin{prop}[Construction of the trapping region]\label{prop:trapping}
	Let $(\hvair,\hvocn) \in \rC([0,
	+\infty);\rH_\gamma)$ to \eqref{eq:CAOc} be the global strong solution with initial data $(\hvair_0,\hvocn_0)^\top \in \rH_\gamma$.
	For $\varepsilon > 0$ sufficiently small there exists $t_0 > 0$ such that 
		\begin{equation*}
			\| \hvair(t) \|_{\rH^1(\Omegaair)}^2
			+ 
			\| \hvocn(t) \|_{\rH^1(\Omegaocn)}^2 
			+\int_{\T^2} p_s |[\hat{v}(t)]|^3
			< \varepsilon \qquad \text{ for all } t > t_0 .  
		\end{equation*}
\end{prop}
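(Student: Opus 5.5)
We argue by a continuation argument: first find a single time $t_1$ at which the solution is deep inside the small set, then use the differential inequality of \autoref{lem:differential inequality dissipation} to show it cannot leave. Set
\begin{equation*}
	\mathrm{F}(t) := \| \hvair(t) \|_{\rH^1(\Omegaair)}^2 + \| \hvocn(t) \|_{\rH^1(\Omegaocn)}^2 + \int_{\T^2} p_s |[\hat{v}(t)]|^3 .
\end{equation*}
Since $\rE_{c_*}$ is nonincreasing, $\| \hvair \|_{\rL^2}^2 + \| \hvocn \|_{\rL^2}^2 \le 2\rE_{c_*}(t)$, and $\mathrm{D}$ controls the gradients and the boundary term, we get $\mathrm{F}(t) \le C\,(\mathrm{D}(t) + \rE_{c_*}(t))$. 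By \autoref{cor:dissipation liminf} we have $\rE_{c_*}(t) \to 0$ and $\liminf_{t\to\infty} \mathrm{D}(t) = 0$. Hence for any $\delta > 0$ there is $t_0$ with $\rE_{c_*}(t) < \delta$ for all $t \ge t_0$ and some $t_1 \ge t_0$ with $\mathrm{D}(t_1) < \delta$, so $\mathrm{F}(t_1) \le C\delta$. Continuity of $\mathrm{F}$ follows from $(\hvair,\hvocn) \in \rC([0,\infty);\rH_\gamma)$ together with the trace embedding $\rH^1 \hookrightarrow \rL^3(\T^2)$.

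Next, let $T^* := \sup\{ T \ge t_1 \colon \mathrm{F}(t) < \varepsilon \text{ on } [t_1,T) \}$, and suppose $T^* < \infty$. On $[t_1,T^*)$ the smallness $\mathrm{F} < \varepsilon$ is in force, so \autoref{lem:differential inequality dissipation} gives
\begin{equation*}
	\tfrac12 \partial_t \mathrm{D} + \bigl( \| \Delta^a \hvair \|_{\rL^2}^2 + \| \Delta \hvocn \|_{\rL^2}^2 \bigr) \le C\varepsilon \bigl( \| \Delta^a \hvair \|_{\rL^2}^2 + \| \Delta \hvocn \|_{\rL^2}^2 \bigr) + C\varepsilon\, \rE_{c_*}(t) .
\end{equation*}
For $C\varepsilon \le \tfrac12$ the Laplacian term is absorbed, leaving $\partial_t \mathrm{D} \le C\varepsilon\, \rE_{c_*}(t)$. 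The two dissipation functionals differ only by the factor $\tfrac13$ on the boundary term, so they are equivalent up to constants.

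The main obstacle is that the remaining source term $C\varepsilon\,\rE_{c_*}(t)$ need not be integrable in time. Integrating directly would give only $\mathrm{D}(t) \le \mathrm{D}(t_1) + C\varepsilon\,\delta\,(t - t_1)$, which does not close. I would try two refinements.

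The first is to keep a fraction of the Laplacian term instead of absorbing all of it. By the Poincaré inequality \autoref{prop:poincare inequalitly}, $\rE_{c_*} \le C(\mathrm{D} + \mathrm{D}^{2/3})$. Combining this with \autoref{lem:elliptic estimate 2}, valid because we are inside the small set, gives
\begin{equation*}
	\partial_t \mathrm{D} + c_0\, \mathrm{D}^{4/3} \le C\varepsilon\, \mathrm{D}^{2/3} .
\end{equation*}
This controls $\mathrm{D}$ only by a level of order $\varepsilon^{1/2}$, which is insufficient when $\varepsilon$ is small. So I would instead sharpen the lemma's error term. The $\rL^2$ contribution enters only through the $\| \hvair \|_{\rL^2}^2 + \| \hvocn \|_{\rL^2}^2$ part of the $\rH^2$-norm in \autoref{cor:elliptic estimate}. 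It multiplies quartic expressions such as $\| [\hat{v}] \|_{\rL^4}^4$ and the nonlinearity bound $\|v\|_{\rH^1}^2\|v\|_{\rH^2}^2$, so the error can be bounded by $C\,\mathrm{F}\,(\mathrm{D} + \|\Delta v\|^2)$ or by $C\,\mathrm{D}^2$.

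The second refinement uses integrability. Write the error as $C\,\mathrm{F}\cdot\rE_{c_*} \le C\,\mathrm{F}\,(\mathrm{D} + \mathrm{D}^{2/3})$, and note $\int_{t_1}^\infty \mathrm{D}\,\d s \le \rE_{c_*}(t_1) \le \delta$ by \autoref{lem:energyestimates}. With the linear-in-$\mathrm{D}$ bound, integration yields
\begin{equation*}
	\mathrm{D}(t) \le \mathrm{D}(t_1) + C\varepsilon\delta \le C\delta .
\end{equation*}
For the $\mathrm{D}^{2/3}$ piece I would use the elliptic lower bound of \autoref{lem:elliptic estimate 2} to dominate it by $\mathrm{D}^{4/3}$ plus an integrable remainder.

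Granting this, $\mathrm{F}(t) \le C(\mathrm{D}(t) + \rE_{c_*}(t)) \le C\delta < \varepsilon/2$ on $[t_1,T^*)$. By continuity of $\mathrm{F}$ this contradicts the maximality of $T^*$, hence $T^* = \infty$, and the statement holds with $t_1$ in place of $t_0$.
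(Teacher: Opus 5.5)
There is a genuine gap in the route you actually commit to. After discarding your first refinement, the argument rests on two claims that are not established.

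\emph{The sharpened error bound.} You assert that the error is $C\,\mathrm{F}(\mathrm{D}+\|\Delta v\|^2)$ or $C\,\mathrm{D}^2$. This does not follow from the proof of \autoref{lem:differential inequality dissipation}. The $\rL^2$-contribution there comes from \autoref{cor:elliptic estimate} and from crude bounds such as $\|\nablaH\pi_{\mathrm{CL}}\|^2\le C\|[\hat v]\|_{\rL^4}^4$. On the constant states $(c_1,c_2)$, $c_1\neq c_2$, of the fibre one has $\|\hat v\|_{\rL^2}^2\sim|c_1-c_2|^2$ and $\|[\hat v]\|_{\rL^4}^4\sim|c_1-c_2|^4$, while $\mathrm{D}\sim|c_1-c_2|^3$. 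So these terms are not bounded by $C\mathrm{D}$ or $C\mathrm{D}^2$, and a genuinely finer estimate of the transport and pressure terms would be needed.

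\emph{The $\mathrm{D}^{2/3}$ piece.} It cannot be written as $\mathrm{D}^{4/3}$ plus an integrable remainder. For small $\mathrm{D}$ one has $\mathrm{D}^{2/3}\gg\mathrm{D}^{4/3}$, and Young only gives $C\varepsilon\mathrm{D}^{2/3}\le\frac{c_0}{2}\mathrm{D}^{4/3}+C'\varepsilon^2$, whose remainder is a constant. Knowing $\int\mathrm{D}\d s<\infty$ gives no control of $\int\mathrm{D}^{2/3}\d s$. So ``granting this'' grants precisely the step that needs proof.

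The fix is the first refinement you discarded; you dismissed it because of an exponent slip. Work on $[t_1,T^*)$, where $\mathrm{D}\le 1$, so that $\rE_{c_*}\le C\mathrm{D}^{2/3}$ by \autoref{prop:poincare inequalitly}. There you have $\partial_t\mathrm{D}+c_0\mathrm{D}^{4/3}\le C\varepsilon\mathrm{D}^{2/3}$. The net right-hand side $\mathrm{D}^{2/3}(C\varepsilon-c_0\mathrm{D}^{2/3})$ is negative whenever $\mathrm{D}>(C\varepsilon/c_0)^{3/2}$. Comparison therefore gives $\mathrm{D}(t)\le\max\{\mathrm{D}(t_1),(C\varepsilon/c_0)^{3/2}\}$, a level of order $\varepsilon^{3/2}$, not $\varepsilon^{1/2}$.

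To close, use $\mathrm{F}\le C_1(\rE_{c_*}+\mathrm{D})$ and $\rE_{c_*}(t)\le\rE_{c_*}(t_1)$. Take $\varepsilon$ so small that $C_1(C\varepsilon/c_0)^{3/2}<\varepsilon/4$. Then take $t_1$ with $C_1(\rE_{c_*}(t_1)+\mathrm{D}(t_1))<\varepsilon/4$. This gives $\mathrm{F}<\varepsilon/2$ on $[t_1,T^*)$, and your continuation argument goes through.

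This is the paper's mechanism: no integrability of the source is needed, only the superlinear damping $c_0\mathrm{D}^{4/3}$ from \autoref{lem:elliptic estimate 2} together with an ODE barrier. The paper bounds the source differently, by $C\varepsilon\rE_{c_*}(t)\le C\varepsilon\rE_{c_*}(t_0)\le C\varepsilon\delta$ using monotonicity of the energy. That yields the barrier $\max\{\mathrm{D}(t_0),(C\varepsilon\delta/c_0)^{3/4}\}$, whose smallness comes from the choice of $\delta$ rather than from $\varepsilon^{3/2}\ll\varepsilon$.
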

\begin{proof}
	For every $T >0$ we have $(\hvair,\hvocn) \in \rC([0,T],\rH_\gamma)$.
	We consider the quantity 
	\begin{equation*}
		\mathrm{Q}(t)
		:= \rE_{c_*}(t) + \mathrm{D}(t)
		= \| \hvair(t) \|_{\rH^1(\Omegaair)}^2
		+ 
		\| \hvocn(t) \|_{\rH^1(\Omegaocn)}^2 
		+\int_{\T^2} p_s |[\hat{v}(t)]|^3 .
	\end{equation*}
	For $0 < \delta < \varepsilon$, which will be determined later, there exists by \autoref{cor:dissipation liminf} a time $t_0 > 0$ such that $\mathrm{Q}(t_0) < \delta$. We consider  
	the set $S$ where the quantity $\mathrm{Q}$ is trapped
	\begin{equation*}
		S := \{ T > t_0 \colon \mathrm{Q}(t) < \varepsilon \text{ for all } t \in [t_0,T] \} 
	\end{equation*}
	and its maximal point in time
	\begin{equation*}
		T_* := \sup S .
	\end{equation*}
	Since $0 < \delta < \varepsilon$ continuity of the quantity $\mathrm{Q}$, see \autoref{lem:energyestimates} and \autoref{prop:linftyl2}, implies $T_* > t_0$. 
	Now, we choose $\varepsilon> 0$ small enough to guarantee the assumptions of \autoref{cor:elliptic estimate} and \autoref{lem:elliptic estimate 2}.
	Using \autoref{lem:differential inequality dissipation} we obtain
	\begin{equation*}
		\frac{1}{2} \partial_t \mathrm{D}(t)
		+ ( \| \Delta^a \hvair \|_{\rL^2(\Omegaair)}^2  +\| \Delta \hvocn \|_{\rL^2(\Omegaocn)}^2  )
		\leq 
		C \cdot \mathrm{Q}(t) \cdot ( \| \Delta^a \hvair \|_{\rL^2(\Omegaair)}^2  +\| \Delta \hvocn \|_{\rL^2(\Omegaocn)}^2  + \rE_{\mathrm{c}_*}(t) ) .
	\end{equation*}
	Choosing $\varepsilon >0$ sufficiently small, we have $C \cdot \mathrm{Q}(t) < C \cdot \varepsilon < \frac{1}{2}$ and an absorbing argument and \autoref{lem:energyestimates} yield
	\begin{equation*}
		\partial_t \mathrm{D}(t)
		+ ( \| \Delta^a \hvair \|_{\rL^2(\Omegaair)}^2  +\| \Delta \hvocn \|_{\rL^2(\Omegaocn)}^2  )
		\leq 
		C \cdot \varepsilon \cdot \rE_{\mathrm{c}_*}(t) 
		\leq 
		C \cdot \varepsilon \cdot \rE_{\mathrm{c}_*}(t_0) 
		\leq C \cdot \varepsilon \cdot \delta . 
	\end{equation*}
	Now, applying \autoref{lem:elliptic estimate 2} we obtain
	\begin{equation*}
		\partial_t \mathrm{D}(t)
		+ c_0 \cdot \mathrm{D}(t)^{\frac{4}{3}}
		\leq C \cdot \varepsilon \cdot \delta .
	\end{equation*}
	By \autoref{cor:analyticity} we know that the dissipation $\mathrm{D}$ is $\rC^1$ in time, and we conclude  
	\begin{equation}
		\mathrm{D}(t)
		\leq \max\left\{ \mathrm{D}(t_0),\biggl( \frac{C \cdot \varepsilon \cdot \delta}{c_0}\biggr)^{\frac{3}{4}} \right\} .
		\label{eq:D2}
	\end{equation}
	By choosing $\delta > 0$ accordingly we obtain from \eqref{eq:D2} and $\rE_{c_*}(t) \leq \rE_{c_*}(t_0) < \delta$ that 
	\begin{equation}
		\mathrm{Q}(t)
		\leq \delta + \max\left\{ \mathrm{D}(t_0),\biggl( \frac{C \cdot \varepsilon \cdot \delta}{c_0}\biggr)^{\frac{3}{4}} \right\}
		\leq \frac{\varepsilon}{2} . \label{eq:Q}
	\end{equation}
	Finally, we assume by contradiction $T_* < + \infty$. By continuity of the quantity $\mathrm{Q}$ and the definition of the set $S$ we obtain $\mathrm{Q}(T_*) = \varepsilon$. On the other hand continuity of the quantity $\mathrm{Q}$ and the inequality \eqref{eq:Q} imply $\mathrm{Q}(T_*) \leq \frac{\varepsilon}{2}$ which is a contradiction. Therefore $T_* = + \infty$ and the claim follows. 
\end{proof}

\subsection{Convergence to equilibria and optimal convergence rate}
\label{ssec:convergence rate}

\

After having established that the global strong solution  $(\hvair,\hvocn) \in \rC([0,
+\infty);\rH_\gamma)$ to \eqref{eq:CAOc} can be trapped in a neighbourhood in $X_\gamma$ of the equilibrium $(c_*,c_*)$ with the constant $c_* = \mathrm{V}(\vair_0,\vocn_0)$, we now aim to show that it actually converges to the equilibrium $(c_*,c_*)$ in the $\rH^1$-norm as $t \to + \infty$ and determine the rate of convergence. 

\smallskip

We start with the following estimates of the energy, which is a direct consequence of \autoref{prop:energy lyapunov} and \autoref{prop:poincare inequalitly}.

\begin{lem}[Energy inequalities near equilibrium]\label{lem:differential inequality}
	Consider initial data $(\hvair_0,\hvocn_0)^\top \in \rH_\gamma$
	with 
	\begin{equation*}
		\int_{\Omegaatm} (|\nablaH \vair|^2 
		+ |p \partial_p \vair|^2) + \int_{\Omegaocn} |\nabla \vocn|^2 +\int_{\T^2} p_s |[v]|^3
			< \varepsilon \quad \text{ for all } t \geq t_0,
	\end{equation*} 
	with $\varepsilon >0$ from \autoref{prop:trapping},
	then there exists a constant $C > 0$ such that the energy satisfies the different inequality
	\begin{equation*}
		\partial_t \rE_{c_*}(t)
		\leq - C \cdot \rE(t)^{\frac{3}{2}} \quad \text{ for all } t \geq t_0 .   
	\end{equation*} 	
	This implies
	\begin{equation*}
		\rE_{c_*}(t)
		\leq \frac{C}{(\rE_{c_*}(t_0)^{-\frac{1}{2}} + t)^2}  \quad \text{ for all } t \geq t_0 .
	\end{equation*}
\end{lem}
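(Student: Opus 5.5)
The plan is to combine the energy identity of \autoref{prop:energy lyapunov} (restricted to the fibre, cf.\ \autoref{lem:energyestimates}), namely $\partial_t \rE_{c_*}(t) = -\mathrm{D}(t)$, with the Poincaré inequality \autoref{prop:poincare inequalitly}. The latter gives, with $X := \int_{\Omegaatm}(|\nablaH \hvair|^2+|p\partial_p\hvair|^2)+\int_{\Omegaocn}|\nabla\hvocn|^2$ and $Y := \int_{\T^2} p_s|[\hat v]|^3$,
\begin{equation*}
	\rE_{c_*}(t) \leq C\bigl(X + Y^{\frac{2}{3}}\bigr).
\end{equation*}
The goal is to invert this into a lower bound of $\mathrm{D} = X + Y$ by a power of $\rE_{c_*}$.

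This is where the trapping assumption enters. For $t\ge t_0$ we have $X<\varepsilon\le 1$ and $Y<\varepsilon\le1$. Hence $X \le X^{\frac{2}{3}}$, and therefore
\begin{equation*}
	\rE_{c_*} \le C\bigl(X^{\frac{2}{3}} + Y^{\frac{2}{3}}\bigr) \le 2C (X+Y)^{\frac{2}{3}} = 2C\,\mathrm{D}^{\frac{2}{3}}.
\end{equation*}
Raising to the power $\frac32$ gives $\mathrm{D}(t)\ge c\,\rE_{c_*}(t)^{\frac32}$, so $\partial_t \rE_{c_*} \le -c\,\rE_{c_*}^{\frac32}$ for all $t\ge t_0$. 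This is the first assertion; the $\rE$ in the statement is read as $\rE_{c_*}$.

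For the decay bound, I will first dispose of the trivial case: if $\rE_{c_*}(t_1)=0$ for some $t_1$, then by monotonicity $\rE_{c_*}\equiv0$ afterwards and there is nothing to prove. Otherwise $\rE_{c_*}>0$ and is $\rC^1$ by \autoref{cor:analyticity}, so I set $g:=\rE_{c_*}^{-\frac12}$. Then
\begin{equation*}
	g' = -\tfrac12 \rE_{c_*}^{-\frac32}\partial_t\rE_{c_*} \ge \tfrac{c}{2}.
\end{equation*}
Integrating from $t_0$ to $t$ yields $g(t)\ge g(t_0)+\frac c2 (t-t_0)$, hence
\begin{equation*}
	\rE_{c_*}(t)\le \bigl(\rE_{c_*}(t_0)^{-\frac12}+\tfrac c2(t-t_0)\bigr)^{-2}.
\end{equation*}
Adjusting constants, with $t_0$ fixed and absorbed into $C$ by comparing $a+\frac c2(t-t_0)$ with $a+t$ up to a multiplicative constant, gives the stated form.

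The only delicate step is the homogeneity mismatch between the gradient part (quadratic) and the boundary part (cubic, entering the Poincaré inequality with power $\frac23$). It is resolved precisely by the smallness $X\le1$ from \autoref{prop:trapping}, which is why the inequality holds only near the equilibrium and produces the non-exponential exponent $\frac32$. The constant adjustment in the final form uses $\rE_{c_*}(t_0)\le \varepsilon$ to make $a := \rE_{c_*}(t_0)^{-\frac12}$ bounded below, so the comparison constant is uniform.
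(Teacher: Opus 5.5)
Your proposal is correct and follows essentially the same route as the paper: the energy identity $\partial_t \rE_{c_*} = -\mathrm{D}$ is combined with \autoref{prop:poincare inequalitly}, and the smallness $X \le 1$ resolves the quadratic/cubic mismatch (your $X \le X^{2/3}$ plus concavity is exactly the paper's inequality $a + b^{2/3} \le 2(a+b)^{2/3}$ for $a+b \le 1$), followed by an ODE comparison. Your direct integration of $\rE_{c_*}^{-1/2}$ is a slightly more careful version of the paper's comparison step: it yields the exact barrier $(\rE_{c_*}(t_0)^{-1/2} + \tfrac{c}{2}(t-t_0))^{-2}$ and makes explicit that the final constant depends on $t_0$.
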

\begin{proof}
	We recall from the proof of \autoref{prop:energy lyapunov} that 
	\begin{equation*}
		- \partial_t \rE_{c_*}(t)
		= \int_{\Omegaatm} (|\nablaH \vair|^2 
		+ |p \partial_p \vair|^2) + \int_{\Omegaocn} |\nabla \vocn|^2 +\int_{\T^2} p_s |[v]|^3 .
	\end{equation*}
	Applying the basic algebraic inequality $a+b^{\frac{2}{3}} \leq 2 (a+b)^{\frac{2}{3}}$ for $a, b \geq 0$ with $a+b \leq 1$, to the quantities $a := \int_{\Omegaatm} (|\nablaH \vair|^2 
	+ |p \partial_p \vair|^2) + \int_{\Omegaocn} |\nabla \vocn|^2$ and $b := \int_{\T^2} p_s |[v]|^3$ we obtain from \autoref{prop:poincare inequalitly} the estimate
	\begin{align*}
		\quad &\int_{\Omegaatm} (|\nablaH \vair|^2 
		+ |p \partial_p \vair|^2) + \int_{\Omegaocn} |\nabla \vocn|^2 +\int_{\T^2} p_s |[v]|^3 \\
		\geq 
		\ & C \cdot \left( \int_{\Omegaatm} (|\nablaH \vair|^2 
		+ |p \partial_p \vair|^2) + \int_{\Omegaocn} |\nabla \vocn|^2 +\left(\int_{\T^2} p_s |[v]|^3 \right)^{\frac{2}{3}}
		\right)^{\frac{3}{2}}
		\geq C \cdot \rE(t)^{\frac{3}{2}} .
	\end{align*}
	Hence,
	\begin{equation*}
		- \partial_t \rE_{c_*}(t)
		\geq C \cdot \rE(t)^{\frac{3}{2}} .
	\end{equation*}
	Note that, the borderline case ordinary differential equation is
	\begin{equation*}
		\partial_t \rE_{c_*}(t)
		= - C \cdot \rE(t)^{\frac{3}{2}} \quad \text{ for all } t \geq t_0 ,
	\end{equation*}
	and its solution is given by
	\begin{equation*}
		\rE_{c_*}(t)
		= \frac{C}{(\rE_{c_*}(t_0)^{-\frac{1}{2}} + t)^2}  \quad \text{ for all } t \geq t_0 .
	\end{equation*}
	Finally, the claim follows from the comparison principle of ordinary different equations. 
\end{proof}

We conclude the following result about the convergence rate. 

\begin{lem}[Polynomial convergence rate in $\rL^2$]\label{lem:algebraic convergence rate L^2}
	Let $(\hvair,\hvocn) \in \rC([0,
+\infty);\rH_\gamma)$ to \eqref{eq:CAOc} be the global strong solution with initial data $(\hvair_0,\hvocn_0)^\top \in \rH_\gamma$.
Then the total energy satisfies 
	\begin{equation*}
		\| \hvair(t) \|_{\rL^2(\Omegaair)}^2
		+ 
		\| \hvocn(t) \|_{\rL^2(\Omegaocn)}^2 
		\leq \frac{C}{(1+t)^2} \qquad \text{ for all } t \geq 0 .  
	\end{equation*}
\end{lem}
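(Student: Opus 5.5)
The plan is to combine the trapping region with the local energy decay, and to handle the initial time interval with the monotonicity of the energy. By \autoref{prop:trapping}, for $\varepsilon>0$ small there is a time $t_0>0$ such that
\begin{equation*}
\| \hvair(t) \|_{\rH^1(\Omegaair)}^2 + \| \hvocn(t) \|_{\rH^1(\Omegaocn)}^2 + \int_{\T^2} p_s |[\hat{v}(t)]|^3 < \varepsilon \qquad \text{for all } t \geq t_0 .
\end{equation*}
This is exactly the hypothesis of \autoref{lem:differential inequality}, applied to the solution restarted at time $t_0$. That lemma gives, for all $t \geq t_0$,
\begin{equation*}
\rE_{c_*}(t) \leq \frac{C}{(\rE_{c_*}(t_0)^{-\frac{1}{2}} + t - t_0)^2} .
\end{equation*}
I would restart the lemma at $t_0$ so that the time variable is shifted by $t_0$.

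Next I convert this into the form $C(1+t)^{-2}$ for $t \geq t_0$. Since $\rE_{c_*}(t_0) \leq \rE_{c_*}(0)$, the denominator satisfies
\begin{equation*}
\rE_{c_*}(t_0)^{-\frac{1}{2}} + t - t_0 \geq \rE_{c_*}(0)^{-\frac{1}{2}} + t - t_0 .
\end{equation*}
For $t \geq 2t_0 + 1$ we have $t - t_0 \geq \tfrac{1}{2}(1+t)$, which yields the bound with a constant independent of $t$. On the remaining compact interval $[0, 2t_0+1]$ I would use that $\rE_{c_*}$ is nonincreasing by \autoref{prop:energy lyapunov}. This gives $\rE_{c_*}(t) \leq \rE_{c_*}(0) \leq \rE_{c_*}(0)(2t_0+2)^2 (1+t)^{-2}$ there. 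Taking the maximum of the two constants gives the claim for all $t \geq 0$, since $2\rE_{c_*}(t)$ equals the left-hand side of the statement.

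Two degenerate cases need a remark. If $\rE_{c_*}(t_0) = 0$, the solution sits at the equilibrium from $t_0$ on; by \autoref{cor:not finite time} this only happens for equilibrium data, where the statement is trivial. Otherwise $\rE_{c_*}(t_0) > 0$ and the comparison argument is legitimate.

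The main subtlety is that \autoref{lem:differential inequality} uses $a + b^{2/3} \leq 2(a+b)^{2/3}$, which requires $a + b \leq 1$. This is guaranteed for $\varepsilon \leq 1$, so I would simply fix $\varepsilon$ that small when invoking \autoref{prop:trapping}. Beyond that the argument is bookkeeping. The constant $C$ depends on the initial data through $t_0$ and $\rE_{c_*}(0)$, which is consistent with the statement.
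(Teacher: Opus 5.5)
Your proposal is correct and follows the paper's proof: trap the solution after some time $t_0$ via \autoref{prop:trapping}, apply \autoref{lem:differential inequality} from $t_0$ on (with $\rE_{c_*}(t_0)\neq 0$ by \autoref{cor:not finite time}), and cover the initial interval using monotonicity of the energy. Your explicit time shift $t-t_0$ and the split at $2t_0+1$ are a slightly more careful version of the paper's bookkeeping, which writes the comparison solution with $t$ in place of $t-t_0$.
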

\begin{proof}
	Assume $(\hvair_0,\hvocn_0) \not = (0,0)$, since else-wise the result is trivial. 
	
	\smallskip 
	
	By \autoref{prop:trapping} we may choose $t_0 \in [0,+\infty)$ such that
	\begin{equation*}
		\int_{\Omegaatm} (|\nablaH \vair|^2 
		+ |p \partial_p \vair|^2) + \int_{\Omegaocn} |\nabla \vocn|^2 +\int_{\T^2} p_s |[v]|^3
		< \varepsilon \quad \text{ for all } t \geq t_0 .
	\end{equation*} 
	Due to \autoref{cor:not finite time} we know that $\rE_{c_*}(t_0) \not = 0$.
	Now, it follows from \autoref{lem:differential inequality} that
	\begin{align*}
		\| \hvair(t) \|_{\rL^2(\Omegaair)}^2
		+ 
		\| \hvocn(t) \|_{\rL^2(\Omegaocn)}^2
		= 2 \cdot 
		\rE_{c_*}(t)
		\leq \frac{C}{(\rE_{c_*}(t_0)^{-\frac{1}{2}} + t)^2} 
		\leq \frac{\tilde{C}}{(1+t)^2}
		 \quad \text{ for all } t \geq t_0 .
	\end{align*}
	On the other hand, for $t \in [0,t_0]$ we know $\| \hvair(t) \|_{\rL^2(\Omegaair)}^2
	 + 
	 \| \hvocn(t) \|_{\rL^2(\Omegaocn)}^2 \leq \| \hvair_0 \|_{\rL^2(\Omegaair)}^2
	 + 
	 \| \hvocn_0 \|_{\rL^2(\Omegaocn)}^2$ by \autoref{lem:energyestimates}. 
	 Combining both yields the claim. 
\end{proof}

%
Finally, we improve the $\rL^2$-convergence rate from \autoref{lem:algebraic convergence rate L^2} to an $\rH^1$-convergence rate. Note that, it suffices to control the convergence rate of $\rL^2$-norm of the gradient or the dissipation of the system. 

\smallskip

\begin{lem}[Polynomial convergence rate of the gradients]\label{prop:algebraic convergence rate small}
	Let $(\hvair,\hvocn) \in \rC([0,
	+\infty);\rH_\gamma)$ to \eqref{eq:CAOc} be the global strong solution with initial data $(\hvair_0,\hvocn_0)^\top \in \rH_\gamma$. 
	Then the dissipation satisfies
	\begin{equation*}
		\| \nabla^a \hvair(t) \|_{\rL^2(\Omegaair)}^2
		+ 
		\| \nabla \hvocn(t) \|_{\rL^2(\Omegaocn)}^2 
		+ p_s \cdot \| [\hat{v}](t) \|_{\rL^3(\T^2)}^3
		\leq C \cdot (1+t)^{-\frac{3}{2}} \qquad \text{ for all } t \geq 0  
	\end{equation*} 
	for a constant $C > 0$.
\end{lem}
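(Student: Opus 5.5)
The plan is to derive a differential inequality for the dissipation with a decaying error term and then compare it with a barrier function, as announced in the introduction. By \autoref{prop:trapping} we fix $t_0$ such that $\mathrm{Q}(t)<\varepsilon$ for all $t\ge t_0$, with $\varepsilon$ small enough that \autoref{cor:elliptic estimate}, \autoref{lem:elliptic estimate 2} and the absorption in \autoref{lem:differential inequality dissipation} all apply. On $[0,t_0]$ the claim is trivial, since $\mathrm{D}$ is bounded there by \autoref{prop:linftyl2}. So it suffices to work on $[t_0,\infty)$. Note that the dissipation in the statement and the functional $\mathrm{D}$ of \autoref{lem:differential inequality dissipation} (with factor $\frac{p_s}{3}$) are equivalent up to constants, so we may work with the latter.

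First, starting from \autoref{lem:differential inequality dissipation}, I would bound the factor $\|\hvair\|_{\rH^1}^2+\|\hvocn\|_{\rH^1}^2$ by $\mathrm{Q}(t)<\varepsilon$ wherever it multiplies $\|\Delta^a\hvair\|^2+\|\Delta\hvocn\|^2$, and absorb that product into the left-hand side. The remaining term is $C\,\mathrm{Q}(t)\,(\|\hvair\|_{\rL^2}^2+\|\hvocn\|_{\rL^2}^2)$. Here I would not use the crude bound $\mathrm{Q}\le\varepsilon$, but split $\mathrm{Q}=2\rE_{c_*}+\mathrm{D}$ and use \autoref{lem:algebraic convergence rate L^2}, which gives $\rE_{c_*}\le C(1+t)^{-2}$. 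This yields
\begin{equation*}
	\partial_t\mathrm{D}+\bigl(\|\Delta^a\hvair\|^2+\|\Delta\hvocn\|^2\bigr)\le C(1+t)^{-4}+C\varepsilon(1+t)^{-2}\le C(1+t)^{-2}.
\end{equation*}
Then \autoref{lem:elliptic estimate 2} bounds the Laplacian terms from below by $c_0\mathrm{D}^{4/3}$, which gives
\begin{equation*}
	\partial_t\mathrm{D}(t)+c_0\,\mathrm{D}(t)^{\frac43}\le C(1+t)^{-2}\qquad(t\ge t_0).
\end{equation*}

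Second, the barrier step. Set $\phi(t):=K(1+t)^{-3/2}$. Then $\phi'+c_0\phi^{4/3}=-\tfrac32K(1+t)^{-5/2}+c_0K^{4/3}(1+t)^{-2}$. For $K$ large this is at least $\tfrac{c_0}{2}K^{4/3}(1+t)^{-2}\ge C(1+t)^{-2}$. The sign works because $(1+t)^{-5/2}\le(1+t)^{-2}$ and $K^{4/3}$ dominates $K$. Choosing $K$ also so large that $\phi(t_0)\ge\mathrm{D}(t_0)$, the comparison principle for the scalar ODE applies. It needs $\mathrm{D}\in\rC^1$ on $(t_0,\infty)$, which holds by \autoref{cor:analyticity}, and monotonicity of $x\mapsto c_0x^{4/3}$. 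Concretely, at a first touching time $t_1$ with $\mathrm{D}(t_1)=\phi(t_1)$ we would have $\mathrm{D}'(t_1)\ge\phi'(t_1)$, yet the two inequalities give $\mathrm{D}'(t_1)\le C(1+t_1)^{-2}-c_0\phi(t_1)^{4/3}<\phi'(t_1)$ once strict inequality is arranged by slightly enlarging $K$. This contradiction gives $\mathrm{D}\le\phi$ on $[t_0,\infty)$. Combined with the bound on $[0,t_0]$, this proves the claim.

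The main obstacle is the first step: making sure every term from \autoref{lem:differential inequality dissipation} really splits into a piece absorbable by the Laplacian norms and a piece bounded by $C(1+t)^{-2}$. In particular, the pressure coupling part contributes $\|[\hat v]\|_{\rL^4}^4\lesssim\|\hat v\|_{\rH^1}^4$, and it must be assigned to the $\mathrm{Q}\cdot\rE$ or $\mathrm{Q}\cdot\Delta$ structure rather than left as a pure $\mathrm{Q}^2$ term. If needed, I would re-estimate it via Gagliardo--Nirenberg on the trace as $\|\hat v\|_{\rH^1}^2\|\hat v\|_{\rH^{1/2}}^2$ and interpolate $\rH^{1/2}$ between $\rL^2$ and $\rH^2$. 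That produces powers of $\rE_{c_*}$ times powers of $\|\Delta\hat v\|$, which Young's inequality turns into an absorbable part plus $\rE_{c_*}$-type decay.
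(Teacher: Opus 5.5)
Your proposal is correct and follows essentially the same route as the paper. You absorb the Laplacian terms of \autoref{lem:differential inequality dissipation} inside the trapping region, bound the remainder by $C(1+t)^{-2}$ via \autoref{lem:algebraic convergence rate L^2}, use \autoref{lem:elliptic estimate 2} to reach $\partial_t\mathrm{D}+c_0\mathrm{D}^{4/3}\le C(1+t)^{-2}$, and compare with the barrier $K(1+t)^{-3/2}$. Your splitting of $\mathrm{Q}$ into $\rE_{c_*}$ and $\mathrm{D}$ is a harmless refinement of the paper's cruder use of $\mathrm{Q}<\varepsilon$ and does not change the rate here.

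The worry in your last paragraph is unnecessary. Since $\|\hat v\|_{\rH^1}^4\le\|\hat v\|_{\rH^1}^2\|\hat v\|_{\rH^2}^2$, \autoref{cor:elliptic estimate} already places the pressure-coupling contribution in the product form stated in \autoref{lem:differential inequality dissipation}.
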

\begin{proof}
	We consider the dissipation of the system
	\begin{equation*}
		D(t) := \int_{\Omegaatm} (|\nablaH \vair|^2 
		+ |p \partial_p \vair|^2) + \int_{\Omegaocn} |\nabla \vocn|^2 +
		\frac{p_s}{3} \int_{\T^2} |[v]|^3 .
	\end{equation*}
	By \autoref{prop:trapping} we may choose $t_0 \in [0,+\infty)$ such that
	\begin{equation*}
		D(t)
		\leq 
		\| \hvair \|_{\rH^1(\Omegaair)}^2 
		+ 
		\| \hvocn \|_{\rH^1(\Omegaocn)}^2
		+ \| [\hat{v}] \|_{\rL^3(\T^2)}^3
		< \epsilon \ll 1 \quad \text{ for all } t \geq t_0 .
	\end{equation*} 
	Now \autoref{lem:differential inequality dissipation} and an absorption argument yields
	\begin{equation*}
		\partial_t D(t)
		+ (\| \Delta^a \hvair \|_{\rL^2(\Omegaair)}^2
		+ \| \Delta \hvocn \|_{\rL^2(\Omegaocn)}^2)
		\leq C \cdot (\| \hvair \|_{\rL^2(\Omegaair)}^2
		+ \| \hvocn \|_{\rL^2(\Omegaocn)}^2) 
		\leq C \cdot (1+t)^{-2} ,
	\end{equation*}
	where the second inequality holds by \autoref{lem:algebraic convergence rate L^2}.
	On the other hand, \autoref{lem:elliptic estimate 2} yields
	the differential inequality
	\begin{equation}
		\partial_t D(t)
		+ c_0 \cdot D(t)^{\frac{4}{3}} \leq C \cdot (1+t)^{-2} . 
		\label{eq:D}
	\end{equation}
	We use the upper barrier function
	\begin{equation*}
		U(t) = a (1+t)^{-\frac{3}{2}}
	\end{equation*}
	where the coefficient $a > 0$ satisfies
	\begin{equation*} 
	 	c_0 \cdot a^{\frac{4}{3}}-\tfrac{3}{2} a \geq C \qquad \text{ and } \qquad a (1+t_0)^{-\frac{3}{2}} \geq D(t_0) .
	\end{equation*}
	A direct calculation confirms 
	\begin{equation*}
		U'(t) + c_0 U(t)^{\frac{4}{3}}
		= -\tfrac{3}{2} a (1+t)^{-\frac{5}{2}}
		 + c_0 a^{\frac{4}{3}} (1+t)^{-2}
		\geq \bigl(-\tfrac{3}{2} a 
		+ c_0 a^{\frac{4}{3}} \bigr) (1+t)^{-2}
		\geq C \cdot (1+t)^{-2} ,
	\end{equation*}
	and the comparison principle of ordinary different equations implies
	\begin{equation*}
		D(t) 
		\leq \tilde{C} \cdot (1+t)^{-\frac{3}{2}} .
	\end{equation*}
	Moreover, we obtain from \autoref{prop:linftyl2} for $t \in [0,t_0]$ the uniform bound
	\begin{equation*}
		\| \nabla^a \hvair(t) \|_{\rL^2(\Omegaair)}^2
		+ 
		\| \nabla \hvocn(t) \|_{\rL^2(\Omegaocn)}^2
		+ p_s \cdot \| [\hat{v}(t)] \|_{\rL^3(\T^2)}^3
		\leq \sup_{t \in [0,t_0]} B(t)^{\frac{1}{2}} = C(t_0) .
	\end{equation*}
	Combining both estimates yields the claim.
\end{proof}

Note that the polynomial convergence rate $\mathcal{O}((1+t)^{-{\frac{3}{2}}})$ in \autoref{prop:algebraic convergence rate small} is not optimal compared to the $\mathcal{O}((1+t)^{-3})$ dictated by the coupling conditions. 
It is even worse then the convergence rate on the energy or $\rL^2$-level. This is due to the error from the advection terms and non-local terms. Indeed, the term of order $\mathcal{O}((1+t)^{-2})$ on the right-hand side of \eqref{eq:D} yields to the slower convergence rate. 
Let us point out how better error terms would yield better decays: an error of order $\mathcal{O}((1+t)^{-3})$ would imply a decay rate $\mathcal{O}((1+t)^{-2})$ which matches the $\rL^2$-rate; an error of order $\mathcal{O}((1+t)^{-4})$ would yield a decay rate $\mathcal{O}((1+t)^{-3})$ which matches the decay rate of the $\rL^2$-norms of the gradients in the toy example. At $\mathcal{O}((1+t)^{-4})$ it saturates: faster errors $\mathcal{O}((1+t)^{-s})$ with $s > 4$ do not yield to better decay. 

We bootstrap our previous argument to obtain the optimal decay rate.

\begin{prop}[Optimal polynomial convergence rate of the gradients]\label{prop:final algebraic convergence rate small}
	Let $(\hvair,\hvocn) \in \rC([0,
+\infty);\rH_\gamma)$ to \eqref{eq:CAOc} be the global strong solution with initial data $(\hvair_0,\hvocn_0)^\top \in \rH_\gamma$. 
 Then the dissipation satisfies 
	\begin{equation*}
		\| \nabla^a \hvair(t) \|_{\rL^2(\Omegaair)}^2
		+ 
		\| \nabla \hvocn(t) \|_{\rL^2(\Omegaocn)} ^2
		+ p_s \cdot \| [\hat{v}(t)] \|_{\rL^3(\T^2)}^{3}
		\leq C \cdot (1+t)^{-3} \qquad \text{ for all } t \geq 0  
	\end{equation*} 
	for a constant $C > 0$.
\end{prop}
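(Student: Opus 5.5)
We bootstrap the differential inequality for the dissipation $\mathrm{D}$ from the proof of \autoref{prop:algebraic convergence rate small}, now keeping the precise structure of the right-hand side. By \autoref{prop:trapping} we fix $t_0$ such that the solution stays in the trapping region for $t\geq t_0$, so that \autoref{cor:elliptic estimate}, \autoref{lem:elliptic estimate 2} and \autoref{lem:differential inequality dissipation} apply. In the last lemma, the factor $\|\hvair\|_{\rH^1}^2+\|\hvocn\|_{\rH^1}^2$ splits into an $\rL^2$ part and a gradient part. The gradient part is bounded by $C\mathrm{D}(t)$.

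Absorbing the term containing the Laplacians (the factor is $\le C\varepsilon<\tfrac12$) leaves
\begin{equation*}
	\partial_t \mathrm{D}(t) + c_0\, \mathrm{D}(t)^{\frac43}
	\leq C\bigl(\|\hvair\|_{\rL^2}^2+\|\hvocn\|_{\rL^2}^2\bigr)^2 + C\,\mathrm{D}(t)\bigl(\|\hvair\|_{\rL^2}^2+\|\hvocn\|_{\rL^2}^2\bigr),
\end{equation*}
which is the error structure $\|v\|_{\rL^2}^4+\|v\|_{\rL^2}^2\|\nabla v\|_{\rL^2}^2$ described in the introduction. If the term $\|v\|_{\rL^2}^2\|\Delta v\|^2$ from the pressure and coupling parts cannot be absorbed this way, we would instead use \autoref{cor:elliptic estimate} to bound every occurrence of the $\rH^2$-norm by $\|\Delta^a\hvair\|^2+\|\Delta\hvocn\|^2+\|v\|_{\rL^2}^2$. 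We would also recheck the coupling-pressure estimate $\|[\hat v]\|_{\rL^4}^4$ so that it is controlled by $\mathrm{D}\cdot\|v\|_{\rL^2}^2$ or by $\|v\|_{\rL^2}^4$ rather than by $\|v\|_{\rH^1}^4$. This is the step I expect to be the main obstacle: one must make sure that no error term is merely quadratic in $\mathrm{D}$ without an $\rL^2$ factor. If such a term appears, for instance $\mathrm{D}^2$, it is harmless, since $\mathrm{D}^2\le \varepsilon^{2/3}\mathrm{D}^{4/3}$ can be absorbed into $c_0\mathrm{D}^{4/3}$.

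With this structure we run the iteration. \autoref{lem:algebraic convergence rate L^2} gives $\|v\|_{\rL^2}^4\lesssim(1+t)^{-4}$. Suppose $\mathrm{D}(t)\lesssim(1+t)^{-\alpha}$ is already known, starting from $\alpha_0=\tfrac32$ by \autoref{prop:algebraic convergence rate small}. Then the right-hand side is $\lesssim (1+t)^{-\min\{4,\,2+\alpha\}}$. The barrier argument of \autoref{prop:algebraic convergence rate small} with $U(t)=a(1+t)^{-\beta}$ works whenever $\tfrac43\beta\le s$ and $\beta+1\le s$, where $s$ is the decay exponent of the error. Hence an error of order $(1+t)^{-s}$ yields the rate $\beta=\min\{\tfrac34 s,\,3\}$. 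For $s\le4$ the condition $\tfrac43\beta\le s$ is the binding one: at $\beta=\tfrac34 s$ the $U^{4/3}$ term and the error have the same power, and $c_0a^{4/3}$ dominates for large $a$. At $\beta=3$ both conditions become equalities, with $U'+c_0U^{4/3}=(c_0a^{4/3}-3a)(1+t)^{-4}$, which again dominates for $a$ large.

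This gives the recursion $\alpha_{k+1}=\min\{\tfrac34(2+\alpha_k),3\}$, so $\tfrac32\to\tfrac{21}{8}\to 3$ (since $\tfrac34\cdot\tfrac{37}{8}>3$). Thus after finitely many steps the error is $O((1+t)^{-4})$ and $\mathrm{D}(t)\le C(1+t)^{-3}$ for $t\ge t_0$. The bound on $[0,t_0]$ follows from \autoref{prop:linftyl2} as before. Finally, the dissipation with weight $\tfrac{p_s}{3}$ is equivalent to the one with weight $p_s$, so the claimed estimate follows.
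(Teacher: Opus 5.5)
Your proposal is correct and follows the paper's proof. Like the paper, you split the $\rH^1$ factor in the dissipation inequality to expose the error $\|v\|_{\rL^2}^4+\|v\|_{\rL^2}^2\|\nabla v\|_{\rL^2}^2$ and bootstrap the barrier argument $\tfrac32\to\tfrac{21}{8}\to 3$. One small slip: the barrier $a(1+t)^{-\beta}$ requires $\tfrac43\beta\le s$ and $\beta\le 3$ (so that $U'$ is dominated by $c_0U^{4/3}$), not $\beta+1\le s$, which in fact fails in your first step ($s=\tfrac72$, $\beta=\tfrac{21}{8}$); your rate formula $\beta=\min\{\tfrac34 s,3\}$ and the explicit barrier computations are nevertheless right.
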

\begin{proof}
	We bootstrap the argument from \autoref{prop:algebraic convergence rate small}.
	By \autoref{prop:trapping} it suffices to consider
	\begin{equation*}
		D(t)
		\leq 
		\| \hvair \|_{\rH^1(\Omegaair)}^2 
		+ 
		\| \hvocn \|_{\rH^1(\Omegaocn)}^2
		+ \| [\hat{v}] \|_{\rL^3(\T^2)}^3
		< \epsilon \ll 1 \quad \text{ for all } t \geq t_0 .
	\end{equation*} 
	Further, we recall from the proof of \autoref{lem:differential inequality dissipation} the estimates
	\begin{align*}
		\| \hvair \cdot \nablaH \hvair
		+ \hat{\omega} \partial_p \hvair \|_{\rL^2(\Omegaair)}^2 + \| \nablaH \Phi \|_{\rL^2(\T^2)}^2
		\leq 
		C \cdot 
		\| \hvair \|_{\rH^1(\Omegaair)}^2 \cdot
		\| \hvair \|_{\rH^2(\Omegaair)}^2,\\
		\| \hvocn \cdot \nablaH \hvocn
		+ \hat{w} \partial_z \hvocn \|_{\rL^2(\Omegaocn)}^2
		+ \| \nablaH \pi \|_{\rL^2(\T^2)}^2
		\leq  
		C \cdot 
		\| \hvocn \|_{\rH^1(\Omegaocn)}^2 \cdot
		\| \hvocn \|_{\rH^2(\Omegaocn)}^2.
	\end{align*}
	Since, the arguments for both parts are analogous we only show the oceanic part. 
	Using $\| \hvocn \|_{\rH^1(\Omegaocn)}<\varepsilon$ we obtain 
	\begin{equation*}
		\| \hvocn \|_{\rH^1(\Omegaocn)}^2 \cdot
		\| \hvocn \|_{\rH^2(\Omegaocn)}^2
		= 
		\varepsilon^2 \cdot
		\| D^2 \hvocn \|_{\rH^2(\Omegaocn)}^2
		+ 
		\| \nabla \hvocn \|_{\rL^2(\Omegaocn)}^2 \cdot
		\| \hvocn \|_{\rL^2(\Omegaocn)}^2 
		+ 
		\| \hvocn \|_{\rL^2(\Omegaocn)}^4 .
	\end{equation*}
	The first term on the right-hand side will be absorbed, for the second and the third terms 
	we know from \autoref{lem:algebraic convergence rate L^2} and \autoref{prop:algebraic convergence rate small} that
	\begin{equation}
		\| \nabla \hvocn \|_{\rL^2(\Omegaocn)}^2 \cdot
		\| \hvocn \|_{\rL^2(\Omegaocn)}^2 
		+ 
		\| \hvocn \|_{\rL^2(\Omegaocn)}^4
		\leq \frac{C}{(1+t)^{\frac{3}{2}+2}} +  \frac{C}{(1+t)^{4}}
		\leq  \frac{2C}{(1+t)^{\frac{7}{2}}} .
		\label{eq:bootstrap}
	\end{equation}
	Combining it with the analogous bound for the atmospheric part and using 
	a barrier argument with the upper barrier function
	\begin{equation*}
		U(t) = a(1+t)^{-\frac{21}{8}}
	\end{equation*}
	for a carefully chosen $a > 0$ implies as in the proof of \autoref{prop:algebraic convergence rate small} the decay rate of the dissipation
	\begin{equation*}
		\mathrm{D}(t) \leq C \cdot (1+t)^{\frac{21}{8}}
	\end{equation*}
	and in particular
	\begin{equation*}
		\| \nabla \hvocn(t) \|_{\rL^2(\Omegaocn)} 
		\leq C \cdot (1+t)^{\frac{21}{16}} .
	\end{equation*}
	Now, we run the same argument again. 
	Note that $\frac{21}{8} > 2$ and hence in \eqref{eq:bootstrap} the second term dominates and therefore the bootstrap argument terminates in this step already. One obtains that the error is $\mathcal{O}((1+t)^{-4})$. A barrier argument as in the proof of \autoref{prop:algebraic convergence rate small} yields the desired decay.  
\end{proof}

We conclude the convergence rates for the vertical velocities, the pressure and the geopotential.

\begin{cor}[Optimal polynomial convergence rate of the gradients]\label{cor:w,pi algebraic convergence}
	Let $(\hvair,\hvocn) \in \rC([0,
	+\infty);\rH_\gamma)$ to \eqref{eq:CAOc} be the global strong solution with initial data $(\hvair_0,\hvocn_0)^\top \in \rH_\gamma$. 
	Then the vertical velocities, the pressure and the geopotential decay as follows  
	\begin{align*}
	\| \wair(t) \|_{\rL^2(\Omegaair)}
	+
	\| \wocn(t) \|_{\rL^2(\Omegaocn)}
	\leq C \cdot (1+t)^{-{\frac{3}{2}}} , \qquad \text{ for all } t \geq 0   \\
	\| \nablaH \Phi(t) \|_{\rL^2(\Omegaair)}
	+
	\| \nablaH \pi(t) \|_{\rL^2(\Omegaocn)}
	\leq C \cdot (1+t)^{-2},  \qquad \text{ for all } t \geq 0  
	\end{align*} 
	for a constant $C > 0$.
\end{cor}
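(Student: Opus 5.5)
The plan is to derive both bounds from the decay of the horizontal velocities in \autoref{prop:final algebraic convergence rate small} and \autoref{lem:algebraic convergence rate L^2}, using the diagnostic relations \eqref{eq:wv} and \eqref{eq:pressure}.

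\textbf{Vertical velocities.} By \eqref{eq:wv} we have $\hat{w}(x,y,z) = -\int_{-h}^z \divH \hvocn(x,y,\xi)\d\xi$, and analogously for $\wair$ with $\xi$ running over $(p_a,p)$. Cauchy--Schwarz in the vertical variable, followed by integration over the layer, gives
\begin{equation*}
\| \wocn \|_{\rL^2(\Omegaocn)} \leq h \cdot \| \divH \hvocn \|_{\rL^2(\Omegaocn)} \leq C \cdot \| \nabla \hvocn \|_{\rL^2(\Omegaocn)} .
\end{equation*}
The atmospheric estimate is the same, since $\nablaH$ is part of $\nabla^a$. The constant shift $c_*$ does not change $w$, because $\divH c_* = 0$. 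By \autoref{prop:final algebraic convergence rate small}, $\| \nabla \hvocn \|_{\rL^2}^2 \leq C(1+t)^{-3}$, which yields the rate $(1+t)^{-3/2}$.

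\textbf{Pressure and geopotential.} Only $\pi$ is treated; the argument for $\Phi$ is analogous. As in the proof of \autoref{lem:differential inequality dissipation}, split $\nablaH \pi = \nablaH \pi_{\mathrm{NL}} + \nablaH \pi_{\mathrm{CL}}$.

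\emph{Coupling part.} Calder\'on--Zygmund gives
\begin{equation*}
\| \nablaH \pi_{\mathrm{CL}} \|_{\rL^2(\T^2)} \leq C \| [\hat v]\|_{\rL^4(\T^2)}^2 \leq C \| [\hat v] \|_{\rH^{1/2}(\T^2)}^2 .
\end{equation*}
By the trace theorem, $\| [\hat v] \|_{\rH^{1/2}(\T^2)}^2 \leq C(\|\hvair\|_{\rH^1}^2+\|\hvocn\|_{\rH^1}^2)$. The $\rL^2$ parts are $O((1+t)^{-2})$ by \autoref{lem:algebraic convergence rate L^2}, and the gradient parts are $O((1+t)^{-3})$. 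Hence this contribution is $O((1+t)^{-2})$.

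\emph{Nonlinear part.} Calder\'on--Zygmund and the bound on the vertical average give
\begin{equation*}
\|\nablaH\pi_{\mathrm{NL}}\|_{\rL^2} \leq C\,\|\hvocn\cdot\nablaH\hvocn+\hat w\,\partial_z\hvocn\|_{\rL^2}.
\end{equation*}
We need this to be $O((1+t)^{-2})$ pointwise in $t$. Bilinear estimates such as \cite[Lemma 5.1]{HK:16} bound the term by $\|\hvocn\|_{\rH^1}\|\hvocn\|_{\rH^2}$, so a pointwise bound on $\|\hvocn\|_{\rH^2}$ is required.

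\textbf{Main obstacle: pointwise $\rH^2$ control.} The estimates so far give $\rH^2$ only in $\rL^2_t$. I would close the gap with one of two routes.

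\emph{First route: avoid $\rH^2$.} Use the lower-order bound $\|\hvocn\cdot\nablaH\hvocn+\hat w\partial_z\hvocn\|_{\rL^1}\lesssim \|\hvocn\|_{\rH^1}^2$. Then $\overline{\,\cdot\,}$ maps into $\rL^1(\T^2)$, and $\nablaH\DeltaH^{-1}\divH$ controls the resulting $\rH^{-1}$-type norm. This gives $O((1+t)^{-2})$ in a negative norm, which is enough only if the paper's norm is read weakly.

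\emph{Second route: prove an $\rH^2$ bound.} Obtain $\|\hvocn(t)\|_{\rH^2}\leq C$ for $t\geq t_0+1$ by a parabolic smoothing step. Test \eqref{eq:CAOc} with $\partial_t$ of the equation, in a time-weighted way, near the equilibrium. Combined with \autoref{cor:elliptic estimate}, this should give $\|\hvocn\|_{\rH^2}^2 \lesssim \|\partial_t \hvocn\|_{\rL^2}^2 + \text{l.o.t.}$, and these terms decay.

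In either case the nonlinear part is bounded by $(1+t)^{-1}\cdot(1+t)^{-3/2}\cdot O(1)$, which is stronger than $(1+t)^{-2}$. The coupling term is therefore the dominant one and fixes the rate $(1+t)^{-2}$. Over $[0,t_0]$ all quantities are bounded by \autoref{prop:linftyl2}, which completes the bound for all $t\ge0$.
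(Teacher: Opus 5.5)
You handle the vertical velocities and the coupling part $\pi_{\mathrm{CL}}$ the same way the paper does, and correctly. You even have the right power $\|[\hat v]\|_{\rL^4(\T^2)}^2$, where the paper's display writes a fourth power.

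The gap is the nonlinear part $\pi_{\mathrm{NL}}$. The paper states $\|\nablaH\pi_{\mathrm{NL}}\|\le C(1+t)^{-\frac52}$ in one line, from \autoref{lem:algebraic convergence rate L^2}, \autoref{prop:final algebraic convergence rate small} and Calder\'on--Zygmund. You rightly notice that this needs pointwise-in-time control above $\rH^1$, but neither of your routes supplies it.

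\begin{itemize}
\item Route~1, as you say yourself, only controls $\nablaH\pi_{\mathrm{NL}}$ in a negative Sobolev norm, which is not the statement.
\item In Route~2 the uniform $\rH^2$ bound is only sketched. Even granting it, the rate does not follow. The estimate you quote gives $\|\hvocn\|_{\rH^1}\|\hvocn\|_{\rH^2}$. Since $\|\hvocn\|_{\rH^1}\ge\|\hvocn\|_{\rL^2}$ is only known to be $O((1+t)^{-1})$, a bound $\|\hvocn\|_{\rH^2}=O(1)$ yields just $O((1+t)^{-1})$, not $(1+t)^{-2}$. The product $(1+t)^{-1}\cdot(1+t)^{-\frac32}\cdot O(1)$ in your last paragraph does not come from that estimate.
\item \autoref{prop:linftyl2} gives only $\rH^1$ bounds pointwise in time. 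So it does not cover $\nablaH\pi_{\mathrm{NL}}$ on $[0,t_0]$ either, contrary to your closing remark.
\end{itemize}

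The missing idea for large times is that only the vertical average enters. Since $\hat w$ vanishes at $z=-h$ and $z=0$, and $\dz\hat w=-\divH\hvocn$, you have $\hvocn\cdot\nablaH\hvocn+\hat w\,\dz\hvocn=\divH(\hvocn\otimes\hvocn)+\dz(\hat w\,\hvocn)$. The $\hat w$-term therefore drops out of the average, and
\begin{equation*}
\|\nablaH\pi_{\mathrm{NL}}\|_{\rL^2(\T^2)}\le C\,\bigl\|\,|\hvocn|\,|\nablaH\hvocn|\,\bigr\|_{\rL^2(\Omegaocn)} .
\end{equation*}
Write $\hvocn=m+(\hvocn-m)$ with $m$ the mean over $\Omegaocn$, so that $|m|\le C\|\hvocn\|_{\rL^2}$. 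Agmon's inequality applied to the mean-free part then gives the bound $C(1+t)^{-1}\|\nabla\hvocn\|_{\rL^2}+C\|\nabla\hvocn\|_{\rL^2}^{3/2}\|\hvocn\|_{\rH^2}^{1/2}=O((1+t)^{-9/4})$. This holds provided $\sup_{t\ge t_0+1}\|\hvocn(t)\|_{\rH^2}<\infty$, and that uniform $\rH^2$ bound must still be proved. Your $\partial_t$-energy estimate combined with \autoref{cor:elliptic estimate} is a sensible route. The boundary term it produces, $p_s\int_{\T^2}\partial_t(|[\hat v]|[\hat v])\cdot\partial_t[\hat v]$, is nonnegative because $x\mapsto|x|x$ is monotone. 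The atmospheric part is identical, with $\hat\omega$ and $\partial_p$ in place of $\hat w$ and $\dz$.
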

\begin{proof}
	The decay rate for the vertical velocities follows immediately from \eqref{eq:wv} and \autoref{prop:final algebraic convergence rate small}.
	The estimates for the geopotential and the oceanic pressure are analogous. We therefore only establish the latter one. We recall from \eqref{eq:pressure} that
	\begin{equation*}
		\nablaH \pi 
		= (\nablaH (-\DeltaH)^{-1} \divH) ( \overline{\vocn \cdot \nablaH \vocn + w^{\ocn} \partial_z \vocn } ) + p_s (\nablaH (-\DeltaH)^{-1} \divH) ( [v] | [v] | )
		=: \nablaH \pi_{\mathrm{NL}} + \nablaH \pi_{\mathrm{CL}} .
	\end{equation*}
	Using \autoref{lem:algebraic convergence rate L^2} and \autoref{prop:final algebraic convergence rate small} and part 1 we obtain by Calderon-Zygmund estimates 
	\begin{equation*}
		\| \nablaH \pi_{\mathrm{NL}} \|_{\rL^2(\Omegaocn)}
		\leq C \cdot (1+t)^{-\frac{5}{2}} .
	\end{equation*}
	For the coupling term we see using Calderon-Zygmund estimates and the embedding
	\begin{equation*}
		\rH^{\frac{1}{2}}(\T^2) \hookrightarrow \rL^4(\T^2)
	\end{equation*} 
	and the trace inequality, that
	\begin{equation*}
		\| \nablaH \pi_{\mathrm{CL}} \|_{\rL^2(\T^2)}
		\leq 
		C \cdot \| [\hat{v}] \|_{\rL^4(\T^2)}^4 
		\leq C \cdot \| [\hat{v}] \|_{\rH^{\frac{1}{2}}(\T^2)}^2  
		\leq C \cdot ( \| \hvair \|_{\rH^1(\Omegaair)}^2 + \| \hvocn \|_{\rH^1(\Omegaocn)}^2 ) 
		\leq C \cdot (1+t)^{-2}. \qedhere 
	\end{equation*}
\end{proof}

The fact that the bootstrap mechanism saturates in finitely many steps strongly suggest that the decay rate from \autoref{prop:final algebraic convergence rate small} is optimal. Moreover, the polynomial rates in \autoref{lem:algebraic convergence rate L^2} and \autoref{prop:final algebraic convergence rate small} coincides with those of the 1D toy model from the introduction. Nevertheless, one may hope for a hidden mechanism which imply faster decay rates than the 1D toy model. 
This cannot happen. 
Indeed, for certain initial data the CAO-system reduces to (a slightly modified version of) the 1D toy model from the introduction. This confirms optimality of the proven decay rates. 

\begin{exa}[Modified toy model]\label{exa:algebraic decay}
	We consider initial data $\vair_0 (x,y,p) = a_0^{\air}(p)$ and $\vocn_0(x,y,z) = a_0^{\ocn}(z)$ for functions $a_0^{\air} \in \rH^1(p_s,p_a)$ and $a_0^{\ocn} \in \rH^1(-h,0)$. Using the uniqueness of the solutions of \eqref{eq:CAO} direct calculations  imply
	\begin{equation*}
		(\vair(t,x,y,p),\vocn(t,x,y,z)) = (a^{\air}(p),a^{\ocn}(z))
	\end{equation*}
	where $a^{\air}$ and $a^{\ocn}$ solve the 1D heat equations
	\begin{equation*}
		\left\{
		\begin{aligned} 
			\partial_t a^\air - \partial_p (p^2 \partial_p a^{\air}) &= 0 
			&\text{ on } (p_s,p_a), \\
			\partial_t a^\ocn -\partial_z^2 a^{\ocn} &= 0 
			&\text{ on } (-h,0) ,
		\end{aligned} 
		\right.
	\end{equation*}
	and the nonlinear coupling conditions
	\begin{align*}
		\begin{aligned} 
			(\partial_p a^{\air})(p_a) = 0 \quad &\text{ and } \quad (\partial_p a^{\air})(p_s) = -p_s^{-1} \cdot |a^{\air}(p_s) -a^{\ocn}(0)|\cdot(a^{\air}(p_s) -a^{\ocn}(0) ) , \\ 
			(\partial_z a^\ocn)(0) = 0 \quad &\text{ and } \quad (\partial_z \vocn)(0) = p_s \cdot |a^{\air}(p_s) -a^{\ocn}(0)|\cdot(a^{\air}(p_s) -a^{\ocn}(0) ) .
		\end{aligned} 
	\end{align*}
\end{exa}

Note that the constant in \autoref{prop:final algebraic convergence rate small} highly depends on the initial data $(\vair_0,\vocn_0)$. 
As the following example shows that there are certain initial data where one obtains exponential decay. This happens because of the absence of the non-linear coupling conditions.

\begin{exa}[Exponential decay for certain initial data]\label{exa:exponential decay}
	We consider initial data $\vair_0 (x,y,p) = f_0(x,y)$ and $\vocn_0(x,y,z) = f_0(x,y)$ for a solenoid vector-field $f \in \rH^1_{\sigma}(\T^2)$. Using the uniqueness of the solutions of \eqref{eq:CAO} we see that 
	\begin{equation*}
		(\vair(t,x,y,p),\vocn(t,x,y,z)) = (f(x,y),f(x,y))
	\end{equation*}
	and $f$ solves the incompressible 2D Navier-Stokes equations
	\begin{equation*}
		\left\{
		\begin{aligned} 
		\partial_t f - \Delta f + f \cdot \nablaH f + \nabla \pi &= 0 
		&\text{ on } \T^2, \\
		\divH f &= 0 &\text{ on } \T^2 .
		\end{aligned} 
		\right.
	\end{equation*}
	It follows (from e.g. a similar argument as in \cite{HK:16,Bin:25} in conjunction with the generalized principle of linearized stability \cite{PSZ:09,PS:16} and the fact that $\sigma(\Delta_{\T^2}) = \{ -|k|^2 \colon k \in \N_0^2 \}$) that
	\begin{equation*}
		\| f(t) - c_* \|_{\rH^1(\T^2)} \leq C e^{-t} \to 0 
	\end{equation*}
	and therefore
	\begin{equation*}
		\| \vair(t) - c_* \|_{\rH^1(\Omegaair)}
		+ 
		\| \vocn(t) - c_* \|_{\rH^1(\Omegaocn)}
		\leq C e^{-t}, 
	\end{equation*}
	where $c_*$ is the common mean 
	\begin{equation*}
		c_* = \frac{1}{|\T^2|} \int_{\T^2} b = \frac{1}{|\Omegaair|} \int_{\Omegaair} \vair
		= \frac{1}{|\Omegaocn|} \int_{\Omegaocn} \vair .
	\end{equation*}
\end{exa}

\bigskip 

\subsection*{Acknowledgement}

The author wants to thank Karoline Disser and Mathias Wilke for helpful discussions about the Lojasiewicz-Simon inequality. 

\subsection*{Funding}

The author would like to thank DFG for support through the project "Globale Existenz und Singularitäten geophysikalischer Flüsse", project no. 538212014.

\bigskip


\end{document}